\newcommand{\C}{\mathbb{C}}
\newcommand{\Q}{\mathbb{Q}}
\newcommand{\Z}{\mathbb{Z}}
\newcommand{\op}{\operatorname}
\newcommand{\dom}{\mathcal{P}^+}
\newtheorem{theorem}{Theorem}[section]
\newtheorem{remark}[theorem]{Remark}
\newtheorem{conjecture}[theorem]{Conjecture}
\newtheorem{corollary}[theorem]{Corollary}
\newtheorem{proposition}[theorem]{Proposition}
\newtheorem{lemma}[theorem]{Lemma}
\newtheorem{clm}{Claim}
\newtheorem{definition}[theorem]{Definition}
\newtheorem{definition/lemma}[theorem]{Definition/Lemma}
\DeclareMathOperator{\End}{End}
\title[Components of $V(\rho) \otimes V(\rho)$]{ Components of $V(\rho) \otimes V(\rho)$ and dominant weight polyhedra for affine Kac--Moody Lie algebras}
\author{Sam Jeralds}
\address{School of Mathematics and Physics, University of Queensland, St. Lucia, QLD 4072, AUS}
\email{s.jeralds@uq.edu.au}
\author{Shrawan Kumar}
\address{Department of Mathematics, University of North Carolina, Chapel Hill, NC, 27599-3250, USA}
\email{shrawan@math.unc.edu}
\begin{document}

\begin{abstract} 

 Kostant asked the following question: Let $\mf[g]$ be a simple Lie algebra over the complex numbers. Let $\lambda$ be a dominant integral weight. Then, $V(\lambda)$ is a component of $V(\rho)\otimes V(\rho)$  if and only if $\lambda \leq 2 \rho$  under the usual Bruhat-Chevalley order on the set of weights. In an earlier work with R. Chirivi and A. Maffei the second author gave an  affirmative answer to this question up to a saturation factor. The aim of the current work is to extend this result to untwisted affine Kac-Moody Lie algebra $\mf[g]$ associated to any simple Lie algebra $\mathring{\mf[g]}$ (up to a saturation factor). In fact, we prove the result  for affine $sl_n$ without any saturation factor. Our proof requires some additional techniques including the Goddard-Kent-Olive construction and study of the characteristic cone  of non-compact polyhedra. 

\end{abstract}

\maketitle

\section{Introduction} \label{Intro}
Let $\mf[g]$ be a symmetrizable Kac--Moody Lie algebra over $\C$. In particular, we could let $\mf[g]$ be a (finite-dimensional) semisimple Lie algebra, or an untwisted affine Kac--Moody Lie algebra. Given two  dominant integral  weights $\lambda$ and $\mu$ of $\mf[g]$, the tensor decomposition problem seeks to understand the irreducible components of $V(\lambda) \otimes V(\mu)$, where $V(\lambda)$ is the irreducible representation of $\mf[g]$ with  highest weight $\lambda$, and similarly for $\mu$. This is a classic problem, with approaches and applications in representation theory, geometry, conformal field theory, and algebraic combinatorics, just to name a few. 

In his study of the exterior algebra $\bigwedge \mf[g]$ of  semisimple Lie algebras, Kostant \cite{Kos} found that as $\mf[g]$-representations, $\bigwedge \mf[g]$ decomposes as 
$$
\bigwedge \mf[g] \cong 2^l \left( V(\rho) \otimes V(\rho) \right),
$$
where $l$ is the rank of $\mf[g]$ and $\rho$ is half the sum of positive roots. Because of the role $\bigwedge \mf[g]$ plays in the computation of Lie algebra homology, and also in the structure of the Clifford algebra $\mc[C](\mf[g])$ as was Kostant's motivation, one would like to understand the decomposition of the tensor product $V(\rho) \otimes V(\rho)$ into irreducible components. In this direction, Kostant made the following conjecture, first recorded in the work of Berenstein--Zelevinsky, where they also gave a proof of this conjecture when $\mf[g]=sl_{n+1}$ \cite{BZ}.

\begin{conjecture} \label{Kostant} Let $\mf[g]$ be a semisimple Lie algebra over $\C$. Let $\lambda \in \dom$ be a dominant integral weight such that $\lambda \leq 2\rho$ in dominance order. Then $V(\lambda) \subset V(\rho) \otimes V(\rho)$.
\end{conjecture}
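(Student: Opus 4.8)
The plan is to reduce the statement to the non-vanishing of the tensor-product multiplicity $m^\lambda_{\rho,\rho} := \dim \operatorname{Hom}_{\mathfrak{g}}\bigl(V(\lambda),\, V(\rho)\otimes V(\rho)\bigr)$, and then to attack this through the geometry of the saturated tensor cone. The reverse implication is immediate: if $V(\lambda)$ is a component of $V(\rho)\otimes V(\rho)$, then $\lambda$ is a weight of $V(\rho)\otimes V(\rho)$, whose highest weight is $2\rho$, so $\lambda\le 2\rho$. Thus the entire content lies in the forward direction, namely producing a component $V(\lambda)$ for every dominant $\lambda\le 2\rho$; equivalently, in showing $m^\lambda_{\rho,\rho}>0$.

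For the forward direction I would work with the saturated tensor semigroup
\[
\Gamma = \bigl\{ (\lambda_1,\lambda_2,\mu)\in(\dom)^3 : m^{N\mu}_{N\lambda_1,N\lambda_2}>0 \text{ for some } N\ge 1 \bigr\},
\]
which is the set of integral points of a rational polyhedral cone cut out by an explicit finite list of linear inequalities (the Belkale--Kumar/Ressayre inequalities, indexed by cohomological data of the partial flag varieties $G/P$). The first main step is to verify that the triple $(\rho,\rho,\lambda)$ satisfies all of these inequalities as soon as $\lambda$ is dominant with $2\rho-\lambda\in Q^+$; here the rigidity of $2\rho$ — in particular the fact that $\langle\rho,\alpha_i^\vee\rangle=1$ for every simple root and the self-duality $-w_0\rho=\rho$ — should make the required estimates tractable, with the hypothesis $\lambda\le 2\rho$ entering exactly through the bound on $2\rho-\lambda$. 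Membership $(\rho,\rho,\lambda)\in\Gamma$ then yields $m^{N\lambda}_{N\rho,N\rho}>0$ for some $N$, that is, the conjecture up to a saturation factor.

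The second, and harder, step is to remove the saturation factor, passing from $m^{N\lambda}_{N\rho,N\rho}>0$ to $m^{\lambda}_{\rho,\rho}>0$. In type $A$ this is clean: the Knutson--Tao saturation theorem forces $N=1$, so for $\mathfrak{g}=\mathfrak{sl}_n$ the argument closes and recovers the Berenstein--Zelevinsky result, where one may alternatively argue directly with Littlewood--Richardson coefficients exploiting the self-duality of the staircase $\rho$. Outside type $A$, saturation can genuinely fail for general triples, so the crux is to exploit the special geometry of $(\rho,\rho,\lambda)$ directly — for instance by constructing an explicit highest-weight vector of weight $\lambda$ in $V(\rho)\otimes V(\rho)$ at level one (not after scaling by $N$), perhaps by interpolating between the guaranteed PRV components $V(\overline{\rho+w\rho})$ — rather than relying on general saturation. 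I expect this removal of the saturation factor to be the main obstacle, and indeed the reason the clean statement is known unconditionally only in type $A$.
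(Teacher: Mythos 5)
The statement you are attempting is recorded in the paper as an \emph{open conjecture} (Conjecture \ref{Kostant}): the paper gives no proof of it, and explicitly notes that it is known only in type $A_n$ (Berenstein--Zelevinsky \cite{BZ}) and the exceptional types. What is actually proved (by Chirivi--Kumar--Maffei \cite{CKM}, quoted as Theorem \ref{finite}) is the strictly weaker ``up to saturation'' statement: $V(d\lambda)\subset V(d\rho)\otimes V(d\rho)$ for any saturation factor $d$ of $\mathfrak{g}$. Your proposal reproduces exactly the architecture of that weaker result --- show $(\rho,\rho,\lambda)$ lies in the saturated tensor cone via the Berenstein--Sjamaar type inequalities, then try to remove the scaling --- so as written it cannot establish the conjecture itself.

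There are two concrete gaps. First, the cone-membership step is only asserted, not carried out: verifying that every dominant $\lambda\le 2\rho$ satisfies all the inequalities is the substantive content of the proof in \cite{CKM}, and it requires real input --- in particular a description of the vertices of the dominant weight polytope of $2\rho$ as the points $\rho+w_0^J(\rho)$ and the use of PRV components $V(\overline{\rho+w\rho})$; saying the estimates ``should be tractable'' because $\langle\rho,\alpha_i^\vee\rangle=1$ is not an argument. Second, and decisively, removing the saturation factor outside type $A$ is not an unfinished technicality but precisely the open problem: Knutson--Tao closes the argument for $\mathfrak{sl}_n$, where $d=1$ is a saturation factor, but for the non-simply-laced types the minimal saturation factor is strictly larger than $1$, and no construction of a highest-weight vector of weight $\lambda$ in $V(\rho)\otimes V(\rho)$ itself (by ``interpolating between PRV components'' or otherwise) is known. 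Your own text concedes this step is the main obstacle. So the proposal, if completed, would recover the type-$A$ case and the up-to-saturation theorem --- territory already covered by \cite{BZ} and \cite{CKM} --- while leaving the conjecture exactly where it stands.
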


Of course, the opposite implication clearly holds. Later, Chriv\`i--Kumar--Maffei \cite{CKM} proved a weaker version of Kostant's conjecture, which recovers Berenstein--Zelevinsky's result for $sl_{n+1}$, by showing that the expected components appear in the tensor product decomposition ``up to saturation," in the language of the saturated tensor cone; see Section \ref{Tensor} for the precise definitions. 

\begin{theorem}[\cite{CKM}*{Theorem 3}] \label{finite} Let $\mf[g]$ be a (finite-dimensional) semisimple Lie algebra over $\C$. Let $\lambda \in \dom$ be a dominant  integral weight such that $\lambda \leq 2\rho$ in dominance order. Then for any saturation factor $d \geq 1$ of $\mf[g]$, $V(d\lambda) \subset V(d\rho) \otimes V(d\rho)$. In particular, for $\mf[g]=sl_n$, $V(\lambda) \subset V(\rho) \otimes V(\rho)$.
\end{theorem}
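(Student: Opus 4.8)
The plan is to reduce the statement to a membership question for the saturated tensor cone and then settle it by the eigenvalue/moment-map description of that cone. First I would record the easy direction: every irreducible component $V(\nu)$ of $V(\rho)\otimes V(\rho)$ has highest weight $\nu \leq 2\rho$, since $2\rho$ is the highest weight of the tensor product and all of its weights lie below it. Thus only the forward implication needs proof. Writing $V(\nu)\subset V(\rho)\otimes V(\rho)$ as $\bigl(V(\nu^*)\otimes V(\rho)\otimes V(\rho)\bigr)^{G} \neq 0$, the target becomes the assertion that $(\rho,\rho,\nu^*)$ lies in the saturated tensor semigroup $\Gamma_s$. I would first observe that the hypothesis $\nu\leq 2\rho$ forces $2\rho-\nu\in Q$, the root lattice, and since $2\rho\in Q$ this gives $\nu,\nu^*\in Q$; hence the integrality condition $\rho+\rho+\nu^*\in Q$ needed to pass from the saturated cone to an honest component (after scaling by a saturation factor $d$) holds automatically. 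So it suffices to prove the cone membership $(\rho,\rho,\nu^*)\in\Gamma_s$ for all dominant $\nu\leq 2\rho$; applying $d$ then yields $V(d\nu)\subset V(d\rho)\otimes V(d\rho)$, and in type $A$ the Knutson--Tao saturation theorem ($d=1$) removes the factor, giving the ``in particular'' clause.

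For the cone membership I would use the geometric description of $\Gamma_s$ (Klyachko, Berenstein--Sjamaar, Kapovich--Leeb--Millson, Belkale--Kumar): $(\rho,\rho,\nu^*)\in\Gamma_s$ if and only if the coadjoint orbit $\mathcal{O}_\nu$ meets the Minkowski sum $\mathcal{O}_\rho+\mathcal{O}_\rho$ inside $\mathfrak{k}^*$; equivalently, $\nu$ lies in the moment polytope $\mathcal{P}:=(\mathcal{O}_\rho+\mathcal{O}_\rho)\cap \mathfrak{t}_+$, a convex polytope whose top vertex is $2\rho$. The goal is thereby reformulated as the polyhedral inclusion
\[
\{\nu\in\mathfrak{t}_+ : \nu\leq 2\rho\}\ \subseteq\ \mathcal{P},
\]
the reverse inclusion being the easy direction already noted. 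Here I would bring in the generalized Parthasarathy--Ranga Rao--Varadarajan components: since $\mathcal{O}_\rho+\mathcal{O}_\rho$ is $K$-stable and contains each point $\rho+w\rho$, it contains the full orbit of every folded weight $\overline{\rho+w\rho}$ for $w\in W$, so by convexity $\mathcal{P}$ contains the convex hull of all of these. The remaining task is to show that this hull, together with the facet structure of $\mathcal{P}$, already exhausts the dominance polytope $\{\nu\leq 2\rho\}$.

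Concretely, I would attempt to verify that each of the finitely many defining inequalities of $\mathcal{P}$ — a Belkale--Kumar inequality of the form $\langle \rho, w_1 x_i\rangle+\langle \rho, w_2 x_i\rangle+\langle \nu^*, w_3 x_i\rangle\le 0$, indexed by a fundamental coweight $x_i$ and a cohomological triple $(w_1,w_2,w_3)$ whose structure constant in the deformed product on $H^*(G/P_i)$ equals $1$ — is implied by $\nu\leq 2\rho$. This is the crux and the main obstacle: one must control \emph{all} such triples simultaneously and show that, for the special pair $(\rho,\rho)$, no facet of $\mathcal{P}$ cuts strictly inside the dominance polytope. I expect this to be exactly where passing to the real cone (hence the saturation factor) becomes essential outside type $A$, since the lattice-point version can genuinely fail; in type $A$ the hive/honeycomb model together with Knutson--Tao saturation make the inequalities tight on the nose and eliminate the factor.
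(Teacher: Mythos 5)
Your reduction is sound as far as it goes: the easy direction, the passage to the saturated cone, the observation that $\nu \le 2\rho$ forces $\rho+\rho-\nu$ into the root lattice (so that a saturation factor $d$ really does yield $V(d\nu)\subset V(d\rho)\otimes V(d\rho)$), and the appeal to Knutson--Tao to take $d=1$ in type $A$ all match the intended skeleton of the cited proof in \cite{CKM}. But the proposal stops exactly at the statement that needs proving. The containment of the dominance polytope $\{\nu\in\mathfrak{t}_+:\nu\le 2\rho\}$ in the cone section $\mathcal{P}=(\mathcal{O}_\rho+\mathcal{O}_\rho)\cap\mathfrak{t}_+$ \emph{is} the content of the theorem, and your treatment of it is ``I would attempt to verify that each \dots inequality \dots is implied by $\nu\le 2\rho$,'' which you yourself flag as ``the crux and the main obstacle.'' No argument is given for why none of the Belkale--Kumar/Berenstein--Sjamaar facets cuts into the dominance polytope; controlling all cohomological triples $(w_1,w_2,w_3)$ simultaneously for the pair $(\rho,\rho)$ is genuinely hard and is not carried out. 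As written, this is a plan with the central step missing, not a proof.

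The missing idea --- the one both \cite{CKM} and the present paper's affine generalization (Theorem \ref{maintheorem}) pivot on --- is a description of the \emph{vertices of the dominance polytope itself}, rather than the facets of $\mathcal{P}$: by a result of Kostant (reported in \cite{BZ}, and used in \cite{CKM}*{Proposition 9}; the affine analogue is Corollary \ref{2rhovert} here), the polytope $\mathfrak{t}_+\cap\operatorname{conv}\{w(2\rho):w\in W\}$ has vertex set $\{\rho+w_0^J(\rho)\}_{J}$, where $J$ runs over subsets of the simple roots and $w_0^J$ is the longest element of the parabolic subgroup $W_J$. Each such vertex is dominant and of PRV form $\rho+w\rho$, so $(\rho,\rho;\rho+w_0^J(\rho))$ lies in the tensor semigroup by the PRV theorem (Theorem \ref{PRV}); convexity of the saturated cone then puts $(\rho,\rho;\nu)$ in the cone for \emph{every} dominant $\nu\le 2\rho$, since such $\nu$ is a convex combination of the vertices. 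Your lattice observation and the saturation factor finish the argument, with no facet analysis of $\mathcal{P}$ needed at all. Note that you already assembled exactly these ingredients (the PRV points $\rho+w\rho$ and convexity of the cone); what your write-up lacks is the vertex description that makes them suffice, and without it the proposal leaves the theorem unproved.
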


The proof of Theorem \ref{finite} in loc. cit. makes use of a set of inequalities due to Berenstein--Sjamaar \cite{BS} which controls the possible irreducible components of $V(\lambda) \otimes V(\mu)$ up to saturation. The tensor decomposition problem is a particular example of a larger class of branching problems, which consider the restriction of representations of $\mf[g]$ to an embedded subalgebra $\mf[g]_1 \hookrightarrow \mf[g]$. In the tensor decomposition case, this corresponds to the diagonal embedding $\mf[g] \hookrightarrow \mf[g] \oplus \mf[g]$. A possible representation-theoretic generalization of Conjecture \ref{Kostant} is to consider the branching of the representation $V(\rho)$ of $\mf[g]$ to a subalgebra $\mf[g]_1$. This was studied by Nadimpalli--Pattanayak \cite{NP}, guided by the approach in \cite{CKM}, generalizing Theorem \ref{finite} via the corresponding inequalities of the related ``branching cone" (see \cite{Ku2} for a survey on the cones and inequalities for the tensor product and more general branching setting). 

We now propose a different representation-theoretic generalization; in particular, we consider the tensor decomposition problem for any untwisted affine Kac--Moody Lie algebra $\mf[g]$. In this setting, one can again try to determine the irreducible components of the tensor product $V(\rho) \otimes V(\rho)$, where $\rho$ now  is the sum of fundamental weights (see Section \ref{Sect2} for conventions). However, as the highest weight irreducible representations in this case are infinite-dimensional, and the tensor product $V(\rho) \otimes V(\rho)$ has infinitely-many components, the problem is more subtle. Nevertheless, we prove the following theorem, which is a verbatim generalization of Theorem \ref{finite} (cf. Theorems \ref{maintheorem} and \ref{maintheorem2} in the text).

\begin{theorem} \label{components} Let $\mf[g]$ be an untwisted affine Kac--Moody Lie algebra, $\lambda \in \dom$ a  dominant  integral
weight such that $\lambda \leq 2\rho$ in dominance order. Then,
\begin{enumerate}
\item for any saturation factor $d \geq 1$ of $\mf[g]$, we have $V(d\lambda) \subset V(d\rho) \otimes V(d\rho)$.
\item if $\mf[g]=A_n^{(1)}=\widehat{sl}_{n+1}$, then $V(\lambda) \subset V(\rho) \otimes V(\rho)$.
\end{enumerate}
\end{theorem}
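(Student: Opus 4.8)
The plan is to deduce the affine statement from the finite-dimensional Theorem~\ref{finite} for the underlying simple Lie algebra $\mathring{\mf[g]}$, the extra affine input being a recession-cone analysis that tames the unboundedness absent in the finite case. As in \cite{CKM}, I would first rephrase the desired containment as the statement that the triple $(\rho,\rho,\lambda)$ lies in the saturated tensor semigroup $\Gamma$ of $\mf[g]$, membership in (the cone generated by) which is governed by a Berenstein--Sjamaar type system of linear inequalities. Since $\rho$ has level $h^\vee$ while every $\alpha_i$ has level $0$, the hypothesis $\lambda\le 2\rho$ forces $\lambda$ to have level $2h^\vee$; writing $2\rho-\lambda\in\sum_i\Z_{\ge0}\,\alpha_i$ and using $\delta=\sum_i a_i\alpha_i$ with all $a_i>0$, one checks that $\lambda-\delta$ is again dominant and still $\le 2\rho$. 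Hence the dominant weight polyhedron $P:=\{\lambda\in\dom:\lambda\le 2\rho\}$ is \emph{non-compact}, and its characteristic (recession) cone is precisely the ray $\R_{\ge0}(-\delta)$. The goal becomes $P\subseteq\Gamma|_{(\rho,\rho)}$.

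By convexity this splits into two tasks: (a) that the finitely many generators of the bounded faces of $P$ --- the ``top'' of each $\delta$-string --- lie in $\Gamma$, and (b) that the single recession direction $-\delta$ lies in the recession cone of $\Gamma$. Packaging the infinitely many weights of $P$ into these two finite pieces is exactly the study of the characteristic cone of non-compact polyhedra that the affine setting demands. For task (b) I would use the Goddard--Kent--Olive coset construction for the diagonal $\mf[g]\hookrightarrow\mf[g]\oplus\mf[g]$: the components of $V(\rho)\otimes V(\rho)$ along a fixed $\delta$-string are organized by the branching (coset) modules, and the non-vanishing of their characters in all sufficiently deep $\delta$-degrees shows that once $V(\lambda)$ occurs so does every $V(\lambda-m\delta)$, i.e.\ $-\delta\in\operatorname{rec}(\Gamma)$.

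For task (a), the bounded part, I would compare the affine inequalities cutting out $\Gamma$ near $(\rho,\rho)$ with the finite Berenstein--Sjamaar inequalities for $\mathring{\mf[g]}$. Restricting to the finite part $\mathring{\mf[h]}^*$ (again via the GKO reduction of affine branching to finite branching data), the relevant inequalities match, up to level bookkeeping, those already verified for $(\mathring\rho,\mathring\rho,\mathring\lambda)$ in the proof of Theorem~\ref{finite}; invoking that theorem then certifies that every vertex of $P$ satisfies the finite-type inequalities. Together with (b) this yields $P\subseteq\Gamma|_{(\rho,\rho)}$ and hence part~(1): $V(d\lambda)\subset V(d\rho)\otimes V(d\rho)$ for the saturation factor $d$ of $\mf[g]$.

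Finally, for part~(2) with $\mf[g]=A_n^{(1)}=\widehat{sl}_{n+1}$, I would remove $d$ by invoking that type $A$ carries saturation factor $1$ --- the affine counterpart of the Knutson--Tao phenomenon used for $sl_n$ in Theorem~\ref{finite} --- so that membership in the saturated cone already gives the genuine containment. The step I expect to be the main obstacle is task (b): unlike in the finite case one cannot enumerate vertices of a polytope, and proving rigorously that the recession direction $-\delta$ is ``free'' --- equivalently that the coset branching multiplicities are eventually positive along every $\delta$-string, with the level and $\delta$-grading identifications handled correctly --- is where the genuine affine difficulty, and the need for the GKO construction, is concentrated.
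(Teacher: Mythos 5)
Your part (2) rests on a false fact, and this is the fatal gap. You propose to remove the saturation factor by ``invoking that type $A$ carries saturation factor $1$ --- the affine counterpart of the Knutson--Tao phenomenon.'' No such affine counterpart exists: by Ressayre's theorem (\cite{Res}, Theorem 3) the saturation factors of $A_n^{(1)}$ are exactly the integers $d \geq 2$, and the minimal one is $d=2$, not $1$. Indeed, the paper stresses that part (2) does \emph{not} follow from part (1) for precisely this reason, and this is why the GKO machinery is needed at all. The actual argument for $A_n^{(1)}$ is of a different nature: one first reduces, via Corollary \ref{2MrhoMaximals}, to $\delta$-maximal weights $\lambda \in \mc{P}^+_{max}(2\rho)$; by Proposition \ref{maxsupport} these satisfy $\op{supp}(2\rho-\lambda) \subsetneq I$, so a rotation automorphism of the affine Dynkin diagram (which fixes $\rho$, hence acts on $V(\rho)\otimes V(\rho)$) moves the support off the node $0$, i.e.\ arranges $2\rho-\lambda \in \mathring{\mc{Q}}^+$. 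One then lifts the \emph{finite-dimensional} Theorem \ref{finite} for $sl_{n+1}$ (this is where Knutson--Tao legitimately enters) to the affine algebra by checking that the finite highest weight vector $v_\lambda \in \mathring{V}(\rho)\otimes\mathring{V}(\rho)$ is killed by $e_0$, a degree argument using $\lambda(d)=\rho(d)=0$ (Lemma \ref{finitesupport}). Without some such lifting-plus-symmetry argument, your proposal proves only part (1) in type $A$, with $d=2$.

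For part (1), your architecture (vertices of the non-compact polyhedron plus the recession ray $\Q_+(-\delta)$, with GKO handling the $\delta$-direction) is genuinely the paper's, and Remark \ref{remark5.3} concedes that an inequality-based proof of (1) via Ressayre's affine analogue of Berenstein--Sjamaar would work. But two of your steps are not yet proofs. First, your task (a) asserts that the affine inequalities ``match, up to level bookkeeping'' the finite ones verified in Theorem \ref{finite}; this is unjustified --- the affine system involves affine Weyl group and affine Schubert data and does not restrict to the finite system. The paper sidesteps inequalities entirely: by Corollary \ref{2rhovert} the vertices are $\hat{b}_J(2\rho)=\rho+w_0^J(\rho)$, which are PRV weights, so Theorem \ref{PRV} (valid for all symmetrizable Kac--Moody algebras) puts $V(c'_J\hat{b}_J(2\rho))$ inside $V(c'_J\rho)\otimes V(c'_J\rho)$ at once, and additivity in the tensor semigroup handles convex combinations. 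Second, your task (b) claims that once $V(\nu)$ occurs, every $V(\nu-m\delta)$ occurs; this is exactly the point where the naive analogue of Lemma \ref{unbroken} fails. By Proposition \ref{GKOMaximal}, the $\delta$-string below a $\delta$-maximal component can skip $m=1$ when the $L_0^{GKO}$ eigenvalue vanishes, so one must prove the strict positivity
\begin{equation*}
\frac{2(M\rho\,|\,(M+2)\rho)}{(M+1)h^\vee}-\frac{(\lambda\,|\,\lambda+2\rho)}{(2M+1)h^\vee}>0
\end{equation*}
for all $\lambda \leq 2M\rho$ (Proposition \ref{L0weight}, which itself uses the vertex decomposition to write $\lambda = M\rho+\beta$ with $\beta \in \mc{P}(M\rho)$). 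Your appeal to multiplicities being ``eventually positive'' does not close the possible gap at $m=1$, and without it the containment fails for those $\lambda$ sitting one $\delta$ below a $\delta$-maximal weight.
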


We note that, unlike in the  semisimple case, Theorem \ref{components}$(2)$ does not follow from \ref{components}$(1)$, as the smallest saturation factor $d$ for $\widehat{sl}_{n+1}$ is $d=2$. While a similar approach via inequalities as in \cite{CKM} and \cite{NP} in the affine Kac--Moody setting would be effective to prove Theorem \ref{components}(1) (see Remark \ref{remark5.3}), such an approach would not address \ref{components}(2).

Instead, because of its utility in the $\mf[g]=\widehat{sl}_{n+1}$ case, our primary tool in proving Theorem \ref{components} is the action of the Virasoro algebra on $V(\rho) \otimes V(\rho)$ via the Goddard--Kent--Olive (or GKO) construction, which we recall in Section \ref{GKO}. The applicability of the GKO construction to the tensor decomposition problem allows us to restrict attention to certain ``maximal" components $V(\nu)$ of $V(\rho) \otimes V(\rho)$, whose existence demonstrates the appearance of the components $V(\nu-k\delta)$ for all $k \geq 0$. This technique originates with Kac--Wakimoto \cite{KW} and has seen applications in the study of the affine tensor semigroup (see Section \ref{semigroup} for references). More recently, the GKO action was crucially used in a similar fashion in our previous work on root components for affine Kac--Moody Lie algebras \cite{JK}.

To apply the GKO construction effectively, we must first understand the dominant weights $\lambda \leq 2\rho$. To do this, we introduce the notion of the \textit{dominant weight polyhedron} $D_\mu$ for a highest weight integrable  (irreducible) representation $V(\mu)$ of an affine Kac--Moody Lie algebra. In the finite case, these are compact polytopes and are completely describable by their vertices. This description was used in \cite{CKM}*{Proposition 9} to express those $\lambda \leq 2\rho$ in a particularly useful form. However, in the affine case, these polyhedra are no longer compact, so cannot be completely determined by their vertices. Taking this into account, we give in Proposition \ref{genvert} an a priori larger set of points that contains the vertices of $D_\mu$ for regular dominant $\mu$; in fact, in Corollary \ref{verticesequal} we show that these points are precisely the vertices when $\mu$ is regular dominant. By a general result for the structure of polyhedra, we then give an explicit decomposition of $D_\mu$ for $\mu$ regular dominant. With this result, we recover a decomposition result for $\lambda \leq 2\rho$ in Proposition \ref{CKMprop} analogous to the one in the semisimple case as in \cite{CKM}.

Finally, we apply the GKO construction to the specific problem of determining the components of $V(\rho) \otimes V(\rho)$ up to  a saturation factor $d$ as in Theorem \ref{components}. We further  propose an exact  analogue of Kostant's conjecture (without any saturation factor)  for affine Kac--Moody Lie algebras in Conjecture \ref{affineconjecture}.

\subsection*{Acknowledgements} The first author thanks M. Besson and J. Kiers for many helpful conversations throughout and Travis Scrimshaw for assistance with computational examples, and acknowledges the support of the Australian Research Council Discovery Project DP200102316. This work was completed while the second author was a member of the Institute for Advanced Study, Princeton, the hospitality of which is gratefully acknowledged.

\section{Notation and conventions for affine Kac--Moody Lie algebras} \label{Sect2}

We fix here briefly the key notation and conventions used throughout the paper. For a full treatment, we refer to \cite{Kac}*{Chapters 6, 7}, whose conventions we adopt.

By $\Z_+$ (resp. $\Q_+$) we mean the nonnegative integers (resp., rational numbers).

Throughout, we denote by $\mf[g]$ the untwisted affine Kac--Moody Lie algebra, associated to a finite-dimensional simple Lie algebra $\mathring{\mf[g]}$. We fix a choice of Cartan subalgebra and Borel subalgebra $\mf[h] \subset \mf[b] \subset \mf[g]$ based on a choice of Cartan subalgebra $\mathring{\mf[h]}$ and a Borel subalgebra $\mathring{\mf[b]}\supset \mathring{\mf[h]}$ of $\mathring{\mf[g]}$. Relative to this choice, we choose a set of simple roots $\Delta=\{\alpha_0, \alpha_1, \dots, \alpha_l\}=\{\alpha_i\}_{i \in I}$, where $l = \op{rank}{\mathring{\mf[g]}}$ and $I:=\{0,1,\dots, l\}$. We set $\Phi$ the set of roots of $\mf[g]$ and denote by $\Phi^+$ (resp., $\Phi^-$) the set of positive roots (resp., negative roots) for this choice. The set of roots $\Phi$ can further be partitioned into the set of real roots $\Phi_{Re}$ and imaginary roots $\Phi_{Im}$, with basic imaginary root $\delta:=\alpha_0+\theta$, where $\theta$ is the highest root of $\mathring{\mf[g]}$. We denote by $d$, $K \in \mf[h]$ the derivation and central elements of $\mf[g]$, respectively. 

The Weyl group of $\mf[g]$ is denoted by $W$, and has simple reflections $\{ s_0, s_1, \dots, s_l\}$ associated to the simple roots. 

We let 
$$\dom :=\{\lambda\in \mf[h]^*: \langle \lambda, \alpha_i^\vee\rangle \in \Z_+ \,\forall i\in I\}$$
 be the set of  dominant integral weights of $\mf[g]$. The fundamental weights $\Lambda_i \in \dom$ are defined uniquely by $\langle \Lambda_i, \alpha_j^\vee\rangle=\delta_{ij}$, the Kronecker delta, for all simple coroots $\alpha_j^\vee$ and $\langle \Lambda_i, d\rangle=0$. Then the corresponding Weyl vector $\rho \in \dom$ is given by $\rho=\sum_{i=0}^l \Lambda_i$ (so that $\langle \rho, d\rangle=0$). For any $\lambda \in \dom$, the value of $\langle \lambda, K\rangle$ is called the level of $\lambda$; throughout, we will restrict to those $\lambda \in \dom$ with \textbf{positive} level. 

The weight lattice $\mc[P]$ and root lattice $\mc[Q]$ are given by 
$$
\mc[P]:=\bigoplus_{i=0}^l \Z \Lambda_i \oplus \C \delta,\,\,\,
\mc[Q]:= \bigoplus_{i=0}^l \Z \alpha_i.
$$

Finally, the similarly defined objects for $\mathring{\mf[g]}$ will be denoted by a circle; i.e., the root lattice $\mathring{\mc[Q]}$ of $\mathring{\mf[g]}$, etc. We fix also a normalized invariant form $( \cdot | \cdot)$ on $\mf[g]$ such that $(\theta | \theta) =2$, where again $\theta \in \mathring{\Phi}^+$ is the highest root of the underlying semisimple Lie algebra $\mathring{\mf[g]}$; we will always use this choice of normalization.

\section{Irreducible highest weight representations and dominant weight polyhedra} \label{Sect3}

In this section, we recall some basic results on the structure of irreducible highest weight representations of $\mf[g]$. While many of the results mentioned hold more generally at the level of symmetrizable Kac--Moody algebras, we focus only on statements for the affine Lie algebra setting. We use \cite{Kac}*{Chapters 11, 12} as our primary sources.

\subsection{Weights of irreducible highest weight modules} 

Let $\lambda \in \dom$ be a dominant integral weight of $\mf[g]$. We denote by $V(\lambda)$ the corresponding irreducible, highest weight representation of $\mf[g]$ with highest weight $\lambda$. As an $\mf[h]$-module via restriction, $V(\lambda)$ has a decomposition into weight spaces 
$$
V(\lambda)=\bigoplus_{\mu \in \mc[P]} V(\lambda)_\mu,
$$
where $V(\lambda)_\mu:=\{v \in V(\lambda): h.v=\langle \mu, h\rangle v \ \forall h \in \mf[h]\}$. We set $\mc[P](\lambda)$ to be the set of weights of the representation $V(\lambda)$; that is, $\mc[P](\lambda)=\{\mu \in \mc[P]: V(\lambda)_\mu \neq 0\}$. This is a (typically infinite) $W$-invariant set of weights in $\mc[P]$.

For any two weights $\lambda, \mu \in \mc[P]$, we say that $\lambda \geq \mu$ if $\lambda-\mu \in \mc[Q]^+:= \bigoplus_{i=0}^l \Z_+ \alpha_i$. This defines a partial order on the weight lattice $\mc[P]$, the \textit{dominance order}. As $V(\lambda)$ is a highest weight representation, for any $\mu \in \mc[P](\lambda)$, necessarily $\mu \leq \lambda$. While it is not immediately clear from an algebraic perspective for which $\mu \leq \lambda$ we have $\mu \in \mc[P](\lambda)$, this question has a nice combinatorial answer. We record now the following standard proposition, taken from \cite{Kac}*{Proposition 12.5(a)(b)}, which gives a concise description of $\mc[P](\lambda)$.

\begin{proposition} \label{convhull}
For $\lambda\in \dom$, let $V(\lambda)$ be the irreducible highest weight representation with highest weight $\lambda$, and set of weights $\mc[P](\lambda)$. Then 
$$
\mc[P](\lambda)=W \cdot \{ \mu \in \dom: \mu \leq \lambda\} = (\lambda+\mc[Q]) \cap \op{conv}_\Q \{w\lambda: w \in W\},$$
where $\op{conv}_\Q$ denotes the rational convex hull.
\end{proposition}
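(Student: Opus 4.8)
The plan is to exploit that all three sets in the statement are stable under the Weyl group $W$ and then reduce to their dominant members. Indeed $\mc[P](\lambda)$ is $W$-invariant by construction, while $\op{conv}_\Q\{w\lambda:w\in W\}$ is visibly $W$-stable and $\lambda+\mc[Q]$ is $W$-stable because $w\lambda-\lambda\in\mc[Q]$ for every $w\in W$. Since every weight of an integrable highest weight module lies in the Tits cone and is therefore $W$-conjugate to a \emph{unique} dominant weight, it suffices to prove both equalities after intersecting with $\dom$. Concretely, I would reduce the whole proposition to the single chain of equivalences, valid for $\mu\in\dom$,
$$
\mu\in\mc[P](\lambda)\iff \mu\le\lambda\iff \mu\in(\lambda+\mc[Q])\cap\op{conv}_\Q\{w\lambda:w\in W\}.
$$

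For the first equivalence, the implication $\mu\in\mc[P](\lambda)\Rightarrow\mu\le\lambda$ is immediate from $V(\lambda)$ being a highest weight module, as already noted before the statement. The reverse implication is the crux, and I would prove it by induction on the height $\op{ht}(\lambda-\mu)$ of $\lambda-\mu\in\mc[Q]^+$. The engine is the $\mathfrak{sl}_2$-string property for the triple $(e_i,\alpha_i^\vee,f_i)$: if $\nu\in\mc[P](\lambda)$ and $\langle\nu,\alpha_i^\vee\rangle>0$, then $\nu-\alpha_i\in\mc[P](\lambda)$. When $\mu\ne\lambda$, the aim is to find a simple root $\alpha_i$ occurring in $\lambda-\mu$ for which $\nu:=\mu+\alpha_i\le\lambda$ is again a weight; then, since $\mu$ is dominant, $\langle\mu+\alpha_i,\alpha_i^\vee\rangle=\langle\mu,\alpha_i^\vee\rangle+2\ge 2>0$, so the string property lets us descend $\nu\mapsto\nu-\alpha_i=\mu$, placing $\mu\in\mc[P](\lambda)$.

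To select $\alpha_i$ I would use the invariant form. Writing $\lambda-\mu=\sum_i k_i\alpha_i$, one computes
$$
(\lambda+\mu\mid\lambda-\mu)=2(\mu\mid\lambda-\mu)+(\lambda-\mu\mid\lambda-\mu),
$$
and both terms are $\ge 0$ since $\mu$ is dominant and the form is positive semidefinite on $\mc[Q]$. Strict positivity of the left side would force $\langle\lambda+\mu,\alpha_i^\vee\rangle>0$ for some $i$ with $k_i>0$, giving the desired raising direction. Here lies the principal affine difficulty: unlike the finite-dimensional case the form is only \emph{semi}definite, its radical being $\C\delta$, so $(\lambda-\mu\mid\lambda-\mu)$ vanishes exactly when $\lambda-\mu=m\delta$ for some integer $m\ge 1$. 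Precisely in that degenerate case, however, $(\mu\mid\lambda-\mu)=m\,(\mu\mid\delta)=m\langle\mu,K\rangle$, which is positive by the standing assumption that $\lambda$, and hence $\mu$, has positive level; so strict positivity survives. Having fixed such an $i$, the weight $\nu=\mu+\alpha_i$ has $\op{ht}(\lambda-\nu)=\op{ht}(\lambda-\mu)-1$, and passing to its dominant representative $\nu^+$ (which only lowers the height distance to $\lambda$) the inductive hypothesis would give $\nu^+\in\mc[P](\lambda)$, whence $\nu\in W\nu^+\subseteq\mc[P](\lambda)$ and the descent closes.

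The second equivalence is purely convex-geometric and should follow from two standard facts I would isolate as lemmas: (i) a dominant weight $\mu\in\lambda+\mc[Q]$ lies in $\op{conv}_\Q\{w\lambda:w\in W\}$ if and only if $\mu\le\lambda$; and (ii) the dominant representative of any $W$-orbit is its maximum in dominance order. Together with the $W$-stability already noted, these identify $(\lambda+\mc[Q])\cap\op{conv}_\Q\{w\lambda\}$ with $W\cdot\{\mu\in\dom:\mu\le\lambda\}$. I expect the main obstacle to lie entirely in the first equivalence, at the two points where the affine, non-compact setting departs from the classical argument: the semidefiniteness of the form just discussed (rescued by the positivity of the level), and the need to know that the raised dominant representative $\nu^+$ still satisfies $\nu^+\le\lambda$ so that the induction genuinely closes. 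In the finite-dimensional case the latter is transparent because $\op{conv}(W\lambda)$ is a compact polytope; in the affine case the relevant polyhedra are unbounded, so I would instead extract $\nu^+\le\lambda$ from the convexity lemma (i), which is where the only nontrivial geometric input is actually consumed.
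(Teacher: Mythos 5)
The paper offers no proof of this proposition at all: it is quoted directly from \cite{Kac}*{Proposition 12.5(a)(b)}, so your argument has to stand on its own. It does not, and the problem is not just a missing justification: the inductive step of your first equivalence is false as stated. You fix $i$ with $k_i>0$ and $\langle\lambda+\mu,\alpha_i^\vee\rangle>0$, set $\nu=\mu+\alpha_i$, and want to apply the inductive hypothesis to the dominant conjugate $\nu^+$, which requires $\nu^+\le\lambda$. Your proposed repair, extracting $\nu^+\le\lambda$ from convexity lemma (i), is circular: lemma (i) applies to $\nu^+$ only once you know $\nu^+\in\operatorname{conv}_\Q\{w\lambda\}$, and by $W$-invariance of the hull together with (i) and (ii) that membership is \emph{equivalent} to $\nu^+\le\lambda$; knowing $\mu\in\operatorname{conv}_\Q\{w\lambda\}$ and $\nu=\mu+\alpha_i\le\lambda$ does not place $\nu$ in the hull. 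Concretely, take $\mathfrak{g}=C_2^{(1)}$ (nodes $0,1,2$; $\alpha_1$ short, $\alpha_2$ long, $\langle\alpha_2,\alpha_1^\vee\rangle=-2$, $\langle\alpha_1,\alpha_2^\vee\rangle=-1$, $\langle\alpha_1,\alpha_0^\vee\rangle=-1$, $\langle\alpha_2,\alpha_0^\vee\rangle=0$), and set $\mu=\Lambda_0+\Lambda_2$, $\lambda=\mu+\alpha_1+\alpha_2$. Then $\mu,\lambda\in\dom$ with positive level (the pairings of $\lambda$ with $\alpha_0^\vee,\alpha_1^\vee,\alpha_2^\vee$ are $0,0,2$), and $k_1=k_2=1$, $k_0=0$. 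Since $\langle\lambda+\mu,\alpha_1^\vee\rangle=0$ while $\langle\lambda+\mu,\alpha_2^\vee\rangle=3$, your criterion forces $i=2$, so $\nu=\mu+\alpha_2$. But $\langle\nu,\alpha_1^\vee\rangle=-2$, so $s_1\nu=\nu+2\alpha_1$ and hence $\nu^+\ge\nu+2\alpha_1$; since $\lambda-(\nu+2\alpha_1)=-\alpha_1$, the $\alpha_1$-coefficient of $\lambda-\nu^+$ is negative, so $\nu^+\not\le\lambda$. The induction cannot be invoked, and worse, by the proposition itself $\nu\notin\mathcal{P}(\lambda)$, so no patch can make your chosen $\nu$ a weight; the direction that does work here is $i=1$, which your positivity criterion excludes. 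A telltale sign of the misassembly is that the inequality $\langle\lambda+\mu,\alpha_i^\vee\rangle>0$ you derive from the form is never used again in your argument.

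The correct standard argument (this is how the cited result is proved in Kac's book, and it is Lemma B of Humphreys, Section 13.4, in the finite-dimensional case) runs your positivity computation \emph{downward inside} $\mathcal{P}(\lambda)$, where it genuinely does the work: among all weights $\nu\in\mathcal{P}(\lambda)$ with $\mu\le\nu\le\lambda$ (a nonempty set containing $\lambda$), choose a minimal one. If $\nu\ne\mu$, write $\nu-\mu=\sum_i k_i\alpha_i$ and compute $(\nu-\mu\mid\nu)=(\nu-\mu\mid\nu-\mu)+(\nu-\mu\mid\mu)$; semidefiniteness and dominance of $\mu$ give nonnegativity, and your level argument --- which is exactly the right affine input, and the part of your write-up that is correct and essential --- rules out the degenerate case $\nu-\mu\in\Q_{>0}\delta$, so $(\nu-\mu\mid\nu)>0$ and there exists $i$ with $k_i>0$ and $\langle\nu,\alpha_i^\vee\rangle>0$. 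Since this $\nu$ \emph{is} a weight, the $\mathfrak{sl}_2$-string property yields $\nu-\alpha_i\in\mathcal{P}(\lambda)$ with $\mu\le\nu-\alpha_i\le\lambda$, contradicting minimality; hence the minimal weight is $\mu$ itself. The structural point your proposal misses is that the string descent may only be applied at points already known to lie in $\mathcal{P}(\lambda)$: points of the form $\mu+\alpha_i$ with $\mu+\alpha_i\le\lambda$ need not be weights at all, as the example above shows. Your reduction to dominant representatives via the Tits cone, your treatment of the second equality via lemmas (i)--(ii), and your handling of the semidefinite form all survive and can be kept once the induction is replaced by this minimality argument.
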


\subsection{$\delta$-maximal weights}
Let $\lambda \in \dom$ be a dominant  integral  weight, and consider the associated highest weight irreducible representation $V(\lambda)$ with set of weights $\mc[P](\lambda)$. We recall the following definition.

\begin{definition} A weight $\mu \in \mc[P](\lambda)$ is called $\delta$-maximal if $\mu +k \delta \not \in \mc[P](\lambda)$ for any $k >0$. We denote by $\mc[P]_{max}(\lambda)$ the set of all $\delta$-maximal weights. 
\end{definition}

Since $\delta$ is fixed by all Weyl group elements $w \in W$, the set of $\delta$-maximal weights in $\mc[P](\lambda)$ is $W$-invariant. But as any weight in $\mc[P](\lambda)$ can be $W$-translated to a  dominant weight (cf. \cite{Ku3}*{Proposition 1.4.2(c)} together with Proposition \ref{convhull}), it suffices to understand the set of $\delta$-maximal \textit{dominant} weights; we denote this set by $\dom_{max}(\lambda)$. The study of $\delta$-maximal dominant weights plays a crucial role in applications of representation theory to modular forms and conformal field theory, via understanding (up to renormalization) the ``string functions" 
$$
t \mapsto \op{dim}(V(\lambda)_{\mu-t\delta})
$$
for $\mu \in \dom_{max}(\lambda)$ (see \cite{Kac}*{Chapter 12}). As these will play an important role in what is to come, we record the following lemma on the $``\delta$-strings" $\{\mu-k\delta\}_{k\geq 0} \subset \mc[P](\lambda)$, which is a reformulation of \cite{Kac}*{Prop. 12.5(e)}.

\begin{lemma} \label{unbroken} 
Let $\lambda \in \dom$ and $\mu \in \mc[P](\lambda)$. Then $\mu-k\delta \in \mc[P](\lambda)$ for all $k \geq 0$. 
\end{lemma}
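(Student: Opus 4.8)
The plan is to deduce everything from the convex-hull description of $\mathcal{P}(\lambda)$ supplied by Proposition \ref{convhull}, namely
$$
\mathcal{P}(\lambda)=(\lambda+\mc[Q])\cap \op{conv}_\Q\{w\lambda: w\in W\}=: (\lambda+\mc[Q])\cap C.
$$
Thus for each integer $k\geq 0$ it suffices to check two things: that $\mu-k\delta$ lies in the coset $\lambda+\mc[Q]$, and that it lies in the convex set $C$. The lattice condition is immediate: since $\delta=\alpha_0+\theta=\sum_{i\in I}a_i\alpha_i$ with $a_i\in\Z_+$, we have $\delta\in\mc[Q]$, hence $k\delta\in\mc[Q]$; as $\mu\in\mathcal{P}(\lambda)\subseteq \lambda+\mc[Q]$, also $\mu-k\delta\in\lambda+\mc[Q]$.

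For the convexity condition the key input is the explicit action of the translation subgroup of the affine Weyl group. Recall that $W\cong \mathring{W}\ltimes\{t_\beta\}$, where $\beta$ ranges over the finite coroot lattice (viewed inside $\mathring{\mf[h]}^*$ via the form), and that $t_\beta$ acts on a weight $\mu$ of level $m=\langle\mu,K\rangle$ by the standard formula (cf. \cite{Kac}*{Chapter 6}), giving
$$
t_{\pm n\beta}(\mu)=\mu\pm nm\beta-\Bigl(\pm n(\mu,\beta)+\tfrac12 n^2(\beta,\beta)m\Bigr)\delta .
$$
Here $m=\langle\mu,K\rangle=\langle\lambda,K\rangle>0$ is the (positive) level, using that $\langle\alpha_i,K\rangle=0$ for all $i$ and $\mu\in\lambda+\mc[Q]$. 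Because $C$ is $W$-invariant and $\mu\in C$, both $t_{n\beta}(\mu)$ and $t_{-n\beta}(\mu)$ lie in $C$; averaging them cancels the $\beta$-term and the linear $\delta$-term and yields
$$
\mu-\tfrac12 n^2(\beta,\beta)m\,\delta\in C\qquad\text{for all } n\geq 1 .
$$
Fixing any $\beta\neq 0$, positive-definiteness of the form on the finite part forces $(\beta,\beta)>0$, so together with $m>0$ the scalars $k_n:=\tfrac12 n^2(\beta,\beta)m$ tend to $+\infty$.

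I would then conclude by convexity. Given an integer $k\geq 0$, choose $n$ with $k_n\geq k$ and write
$$
\mu-k\delta=\Bigl(1-\tfrac{k}{k_n}\Bigr)\mu+\tfrac{k}{k_n}\bigl(\mu-k_n\delta\bigr),
$$
a rational convex combination of two points of $C$; hence $\mu-k\delta\in C$. Combined with the lattice membership above, Proposition \ref{convhull} gives $\mu-k\delta\in\mathcal{P}(\lambda)$ for every integer $k\geq 0$, as claimed.

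I expect the only real subtlety to be bookkeeping the translation formula with the chosen normalization $(\theta|\theta)=2$ and confirming that it is exactly the \emph{positivity} of the level that drives the $\delta$-coefficient to $-\infty$. This positivity hypothesis, in force throughout the paper, is precisely what makes the downward $\delta$-strings unbounded inside the convex hull $C$; without it (e.g.\ at level $0$) the averaging argument would not escape to $-\infty$ and the conclusion could fail.
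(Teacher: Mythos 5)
Your proof is correct, but it takes a genuinely different route from the paper, which offers no argument at all: Lemma \ref{unbroken} is recorded there as a reformulation of \cite{Kac}*{Prop.\ 12.5(e)}, so the ``paper's proof'' is a citation. Your argument is a legitimate self-contained derivation from Proposition \ref{convhull}: the lattice condition $\mu-k\delta\in\lambda+\mathcal{Q}$ is immediate; the translation formula $t_{\pm n\beta}(\mu)=\mu\pm nm\beta-\bigl(\pm n(\mu|\beta)+\tfrac{1}{2}n^2(\beta|\beta)m\bigr)\delta$ is the standard one from \cite{Kac}*{Chapter 6}; the average of $t_{n\beta}(\mu)$ and $t_{-n\beta}(\mu)$ lies in the $W$-invariant convex set $\op{conv}_\Q\{w\lambda : w\in W\}$ and equals $\mu-\tfrac{1}{2}n^2(\beta|\beta)m\,\delta$; and interpolating between $\mu$ and these points (with rational coefficients, since $(\beta|\beta)\in\Q$ and $m\in\Z_{>0}$) produces every $\mu-k\delta$. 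Your closing observation is also correct that the standing positive-level hypothesis of Section \ref{Sect2} is exactly what makes the coefficients $k_n$ tend to infinity; at level zero the lemma genuinely fails, since then $\lambda=s\delta$ and $V(s\delta)$ is one-dimensional. That said, there is a shorter derivation from the \emph{first} equality of Proposition \ref{convhull}, $\mathcal{P}(\lambda)=W\cdot\{\nu\in\dom:\nu\leq\lambda\}$, which is closer to what the cited result in Kac's book amounts to: since $\delta$ is $W$-fixed and $\mathcal{P}(\lambda)$ is $W$-stable, one may assume $\mu\in\dom$ with $\mu\leq\lambda$; then $\mu-k\delta$ is again dominant (because $\langle\delta,\alpha_i^\vee\rangle=0$ for all $i$) and again $\leq\lambda$ (because $k\delta\in\mathcal{Q}^+$), hence it lies in $\mathcal{P}(\lambda)$, and applying $w$ finishes the proof. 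So your approach costs the explicit affine Weyl group action and a convexity bookkeeping step, but buys a quantitative picture of why positive level forces the downward $\delta$-strings to be unbounded; the $W$-orbit argument is more elementary and shorter, using only the same proposition you already invoke.
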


Combining Lemma \ref{unbroken} and Proposition \ref{convhull}, we can get the following description of $\delta$-maximal dominant weights, taken from \cite{BK}*{Proposition 4.4}.

\begin{proposition} \label{maxsupport}
Let $a_0, a_1, \dots, a_l$ be defined by $\delta=\sum_{i \in I} a_i \alpha_i$ and let $\lambda \in \dom$. Then $\mc[P]^+_{max}(\lambda)$ is the set of all $\mu \in \dom$ such that 
\begin{enumerate}
\item $\mu \leq \lambda$, and 
\item if $\lambda-\mu= \sum_{i \in I} c_i \alpha_i$ then there exists some $i \in I$ such that $c_i < a_i$. 
\end{enumerate}
\end{proposition}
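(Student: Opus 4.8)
The plan is to reduce the defining condition of $\delta$-maximality, which a priori is an infinite family of requirements (one for each $k>0$), to the single requirement that $\mu+\delta \notin \mc[P](\lambda)$, and then to translate this membership question into the stated coefficient inequality using the combinatorial description of $\mc[P](\lambda)$ from Proposition \ref{convhull}. First I would record the elementary but essential observation that $\delta$ is fixed by every simple reflection, equivalently $\langle \delta, \alpha_i^\vee\rangle = 0$ for all $i \in I$. Consequently, translation by $\delta$ preserves dominance: if $\mu \in \dom$ then $\mu+\delta \in \dom$ as well. This is exactly what will let me apply Proposition \ref{convhull} to the weight $\mu+\delta$ directly.

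The key reduction is the claim that, for $\mu \in \mc[P](\lambda)$, the weight $\mu$ is $\delta$-maximal if and only if $\mu+\delta \notin \mc[P](\lambda)$. One direction is immediate from the definition, taking $k=1$. For the converse I would argue contrapositively: if $\mu+k\delta \in \mc[P](\lambda)$ for some $k>1$, then applying Lemma \ref{unbroken} to the weight $\mu+k\delta$ shows that the entire downward $\delta$-string $\{\mu+k\delta - j\delta\}_{j\geq 0}$ lies in $\mc[P](\lambda)$, and in particular $\mu+\delta \in \mc[P](\lambda)$. Thus a single failure at $k=1$ already forces $\mu + k\delta \notin \mc[P](\lambda)$ for every $k>0$, which is precisely $\delta$-maximality.

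Finally I would combine these ingredients. Condition $(1)$, that $\mu \leq \lambda$, is equivalent for dominant $\mu$ (via Proposition \ref{convhull}) to $\mu \in \mc[P](\lambda)$, which is clearly necessary for $\mu$ to be a $\delta$-maximal weight; so I may assume it. Granting $(1)$, and using that $\mu+\delta \in \dom$ from the first paragraph, Proposition \ref{convhull} gives that $\mu+\delta \in \mc[P](\lambda)$ if and only if $\mu+\delta \leq \lambda$, i.e. $\lambda - \mu - \delta \in \mc[Q]^+$. Writing $\lambda-\mu = \sum_{i\in I} c_i\alpha_i$ and $\delta = \sum_{i\in I} a_i\alpha_i$, this membership says exactly that $c_i \geq a_i$ for every $i \in I$. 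Negating, $\mu+\delta \notin \mc[P](\lambda)$ if and only if $c_i < a_i$ for some $i$, which by the reduction of the previous paragraph is equivalent to $\delta$-maximality. This is precisely condition $(2)$, completing the characterization.

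I do not expect a serious obstacle here; the content is concentrated in the reduction to $k=1$, which is where Lemma \ref{unbroken} does the real work by propagating a single weight down its whole $\delta$-string. The only points requiring bookkeeping care are that $\lambda-\mu$ indeed lies in the root lattice $\mc[Q]$ (forced by $\mu \in \mc[P](\lambda) \subset \lambda + \mc[Q]$), so that the coefficients $c_i$ are well-defined integers, and that they are nonnegative by virtue of $\mu \leq \lambda$, so that the comparison with the positive integers $a_i$ is meaningful.
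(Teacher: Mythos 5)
Your proof is correct and takes essentially the same approach as the paper's: both rest on the observation that $\mu+\delta$ stays dominant (since $\langle \delta, \alpha_i^\vee\rangle = 0$), so that Proposition \ref{convhull} converts membership of $\mu+\delta$ in $\mc[P](\lambda)$ into the coefficient comparison $c_i \geq a_i$ for all $i$. The only difference is organizational: you make the reverse implication explicit by using Lemma \ref{unbroken} to reduce $\delta$-maximality to the single condition $\mu+\delta \notin \mc[P](\lambda)$, a direction the paper dispatches with ``the converse is clear.''
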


\begin{proof}
Suppose for contradiction that $\mu \in \mc[P]^+_{max}(\lambda)$ and $\lambda - \mu = \sum_{i \in I} c_i \alpha_i$ with $c_i \geq a_i$ for all $i$. Set $\mu':=\mu+\delta \in \dom$. Then we have 
$$
\lambda-\mu' = \lambda-\mu-\delta=\sum_{i \in I} (c_i-a_i) \alpha_i.
$$
Note that since $c_i - a_i \geq 0$ for all $i$ by assumption, we get that $\lambda - \mu' \in \mc[Q]^+$, or that $\mu' \leq \lambda$ in dominance order. But then by Proposition \ref{convhull}, we get that $\mu' \in \mc[P](\lambda)$, which contradicts $\mu$ being $\delta$-maximal. The converse is clear. 
\end{proof}

\subsection{Dominant weight polyhedra}
We next want to better understand the dominant weights $\mu \in \dom$ such that $\mu \in \mc[P](\lambda)$. By Lemma \ref{unbroken}, it would suffice to understand those weights in $\dom_{max}(\lambda)$. However, this is in general a subtle issue, as it is not straightforward to describe uniformly the set $\dom_{max}(\lambda)$ in terms of $\lambda$. We instead consider a larger set, which we refer to as the \textit{dominant weight polyhedron} associated to $\lambda$, and denote it  by $D_\lambda$. This will be a rational, (generically) unbounded polyhedron in the sense of \cite{Sch}, which we take as our standard reference. 

We restrict for our purposes to the case when $\lambda \in \dom$ is regular dominant, so that $\langle \lambda, \alpha_i^\vee \rangle>0$ for all $i \in I:=\{0, 1, \dots, l\}$. We will also assume that $\langle \lambda, d \rangle=0$, up to replacing $\lambda$ with $\lambda-\langle \lambda, d\rangle \delta$.  Let $\mc[C]_\Q$ denote the rational dominant chamber, defined by 
$$
\mc[C]_\Q:=\{\mu \in \mf[h]^\ast_\Q: \ \langle \mu, \alpha_i^\vee \rangle \geq 0 \ \forall i \in I\},
$$
where $ \mf[h]^*_{\Z} :=\left( \oplus_{i=0}^l \Z\Lambda_i\right) \oplus \Z \delta$ and $\mf[h]^*_\Q:= \Q\otimes_\Z\mf[h]^*_\Z$. 
We now fix the following definition. 

\begin{definition} For a dominant integral weight $\lambda \in \dom$, the dominant weight polyhedron $D_\lambda$ is defined by 
$$
D_\lambda:= \mc[C]_\Q \cap \op{conv}_\Q \{w \lambda: w \in W\}.
$$
\end{definition}

Thus, by Proposition \ref{convhull}, the points $\mu \in D_\lambda$ correspond to rational dominant weights such that $N\mu \in \mc[P](N \lambda) \cap \dom$ for some scaling factor $N \in \Z_{>0}$. Again by Proposition \ref{convhull}, the points $\mu \in D_\lambda$ are determined by the set of inequalities for $\mu \in \mf[h]^*_\Q$:
$$
\begin{cases}
\langle \mu, \alpha_i^\vee \rangle \geq 0 \ \forall i \in I, \\
\langle \lambda-\mu, x_i \rangle \geq 0 \ \forall i \in I, \\
\langle \lambda-\mu, K\rangle =0,\\
\end{cases}
$$
where $x_i$ is the fundamental coweight defined by $\langle \alpha_j, x_i \rangle=\delta_{i,j}$. The faces of $D_\lambda$ are given by replacing a subset of the above inequalities by equalities. To this end, let $J \subset I$ and and consider the sets
$$
\begin{aligned}
A_J &:= \sum_{k \in J} \Q_+ \Lambda_k, \\
\overline{A}_J &:= A_J + \Q_+(-\delta), \\
B_J &:= \sum_{k \in J} \Q_+ \alpha_k, \\
C_J= C_J(\lambda) &:= \lambda-B_J. 
\end{aligned}
$$
In particular we set  $\overline{A}=\overline{A}_I= \sum_{k \in I} \Q_+ \Lambda_k + \Q_+(-\delta)$, $B= B_I= \sum_{k \in I} \Q_+ \alpha_k$, and $C=C_I=\lambda-B$. Then, $\overline{A}_H\cap C_K$ are faces of $D_\lambda$  for $H, K\subset I$.  By construction, we have that 
\begin{equation}\label{eqn3.3.1}
D_\lambda=\overline{A} \cap C;
\end{equation}
note that as any $\mu \in D_\lambda$ satisfies $\langle \mu, d \rangle \leq \langle \lambda, d\rangle$ since  $N\mu \leq N \lambda$ (for some positive integer $N$)  in dominance order, the restriction to $\Q_+(-\delta)$ in the definition of $\overline{A}$ suffices. 

Unlike when considering a finite-dimensional semisimple Lie algebra $\mathring{\mf[g]}$, the polyhedra $D_\lambda$ are typically unbounded. First, we want to determine the ``infinite directions" of the polyhedron $D_\lambda$. We introduce the following definition from \cite{Sch}*{Section 8.2}.

\begin{definition}
For a polyhedron $P$, the characteristic cone $\op{char.cone}(P)$ is given by 
$$
\op{char.cone}(P):= \{y : x+y \in P \ \forall x \in P \}.
$$
\end{definition}

\begin{lemma} \label{charcone}
For $\lambda \in \dom$, we have $\op{char.cone}(D_\lambda)=\Q_+(-\delta)$. 
\end{lemma}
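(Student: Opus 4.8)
The plan is to read off $\op{char.cone}(D_\lambda)$ directly from the defining inequalities of $D_\lambda$, invoking the general principle \cite{Sch}*{Section 8.2} that the characteristic cone of a nonempty polyhedron presented as $\{x : Ax \leq b\}$ is the solution set of the associated homogeneous system $\{y : Ay \leq 0\}$. Since $\lambda \in D_\lambda$ (as $\lambda$ is dominant), the polyhedron is nonempty, so this applies. Recalling that $D_\lambda$ is cut out by the conditions $\langle \mu, \alpha_i^\vee\rangle \geq 0$ and $\langle \lambda - \mu, x_i \rangle \geq 0$ for all $i \in I$ together with $\langle \lambda - \mu, K\rangle = 0$, homogenizing each constraint (replacing $\mu$ by $y$ and the right-hand sides by $0$) yields
$$
\op{char.cone}(D_\lambda) = \{ y \in \mf[h]^*_\Q : \langle y, \alpha_i^\vee\rangle \geq 0 \ \text{and} \ \langle y, x_i\rangle \leq 0 \ \forall i \in I, \ \langle y, K \rangle = 0 \}.
$$
It then remains to identify this cone with $\Q_+(-\delta)$.

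For the inclusion $\Q_+(-\delta) \subseteq \op{char.cone}(D_\lambda)$, I would simply check that $-\delta$ satisfies the three homogeneous conditions: $\langle \delta, \alpha_i^\vee\rangle = 0$ since $\delta$ is $W$-invariant; $\langle \delta, K\rangle = 0$ since $\delta$ has level zero; and, writing $\delta = \sum_{j \in I} a_j \alpha_j$, one has $\langle \delta, x_i\rangle = a_i > 0$, so $\langle -\delta, x_i\rangle \leq 0$.

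The crux is the reverse inclusion. Let $y$ lie in the cone above. Writing the canonical central element as $K = \sum_{i \in I} a_i^\vee \alpha_i^\vee$ with all comarks $a_i^\vee > 0$ \cite{Kac}*{Chapter 6}, the equality constraint becomes $0 = \langle y, K\rangle = \sum_{i \in I} a_i^\vee \langle y, \alpha_i^\vee\rangle$, a sum of nonnegative terms; hence $\langle y, \alpha_i^\vee\rangle = 0$ for every $i \in I$. As the $l+1$ simple coroots $\{\alpha_i^\vee\}_{i \in I}$ are linearly independent, they define $l+1$ independent linear functionals on the $(l+2)$-dimensional space $\mf[h]^*_\Q$, so their common annihilator is one-dimensional; since $\delta$ annihilates all of them, this annihilator is exactly $\Q\delta$, and thus $y = c\delta$ for some $c \in \Q$. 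Finally, the remaining condition $\langle y, x_i\rangle = c\,a_i \leq 0$ with $a_i > 0$ forces $c \leq 0$, i.e. $y \in \Q_+(-\delta)$.

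I expect the only genuinely delicate points to be the passage to the homogeneous system (a standard polyhedral fact, and the reason for first noting that $D_\lambda$ is nonempty) and the identification of the common annihilator of the simple coroots with $\Q\delta$. The positivity of the marks $a_i$ and the comarks $a_i^\vee$ is exactly what rigidifies the cone down to the single ray $\Q_+(-\delta)$, and is precisely where the affine structure enters the argument.
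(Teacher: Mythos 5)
Your proof is correct, and its algebraic core coincides with the paper's: both arguments hinge on writing $K=\sum_{i\in I}a_i^\vee\alpha_i^\vee$ with strictly positive comarks to force $\langle y,\alpha_i^\vee\rangle=0$ for every $i$, then identifying the common annihilator of the simple coroots inside $\mathfrak{h}^*_\Q$ with the line $\Q\delta$, and finally using positivity of the marks $a_i$ to pin down the negative ray. Where you genuinely differ is in the polyhedral bookkeeping. The paper does not invoke the homogenization theorem from \cite{Sch}; it works directly from the definition of $\op{char.cone}$ applied to the presentation $D_\lambda=\overline{A}\cap C$: membership $\lambda+y\in C$ gives $y\in-\sum_{k}\Q_+\alpha_k$, and nonnegativity of $\langle y,\alpha_j^\vee\rangle$ is obtained by a limiting argument (adding $Ny$ for $N\gg 0$ and noting the pairing with $\alpha_j^\vee$ would eventually go negative, contradicting membership in $\overline{A}$). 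Your appeal to the standard fact that $\op{char.cone}\{x:Ax\leq b\}=\{y:Ay\leq 0\}$ for a nonempty polyhedron --- legitimate here since $\lambda\in D_\lambda$ and since the paper has already recorded the inequality description of $D_\lambda$ via Proposition \ref{convhull} --- collapses those two steps into a single citation, and it also changes the final sign argument: you read off $c\leq 0$ from the homogenized inequalities $\langle y,x_i\rangle\leq 0$ together with $\langle\delta,x_i\rangle=a_i>0$, whereas the paper gets the sign from $y\in-\sum_k\Q_+\alpha_k$ combined with $y\in\Q\delta$. The two routes are equivalent in substance; yours is shorter and leans on general polyhedral theory, while the paper's is self-contained and never needs the explicit homogeneous system.
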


\begin{proof}
As in (\ref{eqn3.3.1}),  $D_\lambda = \overline{A} \cap C$. Clearly $\Q_+ (-\delta) \subseteq \op{char.cone}(\overline{A} \cap C)$. Let $y \in \mf[h]^\ast_\Q$ such that $y \in \op{char.cone}(\overline{A} \cap C)$. Since $\lambda \in\overline{A} \cap C$, we immediately  conclude $y \in -\sum_{k \geq 0} \Q_+ \alpha_k$, by the definition of $C$.

Assume that $\langle y, \alpha_j^\vee \rangle <0$ for some $j$. Then for any $x \in \overline{A} \cap C$, we have $x+Ny \in \overline{A} \cap C$ for any $N \in \Z_+$. Taking $N$ sufficiently large, we get that 
$$
\langle x+Ny, \alpha_j^\vee \rangle = \langle x, \alpha_j^\vee \rangle +N \langle y, \alpha_j^\vee \rangle <0.
$$
Contradiction, since $x+Ny \in \overline{A}$, and any $z \in \overline{A}$ satisfies $\langle z, \alpha_j^\vee \rangle \geq 0$ for all $j$. Thus, $\langle y, \alpha_j^\vee \rangle \geq 0$ for all $j$.

Next, suppose that $\langle y, \alpha_j^\vee \rangle >0 $ for some $j$. Note that we can write the canonical central element $K$ of $\mf[g]$ as $K= \sum_{k \geq 0} a^\vee_k \alpha_k^\vee$ for the appropriate positive labels $a_k^\vee$,  and as $y$ pairs nonnegatively with all $\alpha_k^\vee$, we get 
$$
\langle y, K \rangle >0;
$$
contradiction, since $y \in -\sum_{k \geq 0} \Q_+ \alpha_k$, and $\alpha_k$ pairs to zero with $K$ for all $k \geq 0$. 

Thus, $\langle y, \alpha_k^\vee \rangle =0$ for all $k \geq 0$. Now, using that $\mf[h]^\ast_\Q = \mathring{\mf[h]}^\ast_\Q \oplus \left( \Q (\delta) \oplus \Q (\Lambda_0) \right)$, we get that 
$$
y \in \mathring{\mf[h]}^\ast_\Q \oplus \Q (\delta) 
$$
and is orthogonal to all of $\mathring{\mf[h]}^\ast_\Q$; thus necessarily $y \in \Q(\delta)$. In particular, this forces $y \in \Q_+(-\delta)$, as desired. 
\end{proof}

We are next interested in the minimal faces of $D_\lambda$. By the general theory of polyhedra \cite{Sch}*{Section 8.5(22)}, the minimal faces of a polyhedron $P$ are translates of the ``lineality space" 
$$
\op{lin. space}(P):=\op{char. cone}(P) \cap - \op{char. cone}(P).
$$
Then by Lemma \ref{charcone}, in the case of $D_\lambda$, we get that $\op{lin. space}(D_\lambda)=\{0\}$. Thus the minimal faces of $D_\lambda$ are zero-dimensional, hence vertices. To understand these vertices, we will examine the various face intersections $A_H \cap C_K$ and $\overline{A}_H \cap C_K$, for subsets $H, K \subseteq I$, and analyze those which intersect in single points. First, in the next lemma we introduce the possible  candidates for the vertices of $D_\lambda$. 

\begin{lemma} \label{vertexdefinition}
Let $\lambda \in \dom$ be a regular dominant integral weight. For any subset $J \subsetneq I$, let $W_J=\langle s_j : j \in J \rangle \subset W$ be the corresponding (finite) parabolic subgroup corresponding to the simple roots $\{\alpha_j\}_{j \in J}$. Define
$$
\hat{b}_J(\lambda):= \frac{1}{|W_J|} \sum_{w \in W_J} w(\lambda).
$$
Then $\hat{b}_J(\lambda) \in D_\lambda$. In fact, for each $J \subsetneq I$ we have $\hat{b}_J(\lambda) \in \overline{A}_{I \backslash J}$, and $\hat{b}_J(\lambda) \neq \hat{b}_{J'}(\lambda)$ for $J \neq J'$.
\end{lemma}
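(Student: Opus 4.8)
The plan is to verify each of the three assertions in turn, relying on the defining inequalities of $D_\lambda$ and the facial structure $\overline{A}_H \cap C_K$.

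First I would show $\hat{b}_J(\lambda) \in D_\lambda$. Since $D_\lambda = \mc[C]_\Q \cap \op{conv}_\Q\{w\lambda : w \in W\}$, and $\hat{b}_J(\lambda)$ is by definition the average $\frac{1}{|W_J|}\sum_{w \in W_J} w(\lambda)$ of points $w\lambda$ over the finite group $W_J$, it lies in the convex hull $\op{conv}_\Q\{w\lambda : w \in W\}$ automatically. So the content is to check $\hat{b}_J(\lambda) \in \mc[C]_\Q$, i.e. that $\langle \hat{b}_J(\lambda), \alpha_i^\vee \rangle \geq 0$ for all $i \in I$. For $i \in J$, I would use that $\hat{b}_J(\lambda)$ is $W_J$-invariant (averaging over the whole parabolic subgroup makes it fixed by each $s_j$, $j \in J$), so it is a dominant weight for the finite root subsystem generated by $\{\alpha_j\}_{j \in J}$; hence it pairs nonnegatively with each $\alpha_j^\vee$ for $j \in J$. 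For $i \notin J$, I expect to need a more careful argument: writing $\hat{b}_J(\lambda) = \lambda - \frac{1}{|W_J|}\sum_{w \in W_J}(\lambda - w\lambda)$, each $\lambda - w\lambda \in \mc[Q]^+$ is a nonnegative combination of the $\alpha_j$ with $j \in J$ (since $w \in W_J$), and one checks that such corrections do not push the $\alpha_i^\vee$-pairing below zero for $i \notin J$. A clean way is to invoke regularity of $\lambda$ together with the standard fact that the $W_J$-average of a regular dominant weight remains dominant (this is where $\langle \lambda, \alpha_i^\vee\rangle > 0$ is used).

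Next I would establish the membership $\hat{b}_J(\lambda) \in \overline{A}_{I\setminus J}$. Recall $\overline{A}_{I\setminus J} = \sum_{k \in I\setminus J} \Q_+ \Lambda_k + \Q_+(-\delta)$. The key point is that the $W_J$-averaging kills precisely the $\Lambda_j$-components for $j \in J$ while possibly introducing a $(-\delta)$-component. Concretely, expanding $\hat{b}_J(\lambda)$ in the basis $\{\Lambda_i\}_{i \in I} \cup \{\delta\}$, the coefficient of $\Lambda_j$ for $j \in J$ is $\langle \hat{b}_J(\lambda), \alpha_j^\vee\rangle = 0$ by the $W_J$-invariance just used, so no $\Lambda_j$ ($j\in J$) appears; the coefficients of $\Lambda_k$ for $k \in I\setminus J$ are nonnegative by the dominance checked above; and the $\delta$-coefficient is $\leq 0$ because $\langle \hat{b}_J(\lambda), d\rangle \leq \langle \lambda, d\rangle = 0$ (as $\hat{b}_J(\lambda) \leq \lambda$ in dominance order, rationally). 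This places $\hat{b}_J(\lambda)$ in $\overline{A}_{I\setminus J}$ as claimed.

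Finally, for the distinctness $\hat{b}_J(\lambda) \neq \hat{b}_{J'}(\lambda)$ when $J \neq J'$, I would again use regularity. Suppose $j \in J \setminus J'$ (after symmetry). Then $\langle \hat{b}_J(\lambda), \alpha_j^\vee\rangle = 0$ by $W_J$-invariance, whereas I claim $\langle \hat{b}_{J'}(\lambda), \alpha_j^\vee \rangle > 0$: since $j \notin J'$, averaging over $W_{J'}$ only subtracts a nonnegative combination of $\alpha_k$ with $k \in J'$ from the strictly positive quantity $\langle \lambda, \alpha_j^\vee\rangle$, and one verifies this pairing stays strictly positive (this is essentially the statement that $\hat{b}_{J'}(\lambda)$ lies in the interior of the $\alpha_j^\vee \geq 0$ half-space). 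The two weights then differ in their $\alpha_j^\vee$-pairing, so are distinct. The main obstacle I anticipate is the inequality for $i \notin J$ in the first step and its strict refinement in the last step: controlling the sign of $\langle \hat{b}_J(\lambda), \alpha_i^\vee\rangle$ requires understanding how the $W_J$-average interacts with coroots outside the parabolic, and this is exactly where the regular dominant hypothesis on $\lambda$ must be used essentially rather than incidentally.
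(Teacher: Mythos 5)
Your proposal is correct, and its skeleton coincides with the paper's proof: convex-hull membership is automatic since $\hat{b}_J(\lambda)$ is a $W_J$-average of points $w\lambda$; dominance is checked by the case split $i \in J$ versus $i \notin J$; membership in $\overline{A}_{I\backslash J}$ follows from the vanishing of $\langle \hat{b}_J(\lambda), \alpha_j^\vee\rangle$ for $j \in J$, positivity for $j\notin J$, and $\langle \hat{b}_J(\lambda), d\rangle \leq \langle \lambda, d\rangle = 0$; and distinctness comes from the same zero-versus-strictly-positive dichotomy. The one genuine difference is the mechanism for the pairing computations. The paper moves the average to the coroot side, writing $|W_J|\,\langle \hat{b}_J(\lambda), \alpha_i^\vee\rangle = \langle \lambda, \sum_{w \in W_J} w^{-1}(\alpha_i^\vee)\rangle$, which vanishes for $i \in J$ (the coroot sum is zero) and is strictly positive for $i \notin J$ (each $w^{-1}(\alpha_i^\vee)$ remains a positive coroot and $\lambda$ is regular dominant). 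You stay on the root side, writing $\hat{b}_J(\lambda) = \lambda - \frac{1}{|W_J|}\sum_{w \in W_J}(\lambda - w\lambda)$ with $\lambda - w\lambda \in \sum_{j\in J}\Q_+\alpha_j$; the step you leave as ``one checks'' is the crux and should be made explicit, but it is a one-liner: for $i \notin J$ and $j \in J$ the Cartan pairings satisfy $\langle \alpha_j, \alpha_i^\vee\rangle \leq 0$, so the correction terms can only \emph{increase} the pairing, giving $\langle \hat{b}_J(\lambda), \alpha_i^\vee\rangle \geq \langle \lambda, \alpha_i^\vee\rangle > 0$. Note that this yields strict positivity immediately, which is exactly what your distinctness step also needs. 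With that line inserted you should delete the appeal to ``the standard fact that the $W_J$-average of a regular dominant weight remains dominant'': as stated it is circular, since that fact is precisely the assertion $\hat{b}_J(\lambda) \in \mathcal{C}_\Q$ under proof (your preceding computation \emph{is} its proof). Your handling of $i \in J$ via $W_J$-invariance, which forces the pairing to equal zero rather than merely be nonnegative, is an equivalent substitute for the paper's vanishing-sum argument, and both routes deliver the same conclusions; the paper's coroot-side formula treats the two cases uniformly, while your root-side version makes the inequality against $\lambda$ on coroots outside $J$ more transparent.
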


\begin{proof}
First, we show that $\hat{b}_J(\lambda) \in D_\lambda$. To this end, by its definition, $\hat{b}_J (\lambda)\in  \op{conv}_\Q \{w \lambda: w \in W\}$.
It remains to show that $\hat{b}_J(\lambda) \in \mc[C]_\Q$.  Let $\alpha_i^\vee$ be a simple coroot. We consider two cases: first, suppose that $i \in J$. Then we have that 
\begin{equation} \label{vertvanishing}
|W_J| \langle \hat{b}_J(\lambda), \alpha_i^\vee \rangle = \left \langle \sum_{w \in W_J} w(\lambda), \alpha_i^\vee \right \rangle = \left \langle \lambda, \sum_{w \in W_J} w^{-1}(\alpha_i^\vee) \right \rangle=0,
\end{equation}
since $i \in J$ so that $\sum_{w \in W_J} w^{-1}(\alpha_i^\vee)=0$. Else if $i \not \in J$, then for all $w \in W_J$, we have $w^{-1}(\alpha_i^\vee)$ is a positive coroot, hence
\begin{equation} \label{vertnonzero}
|W_J| \langle \hat{b}_J(\lambda), \alpha_i^\vee \rangle = \left \langle \sum_{w \in W_J} w(\lambda), \alpha_i^\vee \right \rangle = \left \langle \lambda, \sum_{w \in W_J} w^{-1}(\alpha_i^\vee) \right \rangle >0,
\end{equation}
since $\lambda$ is regular dominant. Thus, $\hat{b}_J(\lambda) \in \mc[C]_\Q$, as desired, and thus $\hat{b}_J(\lambda) \in D_\lambda$.

 Further, since by (\ref{vertvanishing}) and (\ref{vertnonzero}),
 $\hat{b}_J(\lambda)$ is a rational dominant weight that vanishes precisely on $\{\alpha_j^\vee\}_{j \in J}$, we get that $\hat{b}_J(\lambda) \in \overline{A}_{I \backslash J}$ by definition,  since $\langle w\lambda, d\rangle \leq \langle \lambda, d\rangle =0$.

 Finally, if $j \in J \backslash J'$ we have $\langle \hat{b}_J(\lambda), \alpha_j^\vee \rangle =0$ while $\langle \hat{b}_{J'}(\lambda), \alpha_j^\vee \rangle >0$, by (\ref{vertvanishing}) and (\ref{vertnonzero}), so $\hat{b}_J(\lambda) \neq \hat{b}_{J'}(\lambda)$.
\end{proof}

\begin{proposition} \label{genvert}
Let $\lambda \in \dom$ be a regular dominant integral weight, and assume that $\langle \lambda, d \rangle =0$. Then the vertices of $D_\lambda$ are contained in the set $\{ \hat{b}_J(\lambda)\}_{J \subsetneq I}$. 
\end{proposition}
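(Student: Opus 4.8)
The plan is to take an arbitrary vertex $v$ of $D_\lambda$ and show it must coincide with $\hat{b}_J(\lambda)$ for the subset $J \subsetneq I$ recording the dominant walls through $v$. By Lemma \ref{charcone} the lineality space of $D_\lambda$ is trivial, so its minimal faces are exactly its vertices; hence it suffices to analyze one vertex through the inequalities of the defining system (the $\langle\,\cdot\,,\alpha_i^\vee\rangle\ge 0$ and $\langle\lambda-\,\cdot\,,x_i\rangle\ge 0$) that are active at it. First I would set $J:=\{i\in I:\langle v,\alpha_i^\vee\rangle=0\}$ and let $P\subseteq I$ be the support of $\lambda-v$, so $\lambda-v=\sum_{i\in P}c_i\alpha_i$ with all $c_i>0$. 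The goal is to prove $P=J$: granting this, $v\in C_J$, while vanishing on $\{\alpha_j^\vee\}_{j\in J}$ together with $\langle v,d\rangle\le 0$ places $v\in\overline{A}_{I\backslash J}$, and I would then check that $\overline{A}_{I\backslash J}\cap C_J$ is the single point $\hat{b}_J(\lambda)$ (the relevant linear system has as its matrix the finite-type Cartan matrix of $J$, which is invertible, and $\hat{b}_J(\lambda)$ solves it by Lemma \ref{vertexdefinition}).

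The equality $P=J$ I would obtain as a two-sided squeeze, $|P|\le|J|$ and $J\subseteq P$, from two genuinely different inputs. For $|P|\le|J|$ I would use that $v$ is a vertex: if $\xi=\sum_{i\in P}\xi_i\alpha_i$ is a nonzero root-lattice vector with $\langle\xi,\alpha_j^\vee\rangle=0$ for all $j\in J$, then for small $\varepsilon>0$ both $v\pm\varepsilon\xi$ still lie in $D_\lambda$: membership in $C$ is preserved because only the coordinates $c_i$ with $i\in P$ get perturbed and these are strictly positive; dominance is preserved because the walls indexed by $J$ are untouched while those outside $J$ are strict at $v$; and the remaining constraint $\langle\,\cdot\,,d\rangle\le 0$ is automatic for points of $C$. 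This would exhibit $v$ as the midpoint of a segment in $D_\lambda$, contradicting extremality, so the linear map $\xi\mapsto(\langle\xi,\alpha_j^\vee\rangle)_{j\in J}$ is injective on $\operatorname{span}\{\alpha_i\}_{i\in P}$, whence $|P|\le|J|$.

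For the reverse inclusion $J\subseteq P$ I would instead invoke regularity of $\lambda$ and the sign pattern of the generalized Cartan matrix. For $j\in J$ we have $\langle\lambda-v,\alpha_j^\vee\rangle=\langle\lambda,\alpha_j^\vee\rangle>0$; expanding gives $\sum_{i\in P}c_i\langle\alpha_i,\alpha_j^\vee\rangle>0$. If $j\notin P$, then every $i\in P$ is distinct from $j$, so each $\langle\alpha_i,\alpha_j^\vee\rangle\le 0$ and the sum is $\le 0$, a contradiction; hence $j\in P$. Thus $J\subseteq P$, and with $|P|\le|J|$ this forces $P=J$. Finally $J\ne I$, since $J=I$ would make $v$ orthogonal to every $\alpha_i^\vee$ and hence give $\langle v,K\rangle=0$, contradicting that $v$ shares the positive level of $\lambda$.

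I expect the main obstacle to be the vertex-to-injectivity step: one must arrange the perturbation $v\pm\varepsilon\xi$ so that it simultaneously respects all three families of constraints cutting out $\overline{A}\cap C$, and in particular verify that confining $\xi$ to $\operatorname{span}\{\alpha_i\}_{i\in P}$ is precisely what staying in $C$ demands (so that the conditions $\langle\,\cdot\,,d\rangle\le 0$ and fixed level require no separate bookkeeping, the former being $-c_0\le 0$ on $C$). Once this correct space of admissible directions is pinned down, the two inclusions fall out cleanly, and the only remaining ingredient is the routine identification of $\overline{A}_{I\backslash J}\cap C_J$ with $\hat{b}_J(\lambda)$ via invertibility of the finite Cartan matrix of $J$.
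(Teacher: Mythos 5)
Your proof is correct, and it takes a genuinely different route from the paper's. The paper argues through the faces of $D_\lambda$: since faces arise by turning subsets of the defining inequalities into equalities, every vertex is a singleton among the intersections $A_{I\backslash H}\cap C_K$ and $\overline{A}_{I\backslash H}\cap C_K$, and a ten-claim case analysis (organized by whether $H\subseteq K$ and whether $0$ lies in $H$ or $K$) shows that any such singleton must be one of the $\hat{b}_H(\lambda)$. You instead take an arbitrary vertex $v$ and identify it directly through the two sets $J$ (active dominance walls at $v$) and $P$ (support of $\lambda-v$): the squeeze $P=J$ --- with $|P|\le|J|$ coming from extremality via the perturbation $v\pm\varepsilon\xi$ (the standard active-constraint criterion for vertices, correctly checked against all three families of constraints, since $\xi$ lies in the span of simple roots and so preserves the level automatically), and $J\subseteq P$ coming from regularity of $\lambda$ together with non-positivity of the off-diagonal entries $\langle\alpha_i,\alpha_j^\vee\rangle$ --- appears nowhere in the paper and gives a conceptual explanation of \emph{why} vertices have the form $\hat{b}_J(\lambda)$. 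Your final step, pinning $v$ down as the unique point of $\overline{A}_{I\backslash J}\cap C_J$ via invertibility of the finite-type Cartan matrix $C_J$, is exactly the argument the paper defers to Corollary \ref{verticesequal}; so your proof in effect delivers Proposition \ref{genvert} and the ``at most one point'' half of that corollary in one pass, whereas the paper's longer enumeration produces finer by-product information (which face intersections are empty, which contain at least two points) that it then reuses. Two things you should state explicitly in a final write-up, both standard: the simple roots of an affine Kac--Moody algebra are linearly independent in $\mathfrak{h}^*_\Q$, which is what converts injectivity of $\xi\mapsto(\langle\xi,\alpha_j^\vee\rangle)_{j\in J}$ on $\operatorname{span}\{\alpha_i\}_{i\in P}$ into the inequality $|P|\le|J|$; and $\langle\lambda,K\rangle>0$ follows from regularity of $\lambda$ since $K=\sum_i a_i^\vee\alpha_i^\vee$ with all $a_i^\vee>0$, which is what rules out $J=I$ and guarantees that $C_J$ is a Cartan matrix of finite (semisimple) type.
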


\begin{proof}
As shown above,    the minimal faces of $D_\lambda$ are points, hence vertices.  We show that they are contained in the set $\{\hat{b}_J(\lambda)\}_{J \subsetneq I}$. This is done by considering the following ten claims.

\begin{clm} \label{C1} If $H \not \subseteq K$, then $\overline{A}_{I\backslash H} \cap C_K = \varnothing$.
\end{clm}

Take $y \in \overline{A}_{I \backslash H} \cap C_K$ and let $y =\lambda -x$ for some $x \in B_K$. Write 
\begin{equation} \label{claim 1}
x=\lambda-y=\lambda-\sum_{h \not \in H} a_h \Lambda_h +b \delta 
\end{equation}
where $x \in \sum_{k \in K} \Q_+ \alpha_k$,  $a_h, b \in \Q_+$. Take $j \in H \backslash K$. Evaluating (\ref{claim 1}) at $\alpha_j^\vee$, we get a contradiction.

\begin{clm} \label{C2} If $H \subsetneq K$ and $0 \not \in K$, then $A_{I \backslash H} \cap C_K$ contains $\hat{b}_H(\lambda)$ and $\hat{b}_K(\lambda)$.
\end{clm}

Clearly $\hat{b}_H(\lambda)$, $\hat{b}_K(\lambda) \in C_K$. By Lemma \ref{vertexdefinition}, we know that $\hat{b}_H(\lambda)$, $\hat{b}_K(\lambda) \in \overline{A}_{I \backslash H}$. Now, since $0 \not \in K$ (and thus also not in $H$), we get that $\langle \hat{b}_K(\lambda), d \rangle = \langle \hat{b}_H(\lambda), d \rangle =0$, so that $\hat{b}_K(\lambda), \hat{b}_H(\lambda) \in A_{I \backslash H}$.

\begin{clm} \label{C3} For any subsets $H, K \subset I$, $A_{I \backslash H} \cap C_K = A_{I \backslash H} \cap C_{K \backslash \{0\} }$.
\end{clm}

This follows easily since for any $x \in A_{I \backslash H}$, $\langle x, d \rangle=0$, whereas $\langle \alpha_0, d \rangle=1$.

\begin{clm} \label{C4} For $H \subsetneq K \subsetneq I$ and $0 \in H$, $\hat{b}_H(\lambda)$, $\hat{b}_K(\lambda) \in \overline{A}_{I \backslash H} \cap C_K$ and $A_{I \backslash H} \cap C_K = \varnothing$.
\end{clm}

Of course, $\hat{b}_H(\lambda)$, $\hat{b}_K(\lambda) \in C_K$, and by Lemma \ref{vertexdefinition}, $\hat{b}_H(\lambda)$, $\hat{b}_K(\lambda) \in \overline{A}_{I \backslash H}$. For the second part, by Claim \ref{C3}, $A_{I \backslash H} \cap C_K = A_{I \backslash H} \cap C_{K \backslash \{0\} } = \varnothing$, by Claim \ref{C1} since $0 \in H$.

\begin{clm} \label{C5} For $H \subsetneq K \subsetneq I$ and $0 \in K \backslash H$, $\hat{b}_H(\lambda)$, $\hat{b}_{K \backslash \{0\} }(\lambda) \in A_{I \backslash H} \cap C_K$, even if $H=K \backslash \{0\}$. 
\end{clm}

By Claim \ref{C3}, $A_{I \backslash H} \cap C_K = A_{I \backslash H} \cap C_{K \backslash \{0\}}$. Further $H \subset K \backslash \{0\}$ since $0 \not \in H$. Thus, the claim follows from Claim \ref{C2}. (Since $0\notin H$, it is easy to see that  $\hat{b}_H(\lambda) \in A_{I \backslash H}$.)

\begin{clm} \label{C6} For $H \subsetneq K =I$ and $H \subset I \backslash \{i_1, i_2\}$,  none of $A_{I \backslash H} \cap C_I$ or $\overline{A}_{I \backslash H} \cap C_I$ contain exactly one point. 
\end{clm}

If $i_1=0$, $A_{I \backslash H} \cap C_I = A_{I \backslash H} \cap C_{I \backslash \{0\} }$ contains at least two points by Claim \ref{C2} and hence so does $\overline{A}_{I \backslash H} \cap C_I$. If neither of $i_1, i_2$ is $0$ (i.e., we can assume that $0 \in H$), then 
$$
A_{I \backslash H} \cap C_I = A_{I \backslash H} \cap C_{I \backslash \{0\}} = \varnothing,
$$
where the equalities follow from Claims \ref{C3} and \ref{C1} respectively. Further,
$$
\overline{A}_{I \backslash H} \cap C_I \supset \overline{A}_{I \backslash H} \cap C_{I \backslash \{i_1\} } \supset \{\hat{b}_H(\lambda), \hat{b}_{I \backslash \{i_1\}}(\lambda) \}$$
by Claim \ref{C4}.

\begin{clm} \label{C7} For $H=I \backslash \{i_0\}$, $K=I$: if $i_0=0$, then $A_{I \backslash H} \cap C_I$ contains the point $\hat{b}_{I \backslash \{0\}}(\lambda)$. If $i_0 \neq 0$, then $A_{I \backslash H} \cap C_I = \varnothing$ and $\overline{A}_{I \backslash H} \cap C_I$ contains the point $\hat{b}_{I \backslash \{i_0\}}(\lambda)$.
\end{clm}

Let $i_0=0$. By Lemma \ref{vertexdefinition}, we have $\hat{b}_{I \backslash \{0\}}(\lambda) \in \overline{A}_{\{0\}}$. But since $H=I \backslash \{0\}$, necessarily $\langle \hat{b}_{I \backslash \{0\}}(\lambda), d \rangle = \langle \lambda, d \rangle =0 $, so in fact $\hat{b}_{I\backslash \{0\}}(\lambda) \in A_{\{0\}}$. Of course, $\hat{b}_{I \backslash \{0\}}(\lambda) \in C_I$. 

For $i_0 \neq 0$, by Claim \ref{C3}, $A_{I \backslash H} \cap C_I = A_{I \backslash H} \cap C_{I \backslash \{0\}}$, and since $0 \in H$, by Claim \ref{C1} this is empty. Now, again by Lemma \ref{vertexdefinition}, $\hat{b}_{I \backslash \{i_0\}}(\lambda) \in \overline{A}_{I \backslash H} \cap C_I$ for $i_0 \neq 0$.

\begin{clm} \label{C8} For any $H$ such that $0 \not \in H$, $A_{I \backslash H} \cap C_H = \overline{A}_{I \backslash H} \cap C_H$ contains the point $\hat{b}_{H}(\lambda)$.
\end{clm}

The identity $A_{I \backslash H} \cap C_H = \overline{A}_{I \backslash H} \cap C_H$ follows since, for any $y \in C_H$, $\langle y, d \rangle=0$. Further, $\hat{b}_H(\lambda) \in C_H$ by construction and $\hat{b}_H(\lambda) \in \overline{A}_{I \backslash H}$ by Lemma \ref{vertexdefinition}.

\begin{clm} \label{C9} For any $H$ such that $0\in H \subsetneq I$, $A_{I \backslash H} \cap C_H = \varnothing$ and $\overline{A}_{I \backslash H} \cap C_H$ contains the point $\hat{b}_H(\lambda)$. 
\end{clm}

$A_{I \backslash H} \cap C_H = A_{I \backslash H} \cap C_{H \backslash \{0\}}$, by Claim \ref{C3}. But by Claim \ref{C1}, this is empty. Finally, $\hat{b}_H(\lambda) \in C_H$ by construction, and by Lemma \ref{vertexdefinition}, $\hat{b}_H(\lambda) \in \overline{A}_{I \backslash H}$.

\begin{clm} \label{C10} $\overline{A}_{I \backslash I} \cap C_I = \varnothing$.
\end{clm}

We need to find $\{ a_i\}_{i \in I}$, $a_i \geq 0$, such that 
$$
\lambda- \sum_{i \in I} a_i \alpha_i \in \Q_+(-\delta).
$$
Now, the above implies $\langle \lambda, \alpha_j^\vee \rangle=\sum_{i \in I} a_i \langle \alpha_i, \alpha_j^\vee \rangle$ for all $\alpha_j^\vee$. But this can be rewritten as 
\begin{equation} \label{claim 10}
\Lambda=A \cdot C,
\end{equation}
where $\Lambda=(\lambda_0, \lambda_1,\lambda_2,\dots,\lambda_l)$, for $\lambda_j := \langle \lambda, \alpha_j^\vee \rangle >0$, $A=(a_0, a_1, \dots, a_l)$, and $C=\left( \alpha_i(\alpha_j^\vee) \right)_{i,j\geq 0}$ is the affine Cartan matrix. But (\ref{claim 10}) is contradicted by \cite{Kac}*{Theorem 4.3}. \\

Combining all the cases covered by Claims 1--10, we get that if any of $A_{I \backslash H} \cap C_K$ or $\overline{A}_{I \backslash H} \cap C_K$ for any $H, K \subseteq I$ is a single point, then that point is one of the points from the set $\{ \hat{b}_H(\lambda)\}_{H \subsetneq I}$. 
\end{proof}

\begin{corollary}  \label{verticesequal} Following the notation  and assumptions as in Proposition \ref{genvert}, 
the set of vertices of $D_\lambda$ is equal to (not just contained in) $\{\hat{b}_H(\lambda)\}_{H \subsetneq I}$.
\end{corollary}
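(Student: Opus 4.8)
The plan is to combine Proposition \ref{genvert}, which already gives the inclusion of the vertex set of $D_\lambda$ into $\{\hat{b}_H(\lambda)\}_{H \subsetneq I}$, with a proof of the reverse inclusion, namely that each $\hat{b}_H(\lambda)$ with $H \subsetneq I$ is genuinely a vertex. Since vertices are exactly the zero-dimensional faces, it suffices to exhibit, for each such $H$, a face of $D_\lambda$ whose only point is $\hat{b}_H(\lambda)$: any single-point face is a zero-dimensional face and hence a vertex. The natural candidate is the face $F_H := \overline{A}_{I \backslash H} \cap C_H$, which is a face of $D_\lambda$ because it arises from the defining inequalities of $D_\lambda$ by turning the appropriate subset into equalities. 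By Claims \ref{C8} and \ref{C9} (handling $0 \notin H$ and $0 \in H$ respectively) we have $\hat{b}_H(\lambda) \in F_H$, so $F_H$ is nonempty; note that using the barred $\overline{A}_{I \backslash H}$ rather than $A_{I \backslash H}$ is what makes this uniform in $H$, since for $0 \in H$ the unbarred intersection is empty.

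The key step is to show that $F_H$ is a singleton. Any $y \in F_H$ lies in $C_H$, so $y = \lambda - \sum_{k \in H} b_k \alpha_k$ for some $b_k \in \Q_+$; and it lies in $\overline{A}_{I \backslash H}$, which forces $\langle y, \alpha_j^\vee \rangle = 0$ for every $j \in H$. Substituting the first expression into these vanishing conditions, the coefficients $(b_k)_{k \in H}$ must satisfy the linear system $\langle \lambda, \alpha_j^\vee \rangle = \sum_{k \in H} b_k \langle \alpha_k, \alpha_j^\vee \rangle$ for $j \in H$, whose coefficient matrix is the principal submatrix $(\langle \alpha_k, \alpha_j^\vee \rangle)_{j,k \in H}$ of the affine Cartan matrix. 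Because $H$ is a \emph{proper} subset of $I$, the corresponding subdiagram of the affine Dynkin diagram is of finite type, so this submatrix is positive definite, and in particular invertible (cf. \cite{Kac}*{Chapter 4}). Hence $(b_k)_{k \in H}$, and therefore $y$, is uniquely determined, giving $F_H = \{\hat{b}_H(\lambda)\}$. Combined with Proposition \ref{genvert}, this yields the claimed equality.

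The main obstacle—really the single structural input on which everything turns—is the invertibility of this parabolic Cartan submatrix, equivalently the fact that every proper subdiagram of the affine Dynkin diagram is of finite type. This is precisely what distinguishes $H \subsetneq I$ from $H = I$: for the full set $H = I$ the affine Cartan matrix is singular, the analogous system has no solution (this is exactly the content of Claim \ref{C10}, via \cite{Kac}*{Theorem 4.3}), and no corresponding vertex exists. This is the reason the indexing set in the statement is $H \subsetneq I$, and also, implicitly, why the characteristic cone $\Q_+(-\delta)$ of Lemma \ref{charcone} accounts for the one ``missing'' direction corresponding to the kernel of the affine Cartan matrix.
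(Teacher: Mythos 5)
Your proposal is correct and is essentially identical to the paper's own proof: both show that the face $\overline{A}_{I\backslash H}\cap C_H$ contains $\hat{b}_H(\lambda)$ by Claims \ref{C8} and \ref{C9}, and is at most a single point because the conditions $\langle y,\alpha_j^\vee\rangle=0$ for $j\in H$ together with $y\in C_H$ yield a linear system whose coefficient matrix is the (finite-type, hence invertible) Cartan submatrix indexed by the proper subset $H$. Your added remarks—that a singleton face is a vertex, and that properness of $H$ is exactly what fails for $H=I$ via Claim \ref{C10}—are accurate framing of the same argument.
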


\begin{proof}  Take any $H\subsetneq I$ and  let $\mu \in  \overline{A}_{I \backslash H} \cap C_H $. Then, we can write
$$\mu=\sum_{i\notin H}m_i\Lambda_i -m\delta=\lambda -\sum_{k\in H}n_k\alpha_k,\,\,\,\text{for some  $m_i, m, n_k\in \Q_+$}.$$
Evaluating the above at $\alpha_i^\vee$, for $i\in H$, we get:
\begin{equation} \label{eqn3.10.1}
\langle \lambda, \alpha_i^\vee\rangle -\sum_{k\in H} n_k \langle \alpha_k, \alpha_i^\vee\rangle=0, \,\,\,\text{for any $i\in H$}.
\end{equation}
Consider the row matrices
$\vec{\lambda}_H := \left(\langle \lambda, \alpha_i^\vee\rangle\right)_{i\in H}; \,\, \vec{n}:= \left(n_k\right)_{k\in H}$
and the $|H|\times |H|$-matrix $C_H:= \left(\langle \alpha_k, \alpha_i^\vee\rangle\right)_{k,i\in H}$. Since $H$ is a proper subset of $I$, $C_H$ is the Cartan matrix of a semisimple Lie algebra. In particular, it is invertible. Hence the above equation (\ref{eqn3.10.1}) gives
$$\vec{\lambda}_H = \vec{n}  \cdot C_H,\,\,\text{i.e.,}\,\,       \vec{\lambda}_H\cdot C_H^{-1}= \vec{n}.$$
Thus, $\vec{n}$ is unique if it exists satisfying $n_k\in \Q_+$ for all $k\in H$. This shows that  $ \overline{A}_{I \backslash H} \cap C_H $ has at most one point. However, as shown in the Claims \ref{C8} and \ref{C9} of the proof of Proposition \ref{genvert}, it has at least the point 
 $\hat{b}_H(\lambda)$. This proves the lemma.
\end{proof}

Specializing to $\lambda=2\rho$, we can simplify the above to get the following corollary. 

\begin{corollary} \label{2rhovert}
The vertices of $D_{2\rho}$ are given by the set $\{\hat{b}_J(2\rho)\}_{J \subsetneq I}$, where
$$
\hat{b}_J(2\rho)=\rho+w_0^J(\rho),
$$
with $w_0^J \in W_J$ the longest element.
\end{corollary}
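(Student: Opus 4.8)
The plan is to apply the general vertex formula $\hat{b}_J(\lambda) = \frac{1}{|W_J|}\sum_{w \in W_J} w(\lambda)$ from Corollary \ref{verticesequal} to the special case $\lambda = 2\rho$, and to show that this average collapses to the clean expression $\rho + w_0^J(\rho)$. First I would note that $2\rho$ is regular dominant with $\langle 2\rho, d\rangle = 0$ (since $\rho = \sum_i \Lambda_i$ and $\langle \rho, d\rangle = 0$ by our conventions), so the hypotheses of Corollary \ref{verticesequal} are satisfied and the vertices of $D_{2\rho}$ are exactly $\{\hat{b}_J(2\rho)\}_{J \subsetneq I}$. It then remains only to verify the identity $\hat{b}_J(2\rho) = \rho + w_0^J(\rho)$.

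To establish this identity, I would use the standard description of $\rho$ restricted to the parabolic data associated to $J$. The key observation is that within the finite parabolic subgroup $W_J$, the element $\rho$ behaves like a Weyl vector for the root subsystem generated by $\{\alpha_j\}_{j \in J}$, and summing $w(2\rho)$ over $W_J$ can be handled by the following classical fact for finite reflection groups: for any $\mu$, the average $\frac{1}{|W_J|}\sum_{w \in W_J} w(\mu)$ equals the projection of $\mu$ onto the subspace fixed by $W_J$. The cleanest route, however, is to exploit a known closed form: I expect the computation to reduce to the statement that $\frac{1}{|W_J|}\sum_{w\in W_J} w(2\rho)$ can be split using $2\rho = \rho + \rho$ together with the fact that $W_J$ acts on one copy of $\rho$ by averaging to the $W_J$-fixed part, while the contribution involving $w_0^J$ accounts for the part of $\rho$ that is genuinely moved by $W_J$.

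More concretely, I would write $2\rho = \rho + \rho$ and split the average accordingly; the main task is to show $\frac{1}{|W_J|}\sum_{w \in W_J} w(\rho) = \tfrac{1}{2}(\rho + w_0^J(\rho))$. This is the heart of the matter and the step I expect to be the main obstacle, since it requires an explicit understanding of how $W_J$ permutes the roots appearing in $\rho$. The natural tool is the decomposition of $\rho$ into its $W_J$-fixed part and the half-sum $\rho_J$ of positive roots lying in the span of $\{\alpha_j\}_{j \in J}$: one has $\rho = (\rho - \rho_J) + \rho_J$ where $\rho - \rho_J$ is $W_J$-invariant, so the average fixes it, and the remaining computation is $\frac{1}{|W_J|}\sum_{w \in W_J} w(\rho_J)$. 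Using that $w_0^J(\rho_J) = -\rho_J$ (the longest element of $W_J$ negates the half-sum of $J$-positive roots) and the symmetry of $W_J$, pairing each $w$ with $w_0^J w$ shows this average is zero, giving $\frac{1}{|W_J|}\sum_{w \in W_J} w(\rho) = \rho - \rho_J$.

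Finally I would reconcile this with the target formula: since $w_0^J(\rho) = w_0^J(\rho - \rho_J) + w_0^J(\rho_J) = (\rho - \rho_J) - \rho_J$, we obtain $\rho + w_0^J(\rho) = 2(\rho - \rho_J) = 2\hat{b}_J(\rho)/1$, and therefore $\hat{b}_J(2\rho) = 2(\rho - \rho_J) = \rho + w_0^J(\rho)$, which is exactly the claimed expression. The only genuinely delicate point is the identity $w_0^J(\rho_J) = -\rho_J$ and the pairing argument that kills the $\rho_J$-average; everything else is bookkeeping with the $W_J$-invariance of $\rho - \rho_J$ and the definition of $\hat{b}_J$.
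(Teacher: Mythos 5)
Your argument is correct and, while it shares its key combinatorial ingredient with the paper (the involution of $W_J$ given by the longest element $w_0^J$), it organizes the computation differently. The paper writes $\hat{b}_J(2\rho) = 2\rho + \frac{2}{|W_J|}\sum_{w\in W_J}\left(w(\rho)-\rho\right)$ and evaluates the sum via inversion sets: $w(\rho)-\rho = -\sum_{\beta\in\Phi_J(w)}\beta$, and each $\beta \in \Phi_J^+$ lies in exactly half of the sets $\Phi_J(w)$, giving $\hat{b}_J(2\rho) = 2\rho - \sum_{\beta\in\Phi_J^+}\beta = \rho + w_0^J(\rho)$ without ever naming $\rho_J$. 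You instead split $\rho = (\rho-\rho_J)+\rho_J$, use that $\rho-\rho_J$ is $W_J$-fixed, and show the $W_J$-average of $\rho_J$ vanishes; since $\sum_{\beta\in\Phi_J^+}\beta = 2\rho_J$, the two computations are equivalent, but yours is more conceptual (averaging is projection onto the $W_J$-fixed subspace) while the paper's is more self-contained. One point needs fixing: the cancelling pairs must be $\{w,\, w\,w_0^J\}$ (right multiplication), since then $w\,w_0^J(\rho_J)=w(-\rho_J)=-w(\rho_J)$; with your stated pairing $\{w,\, w_0^J w\}$ the two terms are $w(\rho_J)$ and $w_0^J\!\left(w(\rho_J)\right)$, which need not cancel---for instance, for $J$ of type $A_2$ and $w=s_1$ they are $\alpha_2$ and $-\alpha_1$. (Alternatively, you can avoid pairings: $S:=\sum_{w\in W_J}w(\rho_J)$ is $W_J$-invariant and lies in the span of $\{\alpha_j\}_{j\in J}$, hence $\langle S,\alpha_j^\vee\rangle=0$ for all $j\in J$, and invertibility of the finite Cartan matrix---the same fact used in the proof of Corollary \ref{verticesequal}---forces $S=0$.) Finally, your explicit check that $2\rho$ is regular dominant with $\langle 2\rho, d\rangle=0$ before invoking Corollary \ref{verticesequal} for the vertex statement is a point the paper leaves implicit, and is worth keeping.
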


\begin{proof}
By definition, we have 
$$
\hat{b}_J(2\rho)= \frac{1}{|W_J|} \sum_{w \in W_J} w(2\rho).
$$
We rewrite this as 
$$
\hat{b}_J(2\rho) = 2\rho+ \frac{1}{|W_J|} \sum_{w \in W_J} \left( w(2\rho) -2\rho \right) = 2\rho + \frac{2}{|W_J|} \sum_{w \in W_J} w(\rho)-\rho.
$$
Now, we have 
$$
w(\rho)-\rho = -\sum_{\beta \in \Phi_J(w)} \beta,
$$
where $\Phi_J(w):=\{\beta \in \Phi^+_J: w^{-1}(\beta) \in -\Phi^+_J\}$ and $\Phi^+_J$ are the positive roots in the sub-root system corresponding to $J$. Every root $\beta \in \Phi^+_J$ appears in precisely half of the sets $\Phi_J(w)$ as $w$ varies across $W_J$ (since for any $\beta\in \Phi^+_J$ and $w\in W_J$, $\beta$ belongs to exactly one of $\Phi_J(w)$ or $\Phi_J(ww^J_0)$); thus we get 
$$
\hat{b}_J(2\rho) = 2\rho -\frac{2}{|W_J|} \sum_{\beta \in \Phi^+_J} \frac{|W_J|}{2} \beta = 2\rho-\sum_{\beta \in \Phi^+_J} \beta = 2\rho+(w_0^J(\rho)-\rho) = \rho+w_0^J(\rho).
$$ 
\end{proof}

Finally, by the decomposition theorem for polyhedra in terms of minimal faces and the characteristic cone (cf. \cite{Sch}*{Section 8.9}), we get the following crucial proposition. 

\begin{proposition} \label{dompoly} For any $\lambda \in \dom$ regular dominant, the dominant weight polyhedron $D_\lambda$ decomposes as 
$$
D_\lambda = \op{conv}_\Q \left(\{\hat{b}_J(\lambda)\}_{J \subsetneq I}\right) + \Q_+(-\delta).
$$
\end{proposition}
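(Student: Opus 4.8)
The plan is to invoke the general decomposition theorem for polyhedra directly, so the main work is simply assembling the three ingredients the theorem requires and checking they are already in hand. By \cite{Sch}*{Section 8.9}, any polyhedron $P$ admits a decomposition $P = \op{conv}_\Q(V) + \op{char.cone}(P)$, where $V$ is the (finite) set of minimal faces of $P$; when the lineality space is trivial, the minimal faces are exactly the vertices, and $V$ is the vertex set. So the proof reduces to citing this theorem and substituting the two quantities we have already computed for $D_\lambda$.

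First I would recall from the discussion preceding Lemma \ref{charcone} that, because $\op{lin. space}(D_\lambda) = \op{char. cone}(D_\lambda) \cap -\op{char. cone}(D_\lambda) = \{0\}$ (a consequence of Lemma \ref{charcone}, which gives $\op{char.cone}(D_\lambda) = \Q_+(-\delta)$, a pointed cone), the minimal faces of $D_\lambda$ are zero-dimensional, i.e.\ vertices. Next I would feed in Corollary \ref{verticesequal}, which identifies the vertex set precisely as $\{\hat{b}_J(\lambda)\}_{J \subsetneq I}$, a finite set. Thus the finitely-generated polytope part of the decomposition is $\op{conv}_\Q\left(\{\hat{b}_J(\lambda)\}_{J \subsetneq I}\right)$.

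Finally I would substitute the characteristic cone $\op{char.cone}(D_\lambda) = \Q_+(-\delta)$ from Lemma \ref{charcone} as the conical summand. Combining, the decomposition theorem yields exactly
\begin{equation*}
D_\lambda = \op{conv}_\Q\left(\{\hat{b}_J(\lambda)\}_{J \subsetneq I}\right) + \Q_+(-\delta),
\end{equation*}
as claimed. The one point requiring a moment's care is that the cited decomposition theorem is usually stated for polyhedra defined over $\R$, so I would confirm that $D_\lambda$ is a \emph{rational} polyhedron (which it is, being cut out by the rational inequalities listed after the definition of $D_\lambda$) and that the rational version of \cite{Sch}*{Section 8.9} applies, so that the convex hull and the cone may both be taken over $\Q$.

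I do not expect a genuine obstacle here: essentially all of the substantive work has already been carried out in Lemma \ref{charcone} (the characteristic cone) and Corollary \ref{verticesequal} (the vertices), so this proposition is a clean application of the structure theorem. If anything, the only subtlety is bookkeeping — making sure the hypotheses of the polyhedral decomposition theorem (rationality, pointedness giving vertices rather than higher-dimensional minimal faces) are explicitly matched to the objects at hand.
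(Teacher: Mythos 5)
Your proposal is correct and follows exactly the paper's argument: the paper likewise deduces Proposition \ref{dompoly} as an immediate application of the decomposition theorem of \cite{Sch}*{Section 8.9}, combining $\op{char.cone}(D_\lambda)=\Q_+(-\delta)$ from Lemma \ref{charcone} (hence trivial lineality space, so minimal faces are vertices) with the vertex set $\{\hat{b}_J(\lambda)\}_{J \subsetneq I}$ from Corollary \ref{verticesequal}. Your added check that $D_\lambda$ is a rational polyhedron, so the decomposition may be taken over $\Q$, is a reasonable bookkeeping point that the paper leaves implicit.
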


\begin{remark} \label{coneapproach} In the case of a finite-dimensional semisimple Lie algebra, the vertices of the corresponding dominant weight polytope for $2\rho$ were worked out by Kostant, as reported in \cite{BZ}. For any dominant weight, formulas for the vertices resembling those in Proposition \ref{genvert} were given in \cite{BJK}. In this case, the vertices completely determine the polytope, as it is compact. While we will not need it here, we conjecture the following stronger results.
\end{remark}

\begin{conjecture} 
Let $\lambda \in \dom$ be an arbitrary (not necessarily regular) dominant integral weight with positive level. Then the vertices of $D_\lambda$ are given precisely by the set $\{\hat{b}_J(\lambda)\}_{J \subsetneq I}$ (which may have repeated elements), and $D_\lambda$ decomposes as 
$$
D_\lambda = \op{conv}_\Q \left(\{\hat{b}_J(\lambda) \}_{J \subsetneq I} \right) + \Q_+ (-\delta).
$$
\end{conjecture}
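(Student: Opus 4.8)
The plan is to show that the vertex set of $D_\lambda$ equals $\{\hat{b}_J(\lambda)\}_{J\subsetneq I}$ as a set of points, from which the decomposition is automatic, and to observe that essentially every ingredient of the regular case survives: regularity is used in exactly one place, which a support‑counting argument replaces. As before we may assume $\langle\lambda,d\rangle=0$, and positive level guarantees $S:=\{i\in I:\langle\lambda,\alpha_i^\vee\rangle=0\}\subsetneq I$. First I would note that Lemma \ref{charcone} is stated and proved for arbitrary $\lambda\in\dom$, so $\op{char.cone}(D_\lambda)=\Q_+(-\delta)$ and the lineality space is trivial with no use of regularity; hence the minimal faces of $D_\lambda$ are vertices and, by the decomposition theorem \cite{Sch}*{Section 8.9}, $D_\lambda=\op{conv}_\Q(\op{Vert}(D_\lambda))+\Q_+(-\delta)$. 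Thus it suffices to prove $\op{Vert}(D_\lambda)=\{\hat{b}_J(\lambda)\}_{J\subsetneq I}$, the asserted decomposition then being immediate.

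For the inclusion $\{\hat{b}_J(\lambda)\}\subseteq\op{Vert}(D_\lambda)$, the argument of Corollary \ref{verticesequal} is valid without the regularity hypothesis: it uses only that $\hat{b}_J(\lambda)\in\overline{A}_{I\backslash J}\cap C_J$ (from Lemma \ref{vertexdefinition}, whose relevant computations give ``$\ge 0$'' in place of ``$>0$'' and remain correct) together with invertibility of the Cartan matrix $(\langle\alpha_k,\alpha_i^\vee\rangle)_{k,i\in J}$, which holds for every proper $J\subsetneq I$ since deleting a node from an affine Dynkin diagram yields a finite-type diagram. Hence $\overline{A}_{I\backslash J}\cap C_J=\{\hat{b}_J(\lambda)\}$ is a zero-dimensional face, so each $\hat{b}_J(\lambda)$, $J\subsetneq I$, is a vertex.

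The reverse inclusion is the crux, and is where the singular behaviour enters. Let $v$ be a vertex, put $H:=\{i\in I:\langle v,\alpha_i^\vee\rangle=0\}$, and let $K\subseteq I$ be the exact support of $\lambda-v=\sum_{k\in K}c_k\alpha_k$ with all $c_k>0$. By construction $v\in\overline{A}_{I\backslash H}\cap C_K$ (for the membership in $\overline{A}_{I\backslash H}$ use $\langle v,d\rangle\le 0$, which holds since $v\le\lambda$ in dominance order after scaling), and one checks $H\subsetneq I$ (else $v\in\C\delta$ has level $0$) and $K\subsetneq I$ (else $v-s\delta$ lies in the face for all $s\ge0$, contradicting that $v$ is a vertex). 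It suffices to prove $K\subseteq H$: then $C_K\subseteq C_H$ forces $v\in\overline{A}_{I\backslash H}\cap C_H=\{\hat{b}_H(\lambda)\}$ by the previous paragraph, so $v=\hat{b}_H(\lambda)$. Since $v$ is a vertex, the affine solution set of the active equalities, parametrized by $x=\lambda-\sum_{k\in K}c_k\alpha_k$ subject to $\langle x,\alpha_i^\vee\rangle=0\ (i\in H)$, is the single point $v$; equivalently the $|H|\times|K|$ matrix $M=(\langle\alpha_k,\alpha_i^\vee\rangle)_{i\in H,\,k\in K}$ has rank $|K|$. Now for $i\in H\backslash K$ the equation at $v$ reads $\sum_{k\in K}c_k\langle\alpha_k,\alpha_i^\vee\rangle=\langle\lambda,\alpha_i^\vee\rangle$; as the left side is $\le 0$ (off-diagonal Cartan entries are $\le 0$ and $c_k>0$), we get $\langle\lambda,\alpha_i^\vee\rangle\le0$, hence $=0$, i.e.\ $i\in S$ — this is exactly the place regularity was used, in Claim \ref{C1}. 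The right side then being $0$ forces every $\langle\alpha_k,\alpha_i^\vee\rangle=0$, so the rows of $M$ indexed by $H\backslash K$ vanish and $\op{rank}(M)\le|H\cap K|$. Combined with $\op{rank}(M)=|K|$ this gives $K\subseteq H$, as needed. Therefore $\op{Vert}(D_\lambda)=\{\hat{b}_J(\lambda)\}_{J\subsetneq I}$ (a possibly non-injective indexing, accounting for repeated elements), and the decomposition follows.

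The \textbf{main obstacle} is precisely the mechanism of the last paragraph: in the singular case a vertex may lie on \emph{extra} walls $\alpha_i^\vee$ with $i\in S\backslash K$ not appearing in the support of $\lambda-v$, so the clean dichotomy of Proposition \ref{genvert} (governed by Claim \ref{C1}) breaks down. The content is that such extra walls are forced to be Dynkin-orthogonal to $K$, hence contribute no equations toward isolating $v$, so the support walls alone must do so; verifying the affine-hull and dimension bookkeeping underlying the rank statement — and that the faces of $D_\lambda$ are exactly the intersections $\overline{A}_{I\backslash H}\cap C_K$ even when $\lambda$ is singular — is the part demanding genuine care.
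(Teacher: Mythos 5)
You should first note an unusual feature of this situation: the paper does \emph{not} prove this statement. It is left as an open conjecture (stated right after Remark \ref{coneapproach}), and the paper only proves the regular case via Proposition \ref{genvert}, Corollary \ref{verticesequal} and Proposition \ref{dompoly}. So there is no proof in the paper to compare against; what you have written is, as far as I can verify, a correct proof of the conjecture. Your skeleton follows the paper's regular-case machinery, and you correctly identify which pieces survive: Lemma \ref{charcone} and its proof never use regularity; the identity (\ref{eqn3.3.1}) holds for singular $\lambda$ of positive level (this is used implicitly in your argument and deserves an explicit line — it follows from Proposition \ref{convhull} applied to $N\lambda$ after clearing denominators, and is where positive level enters); and the Cartan-invertibility argument of Corollary \ref{verticesequal} shows verbatim that each $\overline{A}_{I \backslash J} \cap C_J$, $J \subsetneq I$, is the single-point face $\{\hat{b}_J(\lambda)\}$, hence each $\hat{b}_J(\lambda)$ is a vertex. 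Your genuinely new contribution is the reverse inclusion, where the paper's route breaks down: Claim \ref{C1} is simply false for singular $\lambda$ (for $A_1^{(1)}$ and $\lambda=\Lambda_0$, the point $\Lambda_0$ lies in $\overline{A}_{\{0\}} \cap C_\emptyset$ although $\{1\} \not\subseteq \emptyset$), so the case analysis of Claims 1--10 cannot be repaired directly. You replace it by two standard polyhedral facts — a single-point face is a vertex, and at a vertex the tight constraints admit $v$ as their \emph{unique} solution — combined with the sign argument: for $i \in H \backslash K$, dominance of $\lambda$ and nonpositivity of off-diagonal Cartan entries force $\langle \lambda, \alpha_i^\vee\rangle = 0$ and $\langle \alpha_k, \alpha_i^\vee \rangle = 0$ for all $k \in K$, so those rows of $M$ vanish, the rank condition forces $K \subseteq H$, and then $v \in \overline{A}_{I\backslash H} \cap C_H = \{\hat{b}_H(\lambda)\}$. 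I checked this mechanism on singular low-rank examples and it is sound. Two small repairs: your justification of $K \subsetneq I$ is garbled (cleaner: if $K = I$ then $v \pm \epsilon\delta \in D_\lambda$ for small $\epsilon > 0$, contradicting extremality; alternatively $K \subsetneq I$ follows a posteriori from $K \subseteq H \subsetneq I$), and your closing worry about whether the faces of $D_\lambda$ are \emph{exactly} the intersections $\overline{A}_{I\backslash H} \cap C_K$ is unnecessary — your argument only needs that these intersections are faces (clear, since they are cut out by turning defining inequalities into equalities) and the active-constraint fact at a vertex. With those points tidied, this settles the paper's conjecture and is worth writing up in full.
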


\section{Tensor product of irreducible representations and the Goddard--Kent--Olive construction} \label{Tensor}

In this section, we will review some of the key features of the tensor decomposition problem, which seeks to understand the irreducible components of $V(\lambda)\otimes V(\mu)$ for $\lambda$, $\mu \in \dom$. We will also briefly introduce an action of the Virasoro algebra on $V(\lambda) \otimes V(\mu)$, given by the \textit{Goddard--Kent--Olive} (or GKO) construction, and its application to the tensor decomposition problem. 

Throughout, we will take $\lambda$, $\mu \in \dom$ with $\lambda(d)=\mu(d)=0$, without loss of generality, as we could replace $\lambda$ (and similarly $\mu$) up to twisting by $k\delta$ via $V(\lambda) \otimes V(k\delta)=V(\lambda+k\delta)$.

\subsection{The tensor semigroup} \label{semigroup}
We return now to the tensor decomposition problem, as discussed in the introduction. To this end, consider $\lambda, \mu \in \dom$ and the tensor product decomposition 
$$
V(\lambda) \otimes V(\mu) \cong \bigoplus_{\nu \in \dom} V(\nu)^{m_{\lambda, \mu}^\nu},
$$
where $m_{\lambda, \mu}^\nu \in \Z_+$ is the multiplicity of the subrepresentation $V(\nu)$ in $V(\lambda) \otimes V(\mu)$. While the problem of determining for which $\nu$ we have $m_{\lambda, \mu}^\nu \neq 0$ (or in fact determining $m_{\lambda, \mu}^\nu$ precisely) has a long history which has developed exact methods to give exact solutions (crystals, the Littelmann path model, among others), predicting components which appear in $V(\lambda)\otimes V(\mu)$ based only off of the pair $(\lambda, \mu)$ is challenging. One such example of components, known as the Parthasarathy--Ranga Rao--Varadarajan or PRV components, is given in the following theorem originally due to Kumar \cite{Ku0} for semisimple Lie algebras. Its extension to any symmetrizable Kac-Moody Lie algebra was proved in \cite{Ku1} and Mathieu \cite{Mat}. 

\begin{theorem} \label{PRV} 
Let $\lambda$, $\mu \in \dom$ be two dominant integral weights and $w$, $v \in W$ two Weyl group elements such that $\nu:=w\lambda+v\mu \in \dom$. Then $V(\nu) \subset V(\lambda) \otimes V(\mu)$.
\end{theorem}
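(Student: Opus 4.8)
The plan is to produce the component by an explicit extremal weight vector and then detect it. Since $\nu \in \dom$ is dominant, $V(\nu)$ occurs in $V(\lambda) \otimes V(\mu)$ if and only if there is a nonzero highest weight vector of weight $\nu$, i.e. $\op{Hom}_{\mathfrak{g}}(V(\nu), V(\lambda)\otimes V(\mu)) \neq 0$. The natural candidate is the tensor $u_{w\lambda}\otimes u_{v\mu}$, where $u_{w\lambda}$ spans the one-dimensional extremal weight space $V(\lambda)_{w\lambda}$ and $u_{v\mu}$ spans $V(\mu)_{v\mu}$; it has weight $w\lambda + v\mu = \nu$. The content of the theorem is exactly that the projection of this vector to the $V(\nu)$-isotypic component of $V(\lambda)\otimes V(\mu)$ is nonzero.

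First I would pass to the dual Borel--Weil picture (this is the route of Kumar \cite{Ku0,Ku1}; Mathieu \cite{Mat} gives an independent argument). For the Kac--Moody group $G$ with flag variety $G/B$ one has $H^0(G/B, \mathcal{L}_\lambda) \cong V(\lambda)^\ast$ with all higher cohomology vanishing, and the torus-fixed point $wB$ carries the extremal weight $w\lambda$. Working on $G/B \times G/B$ with the external product $\mathcal{L}_\lambda \boxtimes \mathcal{L}_\mu$ (whose restriction to the diagonal is $\mathcal{L}_{\lambda+\mu}$), the multiplicity space $\op{Hom}_{\mathfrak{g}}(V(\nu), V(\lambda)\otimes V(\mu))$ is realized inside a space of sections whose non-vanishing is governed by the Schubert variety $X_w$ and the opposite Schubert variety $X^v$, whose intersection isolates the fixed point producing the weight $\nu$.

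The heart of the argument — and the step I expect to be the main obstacle — is the non-vanishing of this section. The key inputs are (i) the identification of the restriction $H^0(G/B, \mathcal{L}_\lambda) \to H^0(X_w, \mathcal{L}_\lambda)$ with the surjection onto the dual Demazure module $V_w(\lambda)^\ast$, and (ii) a transversality/normal-crossing statement for $X_w$ and $X^v$ along the relevant fixed point. Both follow from the characteristic-free cohomology-vanishing and Frobenius-splitting technology for (Kac--Moody) Schubert varieties. This is precisely where the hypothesis $\nu \in \dom$ enters: dominance of the vertex $\nu$ is what guarantees that the constructed section does not vanish identically. In the affine (more generally symmetrizable Kac--Moody) setting the flag variety is infinite-dimensional, so one cannot conclude by a naive dimension count and must instead work through Bott--Samelson resolutions and the vanishing theorems on them, which is the genuinely hard input.

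As a purely combinatorial alternative, I would use Littelmann's path model: the crystal $B(\lambda)\otimes B(\mu)$ is realized by concatenations of LS paths, and the extremal data $(w,v)$ give distinguished paths ending at $w\lambda$ and $v\mu$. One then exhibits, using the dominance of $\nu$, a highest weight (dominant) path in the tensor crystal with endpoint $\nu$, which directly produces a highest weight vector of weight $\nu$. Either way, the crux reduces to the same positivity statement — that dominance of $\nu$ forces the candidate extremal vector to survive — and this is the part requiring real work, the rest being formal.
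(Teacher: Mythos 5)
A preliminary point: the paper contains no proof of Theorem \ref{PRV}. It is stated as a known result --- due to Kumar \cite{Ku0} in the semisimple case, and to Kumar \cite{Ku1} and Mathieu \cite{Mat} for arbitrary symmetrizable Kac--Moody algebras --- and is then used as a black box, its only role being to produce the components $V(c'_J\hat{b}_J(2\rho))\subset V(c'_J\rho)\otimes V(c'_J\rho)$ in the proof of Theorem \ref{maintheorem}. So the only meaningful comparison is with those cited proofs, and against that standard your proposal has a genuine gap: it is a roadmap to the literature, not a proof. You set up the correct candidate vector $u_{w\lambda}\otimes u_{v\mu}$ and correctly name the two known strategies (the geometric argument of Kumar and the characteristic-$p$/Frobenius-splitting argument of Mathieu, and, alternatively, Littelmann's path model), but in both branches you explicitly defer the decisive step --- the non-vanishing of the constructed section, respectively the exhibition of a dominant path in $B(\lambda)\otimes B(\mu)$ with endpoint $\nu$ --- calling it ``the part requiring real work, the rest being formal.'' That deferred step is not a verification left to the reader; it \emph{is} the PRV theorem. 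Nothing in your text supplies an argument for it, so no proof has been given.

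Two smaller inaccuracies are worth flagging as well. First, your claim that the theorem is ``exactly'' the non-vanishing of the projection of $u_{w\lambda}\otimes u_{v\mu}$ onto the $V(\nu)$-isotypic component overstates an equivalence: by complete reducibility of the tensor product, that non-vanishing is equivalent to $V(\nu)$ occurring inside the cyclic submodule $U(\mathfrak{g})\cdot\left(u_{w\lambda}\otimes u_{v\mu}\right)$, which is a strictly stronger assertion than $\operatorname{Hom}_{\mathfrak{g}}\left(V(\nu),V(\lambda)\otimes V(\mu)\right)\neq 0$. Proving the stronger statement is a legitimate strategy (it is essentially the refined form the cited proofs establish), but the asserted equivalence is false as stated. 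Second, your geometric setup is off: the intersection $X_w\cap X^v$ is nonempty only when $v\le w$ in the Bruhat order, and nothing in the hypotheses relates $w$ and $v$; what actually enters the cited arguments is not this intersection but the closure of the diagonal $G$-orbit through the $T$-fixed point $(wB,vB)$ of $G/B\times G/B$, a twisted product $G\times_B X$ of a Schubert variety attached to the relative position $w^{-1}v$, on which the Demazure-module surjectivity and vanishing (or splitting) technology is brought to bear. As it stands, your write-up would serve as a readable introduction to \cite{Ku0}, \cite{Ku1}, \cite{Mat}, but it cannot replace the citation, let alone constitute an independent proof.
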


As above, we are interested in the set of triples $\{(\lambda, \mu; \nu) \in (\dom)^3$: $m_{\lambda, \mu}^\nu \neq 0$\}. This set in fact forms a semigroup, which we refer to as the tensor semigroup for $\mf[g]$. This follows, for example, from the analogue of the Borel--Weil theorem for symmetrizable  Kac--Moody Lie algebras (\cite{Ku3}*{Corollary 8.3.12}). The tensor semigroup and its generators are difficult to understand, even in the finite-dimensional semisimple case. We consider a simpler  object, known as the saturated tensor semigroup or saturated tensor cone, in the following definition. 

\begin{definition} \label{cone}
The saturated tensor cone associated to $\mf[g]$, denoted $\Gamma(\mf[g])$, is given by 
$$
\Gamma(\mf[g]):=\{(\lambda, \mu; \nu) \in (\dom_\Q)^3: \exists N \geq 1 \text{with } m_{N\lambda, N\mu}^{N\nu} \neq 0 \};
$$
that is, $V(N\nu) \subset V(N \lambda) \otimes V(N\mu)$ for some $N \geq 1$, where
$$\dom_{\Q} :=\{\lambda\in \mf[h]^*: \langle \lambda, \alpha_i^\vee\rangle \in \Q_+ \,\forall i\in I\}.$$ 
\end{definition}

For a finite-dimensional semisimple Lie algebra $\mathring{\mf[g]}$, the saturated tensor cone $\Gamma(\mathring{\mf[g]})$ has been studied extensively using geometric methods; we refer to the survey \cite{Ku2}. In this case, $\Gamma(\mathring{\mf[g]})$ is a closed polyhedral convex cone with inequalities determined by data coming from the cohomology of the associated flag varieties. An important notion in this topic is that of a saturation factor for $\Gamma(\mathring{\mf[g]})$, which we define below. 

\begin{definition} \label{satfactor}
We say that a positive integer $d \geq 1$ is a saturation factor of $\mathring{\mf[g]}$ (or equivalently of $\Gamma(\mathring{\mf[g]})$) if, for any $(\lambda, \mu; \nu) \in \Gamma(\mathring{\mf[g]})\cap (\dom)^3$, we have $V(d \nu) \subset V(d \lambda) \otimes V(d \mu)$. 
\end{definition}

For example, the saturation theorem of Knutson and Tao \cite{KT} says that $d=1$ is a saturation factor for $\Gamma(sl_{n+1})$. The saturation conjecture of Kapovich and Millson \cite{KM} posits that $d=1$ should also be a saturation factor for any  simply-laced simple Lie algebra; it is known that the minimal saturation factor for the non-simply laced types satisfy $d > 1$. 

When $\mf[g]$ is an affine Kac--Moody Lie algebra, $\Gamma(\mf[g])$ is no longer closed nor polyhedral (the semigroup is infinitely generated); however, a similar set of inequalities governing the points of $\Gamma(\mf[g])$ were conjectured by Brown and Kumar \cite{BK} and later proven by Ressayre \cite{Res} in this setting. Further, it is not a priori clear that a saturation factor $d$ (analogously defined as in Definition \ref{satfactor}) should exist for $\Gamma(\mf[g])$. Nevertheless, in loc. cit. Ressayre provides saturation factors in this setting, again building on the work of Brown and Kumar, who computed the saturation factors for $\mf[g]=A_1^{(1)}$. In the following  sections, we will rely only on the existence of a saturation factor for $\Gamma(\mf[g])$, and not make explicit use of their specific values apart from the case of $d=1$ for $\Gamma(sl_{n+1})$. 

Finally, we recall the definition of a $\delta$-maximal component $V(\nu) \subset V(\lambda) \otimes V(\mu)$, taken from \cite{BK}*{Definition 2.2} or \cite{JK}*{Definition 2.2}. 

\begin{definition} 
Let $V(\nu) \subset V(\lambda) \otimes V(\mu)$ be an irreducible component of the tensor product. We say that $V(\nu)$ is $\delta$-maximal if $V(\nu+k\delta) \not \subset V(\lambda) \otimes V(\mu)$ for any $k >0$. 
\end{definition}

\noindent The $\delta$-maximal components are natural tensor product analogues of the $\delta$-maximal  dominant weights $\dom_{max}(\lambda)$ of an irreducible representation $V(\lambda)$. Likewise, we can often recover  arbitrary components in $V(\lambda) \otimes V(\mu)$ by focusing on just the $\delta$-maximal components. Motivated by the similarities, we would like to understand the associated ``$\delta$-strings" in $\Gamma(\mf[g])$, which should correspond to subrepresentations $V(\nu-k\delta) \subset V(\lambda) \otimes V(\mu)$. Unfortunately, the exact analogue of Lemma \ref{unbroken} does not hold in general for the appearance of components $V(\nu-k\delta)$. However, it is not too far from being correct; the goal of the next section is to give the precise analogue of Lemma \ref{unbroken}.

\subsection{The Goddard--Kent--Olive construction} \label{GKO}
We next introduce the key technical tool which will allow us to study the components of $V(\rho) \otimes V(\rho)$, which is the Goddard--Kent--Olive (or GKO, for short) construction of the Virasoro algebra. The GKO construction,  which is particularly applicable in the study of branching problems and the tensor decomposition problem,  is a ``relative" version of the earlier Sugawara coset construction. For further details, we refer to \cite{KRR}*{Lecture 10}. First, we recall the definition of the Virasoro algebra, as follows. 

\begin{definition} \label{Vir}
The Virasoro algebra $Vir$ is a Lie algebra over $\C$ with basis $\{c, L_k : k \in \Z\}$ with commutator relations 
$$
[L_k, L_j]=(k-j)L_{k+j} + \frac{1}{12}(k^3-k)\delta_{k,-j}c, \ [Vir, c]=0.
$$
\end{definition}

Given a finite-dimensional semisimple Lie algebra $\mathring{\mf[g]}$, the Sugawara construction realizes $Vir$ as a subalgebra of $\widehat{U}(\mf[g])$, a particular completion of the universal enveloping algebra of the associated affine Kac--Moody Lie algebra. The utility of the construction is to examine highest weight irreducible representations $V(\lambda)$ of $\mf[g]$ as representations of the Virasoro algebra. When we consider this realization of the Virasoro algebra, we will denote the basis by $\{L_k^{\mf[g]}, c^{\mf[g]}\}$.

The GKO construction considers an embedding $\mathring{\mf[g]}_1 \hookrightarrow \mathring{\mf[g]}_2$ of finite-dimensional semisimple (or more generally, reductive) Lie algebras and the actions of their affinizations $\mf[g]_1$ and $\mf[g]_2$ on $\mf[g]_2$-modules. In particular, we can take the diagonal embedding $\mathring{\mf[g]} \hookrightarrow \mathring{\mf[g]} \oplus \mathring{\mf[g]}$. Then the Virasoro algebra realization on the $\mf[g] \oplus \mf[g]$-module $V(\lambda) \otimes V(\mu)$, for $\lambda, \mu \in \dom$ given by the GKO construction is
$$
\begin{aligned}
L^{GKO}_k&=L^{\mf[g]\oplus \mf[g]}_k-L^{\mf[g]}_k, \\
c^{GKO}&= c^{\mf[g]\oplus \mf[g]}-c^{\mf[g]}. 
\end{aligned}
$$
The following proposition, taken from \cite{KRR}, summarizes the key properties of this Virasoro action on a tensor product $V(\lambda) \otimes V(\mu)$.

\begin{proposition}[\cite{KRR}, Proposition 10.3] \label{GKOConstruct}
Let $\mf[g]$ be the affine Kac--Moody Lie algebra associated to a simple Lie algebra $\mathring{\mf[g]}$ and $\lambda$, $\mu \in \dom$ with levels $\langle \lambda,K\rangle=l$, $\langle \mu, K \rangle=m$. Then 
\begin{enumerate}
\item $V(\lambda) \otimes V(\mu)$ is a unitarizable Virasoro representation with nonnegative central charge 
$$
(\op{dim} \mathring{\mf[g]}) \left( \frac{l}{l+h^\vee}+\frac{m}{m+h^\vee}-\frac{l+m}{l+m+h^\vee} \right),
$$
where $h^\vee$ is the dual Coxeter number of $\mf[g]$ (\cite{Kac}*{Section 6.1}).

\item $L^{GKO}_0$ acts on $V(\lambda) \otimes V(\mu)$ by 
$$
\frac{1}{2} \left( \frac{(\lambda | \lambda+2\rho)}{l +h^\vee} + \frac{(\mu | \mu+2\rho)}{m+h^\vee}- \frac{\Omega}{l+m+h^\vee} \right),
$$
where $\Omega$ is the Casimir operator of $\mf[g]$ (\cite{Ku3}*{Section 1.5}) and $( \cdot | \cdot)$ is the normalized form on $\mf[h]^\ast$ as in \cite{Ku3}*{Lemma 13.1.8}. Recall (loc. cit.) that $\Omega$ acts on a component $V(\nu) \subset V(\lambda)\otimes V(\mu)$ via the scalar $(\nu|\nu+2\rho)$. 

\item For all $k$, $[L^{GKO}_k, \mf[g]']=0$; i.e., the $L^{GKO}_k$ are intertwining operators for the representation of $\mf[g]':=[\mf[g], \mf[g]]$ on $V(\lambda) \otimes V(\mu)$.
\end{enumerate}
\end{proposition}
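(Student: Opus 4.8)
The plan is to deduce all three parts from the Sugawara realization of the Virasoro algebra for a single affine algebra, applied simultaneously to the big algebra $\mf[g]\oplus\mf[g]$ (acting on $V(\lambda)\otimes V(\mu)$ at level $(l,m)$) and to the diagonally embedded copy of $\mf[g]$ (acting at level $l+m$). First I would recall the Sugawara operators: fixing dual bases $\{u_a\}$, $\{u^a\}$ of $\mathring{\mf[g]}$ with respect to $(\cdot\,|\,\cdot)$ and writing $x(j):=x\otimes t^j$, one sets
$$
L^{\mf[g]}_k=\frac{1}{2(m+h^\vee)}\sum_a\sum_{j\in\Z}{:}\,u_a(-j)\,u^a(k+j)\,{:}\,,
$$
the normal--ordered current bilinear, valid since the restriction to positive level gives $m+h^\vee>0$. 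The input I would take from the Sugawara theorem (\cite{KRR}*{Lecture 10}) is threefold: (i) the $L^{\mf[g]}_k$ satisfy the relations of Definition \ref{Vir} with central charge $c^{\mf[g]}=\frac{m\,\op{dim}\mathring{\mf[g]}}{m+h^\vee}$; (ii) $[L^{\mf[g]}_k,x(j)]=-j\,x(k+j)$ for all $x\in\mathring{\mf[g]}$; and (iii) $L^{\mf[g]}_0$ decomposes on $V(\mu)$ as the Casimir scalar $\frac{(\mu\,|\,\mu+2\rho)}{2(m+h^\vee)}$ plus the $\delta$--degree (energy) operator determined by $[L^{\mf[g]}_0,x(j)]=-j\,x(j)$.

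Next I would prove (3) and (1), which are really the single statement that the diagonal Sugawara operators decouple from the GKO ones. Applying (ii) to both the big and the diagonal algebra, for any diagonal current $(x,x)(j)$ one has
$$
[L^{\mf[g]\oplus\mf[g]}_k,(x,x)(j)]=-j\,(x,x)(k+j)=[L^{\mf[g]}_k,(x,x)(j)],
$$
so $[L^{GKO}_k,(x,x)(j)]=0$; since $L^{GKO}_k$ also commutes with the central element $K$ and $\mf[g]'=[\mf[g],\mf[g]]$ is spanned by the diagonal currents together with $K$, this gives (3). Because the diagonal $L^{\mf[g]}_k$ is itself bilinear in these currents, it follows that $[L^{GKO}_j,L^{\mf[g]}_k]=0$; expanding $[L^{\mf[g]\oplus\mf[g]}_j,L^{\mf[g]\oplus\mf[g]}_k]$ via $L^{\mf[g]\oplus\mf[g]}_k=L^{GKO}_k+L^{\mf[g]}_k$, the cross terms vanish and
$$
[L^{GKO}_j,L^{GKO}_k]=[L^{\mf[g]\oplus\mf[g]}_j,L^{\mf[g]\oplus\mf[g]}_k]-[L^{\mf[g]}_j,L^{\mf[g]}_k].
$$
By (i) the $L^{GKO}_k$ then satisfy the Virasoro relations with $c^{GKO}=c^{\mf[g]\oplus\mf[g]}-c^{\mf[g]}$; additivity over the direct sum gives $c^{\mf[g]\oplus\mf[g]}=\op{dim}\mathring{\mf[g]}\big(\frac{l}{l+h^\vee}+\frac{m}{m+h^\vee}\big)$ and the diagonal copy contributes $\op{dim}\mathring{\mf[g]}\,\frac{l+m}{l+m+h^\vee}$, yielding the stated formula. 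For unitarizability, $V(\lambda)\otimes V(\mu)$ carries a positive--definite contravariant form as a $\mf[g]\oplus\mf[g]$--module, being a tensor product of unitarizable integrable highest weight modules, and the Sugawara operators satisfy $(L^{\mf[g]}_k)^\ast=L^{\mf[g]}_{-k}$, hence so do the $L^{GKO}_k$; nonnegativity of $c^{GKO}$ then follows from the general fact that a unitarizable Virasoro module has $c\geq 0$ (equivalently, from the subadditivity of the concave function $x\mapsto x/(x+h^\vee)$, which vanishes at the origin).

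For (2), I would split $L^{GKO}_0=L^{\mf[g]\oplus\mf[g]}_0-L^{\mf[g]}_0$ using (iii). On $V(\lambda)\otimes V(\mu)$ the scalar part of $L^{\mf[g]\oplus\mf[g]}_0$ is $\frac{(\lambda\,|\,\lambda+2\rho)}{2(l+h^\vee)}+\frac{(\mu\,|\,\mu+2\rho)}{2(m+h^\vee)}$, while on a component $V(\nu)$ the scalar part of the diagonal $L^{\mf[g]}_0$ is $\frac{(\nu\,|\,\nu+2\rho)}{2(l+m+h^\vee)}=\frac{\Omega}{2(l+m+h^\vee)}$, since $\Omega$ acts on $V(\nu)$ by $(\nu\,|\,\nu+2\rho)$. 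The crucial point is that the two degree operators coincide: both equal $-d$ by property (ii) at $k=0$ for the diagonal currents, as $\delta$ (equivalently $d$) is common to $\mf[g]$ and its diagonal embedding. Thus the degree parts cancel, leaving exactly the expression in (2).

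The only genuinely hard step is input (i): the normal--ordering computation that produces the Virasoro relations and, in particular, the $h^\vee$--shifted denominator in the central charge. This is the standard Sugawara calculation, which I would either carry out by the usual contraction bookkeeping or simply cite from \cite{KRR}. Once (i)--(iii) are in hand, parts (1)--(3) are formal, the only care being the verification in (2) that the $\delta$--degree operators for the big and diagonal algebras literally agree.
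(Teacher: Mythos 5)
The paper offers no proof of this statement---it is imported verbatim as \cite{KRR}*{Proposition 10.3}---so there is no internal argument to compare against; judged on its own, your proof is correct and is essentially the standard Sugawara/GKO argument of that reference. In particular, the key points are all in order: the decoupling $[L^{GKO}_j,\,L^{\mf[g]}_k]=0$ deduced from $[L^{GKO}_j,\mf[g]']=0$, the resulting central-charge difference $c^{GKO}=c^{\mf[g]\oplus\mf[g]}-c^{\mf[g]}$ (nonnegative by subadditivity of $x\mapsto x/(x+h^\vee)$), and the cancellation in part (2) of the two degree operators, which indeed both equal $-d$ once the diagonal Sugawara scalar is rewritten via the affine Casimir as $L^{\mathrm{diag}}_0=\frac{\Omega}{2(l+m+h^\vee)}-d$ on each component.
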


In particular, given a $\delta$-maximal component $V(\nu) \subset V(\lambda) \otimes V(\mu)$, Proposition \ref{GKOConstruct}(3) says that $L^{GKO}_k$ stabilizes the $\mf[g]'$ isotypical component 
$$
\sum_{n \geq 0} V(\nu-n\delta)^{m_{\lambda, \mu}^{\nu-n\delta}} \subset V(\lambda) \otimes V(\mu),
$$
with $L^{GKO}_k\cdot v_\nu$ either zero or a highest weight vector for the $\mf[g]$-action, where $v_\nu$ is the highest weight vector of $V(\nu)$. We refer to Kac--Wakimoto \cite{KW} who considered the connection between the action of the GKO construction on these components and the nonvanishing of the multiplicities $m_{\lambda, \mu}^{\nu-k\delta}$. This was further used in the contexts of the affine tensor semigroup and affine tensor cone by Brown--Kumar \cite{BK} and Ressarye \cite{Res}. These methods were adapted and crucially used in our previous work \cite{JK} on the existence of root components in the tensor decomposition problem. The following proposition summarizes the technical application of this approach; we refer to \cite{JK}*{Section 3} for a more in-depth treatment and proofs.

\begin{proposition} \label{GKOMaximal}
Let $\lambda$, $\mu \in \dom$ with positive levels $l$, $m$, respectively. Let $V(\nu) \subset V(\lambda) \otimes V(\mu)$ be a $\delta$-maximal component. Then
\begin{enumerate}
\item If $ \frac{(\lambda | \lambda+2\rho)}{l +h^\vee} + \frac{(\mu | \mu+2\rho)}{m+h^\vee}- \frac{ (\nu | \nu+2\rho)}{l+m+h^\vee}  \neq 0$, then $V(\nu-k\delta) \subset V(\lambda) \otimes V(\mu)$ for all $k \geq 0$.
\item Else, $V(\nu-k\delta) \subset V(\lambda) \otimes V(\mu)$ for $k=0$, $k \geq 2$, and $V(\nu-\delta) \not \subset V(\lambda) \otimes V(\mu)$.
\end{enumerate}
\end{proposition}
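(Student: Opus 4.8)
The plan is to use the commuting Virasoro action from the GKO construction (Proposition \ref{GKOConstruct}) to convert the appearance of the components $V(\nu-k\delta)$ into a statement about weight spaces of a single unitary Virasoro module. Let $v_\nu \in V(\lambda)\otimes V(\mu)$ be the $\mf[g]'$-highest weight vector of the $\delta$-maximal component $V(\nu)$. First I would check that $v_\nu$ is a Virasoro highest weight vector: by Proposition \ref{GKOConstruct}(3) each $L_k^{GKO}$ commutes with $\mf[g]'$, so for $k>0$ the vector $L_k^{GKO}v_\nu$ is again a $\mf[g]'$-highest weight vector, now of weight $\nu+k\delta$; since $\nu$ is $\delta$-maximal no such component occurs, forcing $L_k^{GKO}v_\nu=0$. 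Next I would record the $L_0^{GKO}$-eigenvalue $h_\nu$ on $v_\nu$. Combining Proposition \ref{GKOConstruct}(2) with the identity $(\nu-k\delta \,|\, \nu-k\delta+2\rho)=(\nu\,|\,\nu+2\rho)-2k(l+m+h^\vee)$ --- which follows from $(\delta\,|\,\delta)=0$, $(\delta\,|\,\nu)=l+m$ and $(\delta\,|\,\rho)=h^\vee$ --- one finds that $L_0^{GKO}$ acts on the weight $\nu-k\delta$ by $h_\nu+k$, where $2h_\nu$ is exactly the quantity displayed in the statement. Thus the hypothesis of (1) is precisely $h_\nu\neq 0$, and unitarity (Proposition \ref{GKOConstruct}(1)) forces $h_\nu\geq 0$, so the dichotomy is $h_\nu>0$ versus $h_\nu=0$.

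The positive inclusions I would obtain by explicit norm computations in the unitary module. Since $V(\lambda)\otimes V(\mu)$ is unitarizable, the contravariant Hermitian form is positive definite and $(L_k^{GKO})^\dagger=L_{-k}^{GKO}$. Because $L_{-k}^{GKO}$ commutes with $\mf[g]'$, each of $(L_{-1}^{GKO})^k v_\nu$ and $L_{-k}^{GKO}v_\nu$ is a $\mf[g]'$-highest weight vector of weight $\nu-k\delta$, so its non-vanishing immediately yields $V(\nu-k\delta)\subset V(\lambda)\otimes V(\mu)$. Using $[L_1,L_{-1}]=2L_0$ I would compute $\|(L_{-1}^{GKO})^k v_\nu\|^2=\left(\prod_{j=1}^{k} j\,(2h_\nu+j-1)\right)\|v_\nu\|^2$, strictly positive for all $k$ when $h_\nu>0$; this proves (1). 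For (2), when $h_\nu=0$ the $j=1$ factor vanishes, so $L_{-1}^{GKO}v_\nu=0$; but using $[L_k,L_{-k}]=2kL_0+\tfrac{1}{12}(k^3-k)c$ I would compute $\|L_{-k}^{GKO}v_\nu\|^2=\tfrac{c}{12}(k^3-k)\|v_\nu\|^2$ for $k\geq 2$. An elementary rearrangement of the central charge in Proposition \ref{GKOConstruct}(1) gives $c=(\op{dim}\mathring{\mf[g]})\,\tfrac{lm}{l+m+h^\vee}\left(\tfrac{1}{l+h^\vee}+\tfrac{1}{m+h^\vee}\right)>0$ for positive levels, so $L_{-k}^{GKO}v_\nu\neq 0$ and hence $V(\nu-k\delta)\subset V(\lambda)\otimes V(\mu)$ for every $k\geq 2$; together with the trivial case $k=0$ this yields all of (2) except the non-inclusion.

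The main obstacle is the negative statement $V(\nu-\delta)\not\subset V(\lambda)\otimes V(\mu)$ when $h_\nu=0$: showing $L_{-1}^{GKO}v_\nu=0$ only kills one candidate vector, whereas I must rule out \emph{every} $\mf[g]'$-highest weight vector of weight $\nu-\delta$. For this I would invoke the precise correspondence coming from the GKO/coset branching of $V(\lambda)\otimes V(\mu)$ as a $\mf[g]'\times Vir$-module (Kac--Wakimoto \cite{KW}, as recalled in \cite{JK}*{Section 3}): along the $\delta$-string below the $\delta$-maximal weight $\nu$, the multiplicity $m_{\lambda,\mu}^{\nu-k\delta}$ equals the dimension of the level-$k$ weight space of the irreducible unitary Virasoro module $L(c,h_\nu)$ generated by $v_\nu$. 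For $h_\nu=0$ this is the vacuum module, whose level-$1$ space is spanned by $L_{-1}^{GKO}v_\nu=0$ and hence vanishes, giving the desired non-inclusion. The delicate point --- the step I expect to require the most care --- is justifying that no \emph{new} Virasoro highest weight vector of weight $\nu-\delta$ (generating an independent copy of $L(c,1)$) can occur; this is exactly the content of the branching decomposition, and is where the established GKO machinery, rather than the elementary norm estimates above, is indispensable.
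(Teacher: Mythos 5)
Your argument for every positive assertion is correct and complete, and it is in substance the paper's own argument: the paper proves nothing for this proposition, deferring entirely to \cite{JK}*{Section 3}, and the mechanism there is the one you reconstruct --- $\delta$-maximality forces $v_\nu$ to be a GKO-primary, $2h_\nu$ is the displayed quantity and $L_0^{GKO}$ acts by $h_\nu+k$ on the weight-$(\nu-k\delta)$ piece, unitarity gives $h_\nu\geq 0$, and your two norm formulas together with the positivity of the central charge (your rearrangement of $c$ is algebraically correct) yield $V(\nu-k\delta)$ for all $k\geq 0$ in case (1), and for $k=0$ and $k\geq 2$ in case (2).

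The genuine gap is the non-inclusion in (2), and it sits exactly where you placed your trust. Write $U_{\nu-k\delta}$ for the space of $\mathfrak{g}'$-highest weight vectors of weight $\nu-k\delta$ in $V(\lambda)\otimes V(\mu)$, so that $m_{\lambda,\mu}^{\nu-k\delta}=\dim U_{\nu-k\delta}$ and $U=\bigoplus_{k\geq 0} U_{\nu-k\delta}$ is a unitary positive-energy Virasoro module. The ``precise correspondence'' you invoke --- that $m_{\lambda,\mu}^{\nu-k\delta}$ equals $\dim L(c,h_\nu)_k$ for the single irreducible module generated by $v_\nu$ --- is false in general: what the coset/branching decomposition actually gives is only complete reducibility $U\cong\bigoplus_j L(c,h_\nu+k_j)$, and new primaries at levels $k_j\geq 2$ genuinely occur (already at $k=0$ your equality fails whenever $m_{\lambda,\mu}^{\nu}>1$, and for higher-level diagonal cosets the branching functions are infinite sums of Virasoro characters). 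Complete reducibility alone cannot exclude a summand with $k_j=1$: when $h_\nu=0$, every $u\in U_{\nu-\delta}$ is automatically a primary (since $L_1u\in U_\nu$ and $\langle L_1u,w\rangle=\langle u,L_{-1}w\rangle=0$ for all $w\in U_\nu$, because $L_{-1}$ kills $U_\nu$), and abstractly $L(c,0)\oplus L(c,1)$ is a perfectly good unitary module, so nothing in the decomposition forbids such a $u$. Hence ``no new primary at level one above a $\delta$-maximal component with $h_\nu=0$'' is not a formal consequence of the branching decomposition; it is the nontrivial Kac--Wakimoto-type statement that \cite{JK}*{Section 3} (following \cite{KW} and \cite{BK}) actually supplies. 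A correct write-up must either quote that specific result or prove it; deriving it from the decomposition itself, as you propose, is a step that fails. That said, since the paper's own ``proof'' is nothing but the citation to \cite{JK}, your treatment of everything else is, if anything, more complete than the paper's.
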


Thus, the key conclusion is that by computing the action of $L^{GKO}_0$ on a  potential $\delta$-maximal component $V(\nu) \subset V(\lambda) \otimes V(\nu)$, we can show the existence of all submodules of the form $V(\nu-k\delta)$ by demonstrating the appearance of $V(\nu)$ in the tensor product. We note, however, that this does \textit{not} give a way to construct the $\delta$-maximal components themselves; this typically must be done independently of the Virasoro action considerations. These constructions, as well as verifying the positivity of the $L^{GKO}_0$ action, will form the basis of our approach when restricting to $\lambda=\mu=\rho$ in the following sections.

\section{Components of $V(d\rho) \otimes V(d\rho)$} 
In this section $\mf[g]$ is the affine Kac-Moody Lie algebra associated to a simple Lie algebra $\mathring{\mf[g]}$. 
\vskip1ex

With the previous details at our disposal, we can now consider the components of $V(d\rho) \otimes V(d\rho)$ for a saturation factor $d \geq 1$ of $\mf[g]$. In particular, recall that by definition (as in Definition \ref{satfactor}) if $\lambda \leq 2\rho$ is a dominant integral  weight such that $V(N\lambda) \subset V(N\rho) \otimes V(N\rho)$ for some $N \geq 1$, then for any saturation factor $d \geq 1$ we have $V(d\lambda) \subset V(d\rho) \otimes V(d\rho)$. Our approach is to first show, using the convexity of the saturated tensor semigroup, that such an $N$ exists for all dominant weights $\lambda \leq 2\rho$. First, we have the following proposition which is an adaptation of \cite{CKM}*{Proposition 9} to the affine setting.

\begin{proposition} \label{CKMprop} 
Let $\lambda \in \dom$ with $\lambda \leq 2\rho$. Then we can write $\lambda=\rho+\beta$ for some weight $\beta \in \mc[P](\rho)$.  
\end{proposition}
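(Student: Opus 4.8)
The plan is to set $\beta := \lambda - \rho$ and to verify directly that $\beta$ is a weight of $V(\rho)$ by means of the combinatorial description in Proposition \ref{convhull}, which gives $\mathcal{P}(\rho) = (\rho + \mathcal{Q}) \cap \op{conv}_\Q\{w\rho : w \in W\}$. Thus it suffices to check two things: that $\beta \in \rho + \mathcal{Q}$, and that $\beta$ lies in the rational convex hull of the Weyl orbit of $\rho$. The first is immediate, since $\beta - \rho = \lambda - 2\rho$ lies in $-\mathcal{Q}^+ \subseteq \mathcal{Q}$ precisely because $\lambda \leq 2\rho$. So the entire content is the convex-hull membership $\beta \in \op{conv}_\Q\{w\rho : w \in W\}$.

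For this I would first observe that $\lambda$ lies in the dominant weight polyhedron $D_{2\rho}$. Indeed $\langle 2\rho, d\rangle = 0$ and $2\rho$ is regular dominant, so $D_{2\rho}$ is defined; $\lambda$ is dominant, hence lies in $\mathcal{C}_\Q$, and since $\lambda$ is dominant integral with $\lambda \leq 2\rho$ it is a weight of $V(2\rho)$ by Proposition \ref{convhull}, so $\lambda \in \op{conv}_\Q\{w(2\rho) : w \in W\}$; together these give $\lambda \in D_{2\rho}$. Next I would invoke the decomposition of Proposition \ref{dompoly} applied to $2\rho$, writing $\lambda = p - s\delta$ with $p \in \op{conv}_\Q\{\hat{b}_J(2\rho)\}_{J \subsetneq I}$ and $s \in \Q_+$. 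By Corollary \ref{2rhovert} each vertex is $\hat{b}_J(2\rho) = \rho + w_0^J(\rho)$, so $p - \rho$ is a convex combination of the points $w_0^J(\rho)$, each lying in the Weyl orbit of $\rho$. Hence $\beta = \lambda - \rho = (p - \rho) - s\delta = q - s\delta$, where $q := p - \rho \in \op{conv}_\Q\{w\rho : w \in W\}$ and $s \geq 0$.

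The remaining step, which is genuinely new relative to the compact finite-dimensional case of \cite{CKM}*{Proposition 9}, is to absorb the recession-cone term $-s\delta$: I must show that $\op{conv}_\Q\{w\rho : w \in W\}$ is stable under subtracting $\Q_+\delta$. The key reduction is that, since $\delta$ is $W$-invariant and $W$ permutes the orbit $\{w\rho\}$ and hence preserves its convex hull, it suffices to prove the single membership $\rho - \delta \in \op{conv}_\Q\{w\rho\}$; applying $w$ then yields $w\rho - \delta \in \op{conv}_\Q\{w\rho\}$ for every $w$, and expanding any $q = \sum_w t_w(w\rho)$ as $q - \delta = \sum_w t_w(w\rho - \delta)$ shows $q - \delta$, hence (by iterating and interpolating along the segment back to $q$) any $q - s\delta$ with $s \in \Q_+$, remains in the hull. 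Finally, $\rho - \delta \in \op{conv}_\Q\{w\rho\}$ follows from Proposition \ref{convhull} once more: because $\langle \delta, \alpha_i^\vee\rangle = 0$ for all $i \in I$, the weight $\rho - \delta$ is dominant integral with $\rho - \delta \leq \rho$, so it is a weight of $V(\rho)$ and in particular lies in $\op{conv}_\Q\{w\rho\}$. Combining this with the lattice condition gives $\beta \in \mathcal{P}(\rho)$, which is exactly the claim. I expect the $-s\delta$ absorption to be the only real obstacle; everything else is bookkeeping on top of the polyhedral decomposition already established.
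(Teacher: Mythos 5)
Your proof is correct, and it shares the paper's skeleton: both arguments reduce the claim to showing $\lambda-\rho\in\op{conv}_\Q\{w\rho:w\in W\}$ (the lattice condition being immediate from $\lambda\le 2\rho$), and both get there by placing $\lambda$ in $D_{2\rho}$ and invoking Proposition \ref{dompoly} together with Corollary \ref{2rhovert} to write $\lambda-\rho = q - s\delta$, where $q$ is a rational convex combination of the points $w_0^J(\rho)$ and $s\in\Q_+$. The genuine divergence is in how the recession term $-s\delta$ is absorbed. The paper clears denominators: it scales by $N$ so that $N(\lambda-\rho)$ is integral, checks via Proposition \ref{convhull} that $\sum_J c'_J w_0^J(\rho)$ is a weight of $V(N\rho)$, applies Lemma \ref{unbroken} inside $\mathcal{P}(N\rho)$ to subtract $c'_\delta\delta$, and then translates back into a convex-hull statement and divides by $N$. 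You instead prove the scaling-free geometric statement that $\op{conv}_\Q\{w\rho:w\in W\}$ is stable under translation by $-\Q_+\delta$: since $\delta$ is $W$-fixed and $W$ permutes the orbit, this reduces to the single membership $\rho-\delta\in\op{conv}_\Q\{w\rho\}$ (an instance of Proposition \ref{convhull}, or equivalently of Lemma \ref{unbroken}), after which convexity handles arbitrary rational $s\ge 0$ by iteration and interpolation along a segment. Your route is arguably more economical and isolates the real point --- that $-\delta$ is a recession direction not merely of $D_{2\rho}$ but of the entire Weyl-orbit hull --- whereas the paper's detour through $\mathcal{P}(N\rho)$ keeps every intermediate object an honest weight of an integrable module, at the cost of the somewhat artificial clearing-of-denominators step. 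Both proofs rest on exactly the same two ingredients (Proposition \ref{convhull} and the polyhedral decomposition of Proposition \ref{dompoly}/Corollary \ref{2rhovert}), so the difference is one of packaging, but your packaging of the final step is genuinely different from, and cleaner than, the paper's.
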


\begin{proof}
Since $\lambda \leq 2\rho$, we have that $\lambda-\rho \in \rho + \mc[Q]$. So, by Proposition \ref{convhull}, it suffices to show that $\lambda-\rho \in \op{conv}_\Q \{w \rho: w \in W\}$. Now, by Proposition \ref{dompoly} and Corollary \ref{2rhovert}, we can write 
$$
\lambda=\sum_{J \subsetneq I} c_J \hat{b}_J(2\rho) + c_\delta(-\delta),
$$
where as in Corollary \ref{2rhovert},
 $\hat{b}_J(2\rho)=\rho+w_0^J(\rho)$, and $c_J, c_\delta \in \Q_+$ with $\sum_{J} c_J=1$. Thus we have 
$$
\lambda-\rho=\sum_{J \subsetneq I} c_J w_0^J(\rho) +c_\delta(-\delta).
$$
Clearing denominators, we can write 
$$
N(\lambda-\rho)=\sum_{J \subsetneq I} c'_J w_0^J(\rho)+c'_\delta(-\delta),
$$
where $N \in \Z_{\geq 1}$, and $c'_J, c'_\delta \in \Z_+$ with $\sum_J c'_J=N$. Now, 
$$
\sum_{J \subsetneq I} c'_J w_0^J(\rho) =\sum_{J \subsetneq I} \frac{c'_J}{N} w_0^J(N\rho) 
\in \mc[P](N \rho),
$$
by Proposition \ref{convhull} since 
$$ N\rho- \sum_{J \subsetneq I} c'_J w_0^J(\rho)= -\sum_{J \subsetneq I} c'_J (w_0^J(\rho)-\rho)\in Q.$$
By Lemma \ref{unbroken}, we have that subtracting $c'_\delta (\delta)$ will remain a weight of $\mc[P](N\rho)$, so we conclude $N(\lambda-\rho) \in \mc[P](N\rho)$. Since $N(\lambda-\rho)$ is an integral weight in $\mc[P](N\rho)$, by Proposition \ref{convhull} we can write 
$$
N(\lambda-\rho)=\sum_{w \in W} a_w w(N\rho) = N\sum_{w \in W} a_w w(\rho)
$$
for some $a_w \in \Q_+$ with $\sum_{w \in W} a_w=1$. Dividing by $N$ and using Proposition  \ref{convhull} again
 gives the result since 
$$(\sum_{w\in W}\, a_w w(\rho)) - \rho=\lambda -2\rho \in Q\,\,\,\text{by assumption}.$$
\end{proof}

\begin{remark} \label{Mvertices}
By the same proof, since the candidates for vertices of the dominant weight polyhedron depend linearly on the highest weight, we have that for any integer $M \geq 1$, if $\lambda \in \dom$ is such that $\lambda \leq 2M\rho$, then we can write $\lambda=M\rho+\beta$ for some weight $\beta \in \mc[P](M\rho)$.
\end{remark}

\begin{remark}\label{remark5.3}
Using Proposition \ref{CKMprop}, one could alternatively adapt the proof of Theorem \ref{finite} as in \cite{CKM} in a straightforward way to prove our Theorem \ref{components}(1)  using the results of Ressayre \cite{Res} on the inequalities for the affine tensor semigroup. We do not take that approach, however, and instead highlight the use of the GKO construction, which will be key to the case $\mf[g]=A_n^{(1)}$.
\end{remark}

Now, we can use Proposition \ref{CKMprop} and Remark \ref{Mvertices} for $\lambda \leq 2M\rho$ to simplify the proof of the  following computational proposition. This, in turn, will allow us to apply the GKO construction and reduce the proof of Theorem \ref{components} to the $\delta$-maximal components by using  Proposition \ref{GKOMaximal}.

\begin{proposition} \label{L0weight}
Let $M \geq 1$ be an integer, and let $\lambda \in \dom$ with $\lambda \leq 2M\rho$. Then
$$
 \frac{2(M\rho| (M+2)\rho)}{(M+1)h^\vee}-\frac{(\lambda | \lambda + 2\rho)}{(2M+1)h^\vee}  >0.
$$
\end{proposition}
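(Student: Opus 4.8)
The plan is to clear the (positive) denominators and reduce the statement to a monotonicity estimate for the Casimir value $(\,\cdot\,|\,\cdot +2\rho)$. Since $h^\vee>0$, multiplying the asserted inequality through by $(M+1)(2M+1)h^\vee$ shows it is equivalent to
$$2(2M+1)(M\rho\,|\,(M+2)\rho) > (M+1)(\lambda\,|\,\lambda+2\rho),$$
and by bilinearity $(M\rho\,|\,(M+2)\rho)=M(M+2)(\rho\,|\,\rho)$. So it suffices to bound $(\lambda\,|\,\lambda+2\rho)$ from above by the value $(2M\rho\,|\,(2M+2)\rho)=4M(M+1)(\rho\,|\,\rho)$ attained at $\lambda=2M\rho$, and then to check that the resulting purely numerical inequality is strict. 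I note for orientation that the left-hand side of the proposition is exactly twice the eigenvalue of $L_0^{GKO}$ on the component $V(\lambda)\subset V(M\rho)\otimes V(M\rho)$ via Proposition \ref{GKOConstruct}(2) (here the levels are $l=m=M h^\vee$, so $l+h^\vee=(M+1)h^\vee$ and $l+m+h^\vee=(2M+1)h^\vee$); however, unitarity yields only nonnegativity, and only for genuine components, so I give a direct argument valid for every dominant $\lambda\le 2M\rho$.

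The key step is the following monotonicity claim: if $\mu\le\nu$ are dominant weights, then $(\mu\,|\,\mu+2\rho)\le(\nu\,|\,\nu+2\rho)$. To prove it, write $\gamma:=\nu-\mu\in\mc[Q]^+$ and use $(x\,|\,x+2\rho)=(x+\rho\,|\,x+\rho)-(\rho\,|\,\rho)$, so that
$$(\nu\,|\,\nu+2\rho)-(\mu\,|\,\mu+2\rho)=(\nu+\rho\,|\,\nu+\rho)-(\mu+\rho\,|\,\mu+\rho)=2(\mu+\rho\,|\,\gamma)+(\gamma\,|\,\gamma).$$
The first term is nonnegative because $\mu+\rho$ is dominant and $\gamma=\sum_i c_i\alpha_i$ with $c_i\ge 0$, whence $(\mu+\rho\,|\,\alpha_i)=\tfrac{(\alpha_i|\alpha_i)}{2}\langle\mu+\rho,\alpha_i^\vee\rangle\ge 0$. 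The second term is nonnegative because the Gram matrix $\big((\alpha_i\,|\,\alpha_j)\big)_{i,j\in I}$ is the symmetrized affine Cartan matrix, which is positive semidefinite (this is the defining feature of affine type, cf. \cite{Kac}*{Chapter 4}); hence $(\gamma\,|\,\gamma)\ge 0$ for every $\gamma$ in the root lattice. Applying this with $\nu=2M\rho$ and $\mu=\lambda$ (legitimate since $\lambda\le 2M\rho$ is dominant) gives $(\lambda\,|\,\lambda+2\rho)\le 4M(M+1)(\rho\,|\,\rho)$. I expect this semidefiniteness input to be the crux: in a general symmetrizable Kac--Moody setting the form need not be semidefinite on $\mc[Q]^+$ and the estimate can fail, so it is precisely the affine hypothesis that rescues the argument.

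Finally I would conclude by combining the bound with the numerical comparison. Substituting the estimate into the reduced inequality, it suffices to verify $2(2M+1)M(M+2)(\rho\,|\,\rho) > 4M(M+1)^2(\rho\,|\,\rho)$. Since $(\rho\,|\,\rho)>0$ — indeed $\rho=\mathring{\rho}+h^\vee\Lambda_0$ gives $(\rho\,|\,\rho)=(\mathring{\rho}\,|\,\mathring{\rho})>0$, the form being positive definite on $\mathring{\mf[h]}^*$ and $(\Lambda_0\,|\,\Lambda_0)=(\Lambda_0\,|\,\mathring{\rho})=0$ — I may cancel $2M(\rho\,|\,\rho)>0$ and reduce to $(2M+1)(M+2)>2(M+1)^2$, i.e. $2M^2+5M+2>2M^2+4M+2$, i.e. $M>0$, which holds for every $M\ge 1$. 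Since this numerical gap is strict even when $\lambda=2M\rho$ (the only case of equality in the monotonicity claim), strictness of the proposition holds for all dominant $\lambda\le 2M\rho$, completing the proof.
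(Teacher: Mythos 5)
Your proof is correct, and it takes a genuinely different route from the paper's. The paper deduces the inequality from Remark \ref{Mvertices} (which itself rests on the dominant weight polyhedron analysis of Section \ref{Sect3}): it writes $\lambda=M\rho+\beta$ with $\beta\in\mathcal{P}(M\rho)$ and rearranges the left-hand side into $\frac{1}{(M+1)(2M+1)h^\vee}$ times the sum $(2M+4)\bigl((M\rho|M\rho)-(M\rho|\beta)\bigr)+(M+1)\bigl((M\rho|M\rho)-(\beta|\beta)\bigr)+2(M\rho|M\rho)+(2\rho|M\rho-\beta)$; the first two terms are nonnegative by \cite{Kac}*{Proposition 11.4} since $\beta\in\mathcal{P}(M\rho)$, the last is nonnegative since $M\rho-\beta\in\mathcal{Q}^+$, and strictness comes from $2(M\rho|M\rho)=2M^2(\rho|\rho)>0$. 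You instead prove monotonicity of $x\mapsto(x|x+2\rho)$ along the dominance order: for dominant $\mu\le\nu$ the difference equals $2(\mu+\rho|\gamma)+(\gamma|\gamma)$ with $\gamma=\nu-\mu\in\mathcal{Q}^+$, which is nonnegative because a dominant weight pairs nonnegatively with $\mathcal{Q}^+$ and because the normalized form is positive semidefinite on the span of the simple roots in affine type; strictness then comes from the purely numerical gap $(2M+1)(M+2)>2(M+1)^2$, i.e. $M>0$. Both steps of your argument check out, and your bound $(\lambda|\lambda+2\rho)\le 4M(M+1)(\rho|\rho)$ together with the reduction after clearing denominators is exactly what is needed. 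Your route is leaner and more self-contained: it uses only $\lambda\in\dom$ and $\lambda\le 2M\rho$, bypassing both Proposition \ref{CKMprop}/Remark \ref{Mvertices} and Kac's Proposition 11.4, and it isolates precisely where affineness enters (semidefiniteness of the form on the root lattice). What the paper's route buys is economy within the larger argument: Proposition \ref{CKMprop} is developed anyway for Theorem \ref{maintheorem}, so the decomposition $\lambda=M\rho+\beta$ comes for free at that point. Your orientation remark about $L_0^{GKO}$ is also accurate, including the caveat that unitarity alone would not suffice, since it gives only nonnegativity and only on genuine components.
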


\begin{proof}
Write $\lambda=M\rho+\beta$ for some $\beta \in \mc[P](M\rho)$. Substituting this for $\lambda$ and by careful simplifications, we get 
$$
\begin{aligned}
 \frac{2(M\rho| (M+2)\rho)}{(M+1)h^\vee}-\frac{(\lambda | \lambda + 2\rho)}{(2M+1)h^\vee}  &=
\frac{1}{(M+1)(2M+1)h^\vee} \big( (2M+4) ( (M\rho | M \rho) - (M\rho | \beta) ) \\
 &+(M+1) ( (M \rho | M \rho) - (\beta | \beta) ) +2(M\rho | M\rho) +(2\rho | M\rho-\beta) \big). 
\end{aligned}
$$
But by \cite{Kac}*{Proposition 11.4}, we have that $(M\rho | M \rho)-(M \rho | \beta) \geq 0$ and $(M \rho | M \rho) - (\beta | \beta) \geq 0$, since $\beta \in \mc[P](M\rho)$. Finally, one can check that $2(M \rho | M \rho)=2M^2 (\rho | \rho) >0$, and $(2\rho | M\rho-\beta) \geq 0$, since $M\rho-\beta \in \mc[Q]^+$. Thus the expression of the proposition  is strictly positive. 
\end{proof}

With this in hand, we can apply Proposition \ref{GKOMaximal} to conclude the following. Observe that, by definition, $h^\vee := \langle \rho, K\rangle$. 

\begin{corollary} \label{2MrhoMaximals} Let $M$ be a positive integer. 
Take  $\lambda \in \dom$ such that $V(\lambda) \subset V(M\rho) \otimes V(M\rho)$ is a $\delta$-maximal component. Then $V(\lambda-k\delta) \subset V(M\rho) \otimes V(M\rho)$ for any $k \geq 0$.
\end{corollary}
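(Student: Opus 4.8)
The plan is to specialize the Goddard--Kent--Olive criterion of Proposition \ref{GKOMaximal} to the diagonal case $\lambda=\mu=M\rho$ and to recognize that the resulting obstruction quantity is exactly the expression already shown to be strictly positive in Proposition \ref{L0weight}. First I would record the levels: since $h^\vee=\langle \rho, K\rangle$, the level of $M\rho$ is $\langle M\rho, K\rangle = Mh^\vee$, so in the notation of Proposition \ref{GKOMaximal} we have $l=m=Mh^\vee$. For the given $\delta$-maximal component $V(\lambda)\subset V(M\rho)\otimes V(M\rho)$ (which plays the role of the component $V(\nu)$ there), the quantity appearing in Proposition \ref{GKOMaximal}(1) becomes
$$
\frac{(M\rho\mid M\rho+2\rho)}{Mh^\vee+h^\vee}+\frac{(M\rho\mid M\rho+2\rho)}{Mh^\vee+h^\vee}-\frac{(\lambda\mid \lambda+2\rho)}{2Mh^\vee+h^\vee}=\frac{2(M\rho\mid (M+2)\rho)}{(M+1)h^\vee}-\frac{(\lambda\mid \lambda+2\rho)}{(2M+1)h^\vee},
$$
using $M\rho+2\rho=(M+2)\rho$. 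This is precisely the left-hand side of the inequality in Proposition \ref{L0weight}, with the weight there played by our $\lambda$.

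Next I would verify the hypothesis $\lambda\leq 2M\rho$ of Proposition \ref{L0weight}. This is immediate: since $V(\lambda)$ is a component of $V(M\rho)\otimes V(M\rho)$, its highest weight $\lambda$ is in particular a weight of the tensor product, and every weight of $V(M\rho)\otimes V(M\rho)$ is a sum of a weight of each factor, hence bounded above in dominance order by $M\rho+M\rho=2M\rho$. Thus Proposition \ref{L0weight} applies and shows the displayed quantity is strictly positive, in particular nonzero. With this obstruction nonzero, Proposition \ref{GKOMaximal}(1) immediately yields $V(\lambda-k\delta)\subset V(M\rho)\otimes V(M\rho)$ for all $k\geq 0$, which is the claim.

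I do not expect any genuine obstacle here: the analytic content, namely the positivity of the relevant $L^{GKO}_0$-eigenvalue difference, has already been isolated in Proposition \ref{L0weight}, so this corollary is essentially an assembly step. The only things to carry out carefully are the bookkeeping of the levels $l=m=Mh^\vee$, the algebraic identification of the two expressions above, and the elementary bound $\lambda\leq 2M\rho$. The single point worth emphasizing is that Proposition \ref{L0weight} gives a \emph{strict} inequality rather than mere nonvanishing; this is exactly what places us in case (1) of Proposition \ref{GKOMaximal} and lets us avoid the exceptional case (2), in which the string $V(\lambda-\delta)$ would be missing.
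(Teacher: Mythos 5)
Your proposal is correct and follows exactly the route the paper intends: the paper derives this corollary by applying Proposition \ref{GKOMaximal} with $\lambda=\mu=M\rho$ (noting $h^\vee=\langle\rho,K\rangle$, so $l=m=Mh^\vee$) and invoking the strict positivity established in Proposition \ref{L0weight}. Your added verifications---the identification of the $L_0^{GKO}$ obstruction with the expression in Proposition \ref{L0weight} and the bound $\lambda\leq 2M\rho$ coming from $\lambda$ being a weight of $V(M\rho)\otimes V(M\rho)$---are precisely the details the paper leaves implicit.
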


We can now prove the following theorem, which is the first major result of this paper.

\begin{theorem} \label{maintheorem} 
Take $\lambda \in \dom$ such that $\lambda \leq 2\rho$. Then $V(d\lambda) \subset V(d\rho) \otimes V(d\rho)$ for any saturation factor $d$ of $\mf[g]$. 
\end{theorem}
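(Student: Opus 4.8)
The plan is to reduce the statement to the single existence claim $(\rho,\rho;\lambda)\in\Gamma(\mf[g])$. Once we know that $V(N\lambda)\subset V(N\rho)\otimes V(N\rho)$ for \emph{some} integer $N\geq 1$, the triple $(\rho,\rho;\lambda)$ lies in $\Gamma(\mf[g])\cap(\dom)^3$, and the definition of a saturation factor (Definition \ref{satfactor}) immediately yields $V(d\lambda)\subset V(d\rho)\otimes V(d\rho)$ for every saturation factor $d$. So the entire content of the theorem is to produce one such $N$.

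To build the required component I would exploit the polyhedral decomposition of $D_{2\rho}$. Since $\lambda$ is dominant with $\lambda\leq 2\rho$, Proposition \ref{convhull} shows $\lambda\in\mc[P](2\rho)\cap\mc[C]_\Q$, hence $\lambda\in D_{2\rho}$; as $2\rho$ is regular dominant with $\langle 2\rho,d\rangle=0$, Proposition \ref{dompoly} together with Corollary \ref{2rhovert} lets us write $\lambda=\sum_{J\subsetneq I}c_J(\rho+w_0^J(\rho))-c_\delta\delta$ with $c_J,c_\delta\in\Q_+$ and $\sum_J c_J=1$. The key observation is that each vertex $\hat{b}_J(2\rho)=\rho+w_0^J(\rho)$ is precisely a PRV weight: taking $w=e$, $v=w_0^J$ in Theorem \ref{PRV} gives $V(\rho+w_0^J(\rho))\subset V(\rho)\otimes V(\rho)$, so $(\rho,\rho;\hat{b}_J(2\rho))$ lies in the tensor semigroup. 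Clearing denominators produces $N\lambda+c'_\delta\delta=\sum_{J\subsetneq I}c'_J\hat{b}_J(2\rho)$ with $c'_J,c'_\delta\in\Z_+$ and $\sum_J c'_J=N$; since the tensor semigroup is closed under addition, summing these $N$ PRV triples gives $(N\rho,N\rho;\sum_J c'_J\hat{b}_J(2\rho))$ in the semigroup, i.e.
$$
V(N\lambda+c'_\delta\delta)\subset V(N\rho)\otimes V(N\rho).
$$

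It then remains to descend the $\delta$-string from $N\lambda+c'_\delta\delta$ down to $N\lambda$, and this is the main obstacle. Unlike the weights of a single irreducible module, which form an unbroken $\delta$-string by Lemma \ref{unbroken}, the \emph{components} of a tensor product need not, so one cannot simply subtract copies of $\delta$; this is exactly where the GKO input is essential. Because every component $V(\mu)\subset V(N\rho)\otimes V(N\rho)$ satisfies $\mu\leq 2N\rho$ and $\delta$ has strictly positive coefficients on all simple roots, only finitely many of the $V(N\lambda+(c'_\delta+j)\delta)$ can occur; choosing $j\geq 0$ maximal yields a $\delta$-maximal component $V(\xi)$ with $\xi=N\lambda+(c'_\delta+j)\delta$. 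Corollary \ref{2MrhoMaximals}, applied with $M=N$, then gives $V(\xi-k\delta)\subset V(N\rho)\otimes V(N\rho)$ for all $k\geq 0$; taking $k=c'_\delta+j$ produces $V(N\lambda)\subset V(N\rho)\otimes V(N\rho)$, and hence $(\rho,\rho;\lambda)\in\Gamma(\mf[g])$.

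The only genuinely delicate point is this $\delta$-descent, which rests entirely on Corollary \ref{2MrhoMaximals} (and through it on the positivity of the $L_0^{GKO}$-eigenvalue in Proposition \ref{L0weight}); everything else is bookkeeping with the vertex decomposition and the additivity of the tensor semigroup. In writing this up I would verify the two routine consistency checks, namely that the PRV vertices $\rho+w_0^J(\rho)$ are genuinely integral dominant weights and that the levels match throughout (all weights involved have level $2Nh^\vee$, with $\delta$ contributing level zero), but neither presents any real difficulty.
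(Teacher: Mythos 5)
Your proposal is correct and follows essentially the same route as the paper: decompose $\lambda$ via the vertices $\hat{b}_J(2\rho)=\rho+w_0^J(\rho)$ of $D_{2\rho}$ plus $\Q_+(-\delta)$, realize each vertex as a PRV component, use additivity of the tensor semigroup after clearing denominators, descend the $\delta$-string via Corollary \ref{2MrhoMaximals}, and invoke the definition of a saturation factor. Your write-up is in fact slightly more careful than the paper's at the $\delta$-maximality step, where you justify the existence of a maximal $j$ (the paper only says ``by replacing $\Lambda$ with the corresponding $\delta$-maximal component''), but the argument is the same.
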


\begin{proof}
As in the proof of   Proposition \ref{CKMprop}, write 
$$
\lambda=\sum_{J \subsetneq I} c_J \hat{b}_J(2\rho) + c_\delta(-\delta),\,\,\,\text{for $c_J, c_\delta\in \Q_+$}.
$$
Again clearing denominators, we write (for some integer $N\geq 1$)
$$
N \lambda = \sum_{J \subsetneq I} c'_J \hat{b}_J(2\rho) + c'_\delta(-\delta)
$$
with each $c'_J, c'_\delta \in \Z_+$ and $\sum_J c'_J=N$. But since $c'_J \hat{b}_J(2\rho)=c'_J\rho+w_0^J(c'_J\rho)$ (cf. Corollary \ref{2rhovert}), via the PRV components of Theorem \ref{PRV} we get that $V(c'_J\hat{b}_J(2\rho)) \subset V(c'_J \rho) \otimes V(c'_J \rho)$ for all $J$. (Observe that $\rho+w_0^J(\rho)$ is a dominant weight.)   Thus by additivity in the tensor semigroup, we get 
$$
V(\Lambda) \subset V(N\rho) \otimes V(N\rho),
$$
where $\Lambda:=\sum_{J \subsetneq I} c'_J \hat{b}_J(2\rho)$. Finally, by  Corollary \ref{2MrhoMaximals}, we get (by replacing $\Lambda$ with the corresponding $\delta$-maximal component) that 
$$
V(N\lambda)=V(\Lambda-c'_\delta(\delta)) \subset V(N \rho) \otimes V(N\rho).
$$
Thus by the existence of a saturation factor for the affine tensor cone, we have that $V(d\lambda) \subset V(d\rho) \otimes V(d\rho)$ for any saturation factor $d$. 
\end{proof}

\begin{remark} Assume that  $\lambda \in \dom$,  $\lambda \leq 2\rho$ is such that 
 $$V(d\lambda) \subset V(d\rho) \otimes V(d\rho),\,\,\,\text{ for some  $d\in \Z_{>0}$}. $$
Then, for any $k\in \Z$ and  $\lambda' \in \dom$ such that   $\lambda' \leq 2(\rho+k\delta)$,
 $$V(d\lambda') \subset V(d(\rho+k\delta)) \otimes V(d(\rho+k\delta)).$$
To prove this, observe that for any $ \mu \in \dom$ and $k\in \Z$,
$$V(\mu+k\delta) \simeq V(\mu)\otimes V(k\delta).$$
\end{remark}

\section{The case $\mf[g]=A_n^{(1)}$ and conjecture for other types}

In this section, we restrict our attention to the case when $\mf[g]=A_n^{(1)}=\widehat{sl}_{n+1}$. By Theorem \ref{maintheorem}, we know that for any saturation factor $d$ for $\mf[g]$ we have that $V(d\lambda) \subset V(d\rho) \otimes V(d\rho)$ for all dominant $\lambda \leq 2\rho$. However, unlike in the corresponding finite-dimensional simple Lie algebra $\mathring{\mf[g]}=sl_{n+1}$, the saturation factor $d$ \textit{cannot} be taken to be $d=1$. It is known (\cite{Res}*{Theorem 3}) that in type $A_n^{(1)}$, any $d \geq 2$ is a saturation factor.

While the method of proof of Theorem \ref{maintheorem} applied to a finite-dimensional semisimple Lie algebra gives a new proof of Theorem \ref{finite}, we will assume the validity of  Theorem \ref{finite}  and use it to prove the corresponding result for $A_n^{(1)}$. By Corollary \ref{2MrhoMaximals}, in order to prove the following Theorem \ref{maintheorem2} it suffices to consider the case when $\lambda \in \mc[P]^+_{max}(2\rho)$. Since for $A_n^{(1)}$ we have that $\delta=\alpha_0+\alpha_1+\cdots+\alpha_n$, by Proposition \ref{maxsupport}, we know that any $\delta$-maximal dominant weight $\lambda \leq 2\rho$ satisfies 
$$
\op{supp}(2\rho-\lambda) \subsetneq I =\{0, 1, \dots, n\},
$$
where, for any $\gamma \in \mc[Q]^+$, we define the support of $\gamma$ as $\op{supp}(\gamma) = \{ i \in I: c_i \neq 0 \ \text{where } \gamma=\sum_{i \in I} c_i \alpha_i\}$.
We first give the following lemma, for certain such $\lambda$, following a similar approach as in \cite{JK}*{Proof of Proposition 6.4}.

\begin{lemma} \label{finitesupport}

Let $\lambda \in \mc[P]^+_{max}(2\rho)$ such that $2\rho-\lambda \in \mathring{\mc[Q]}^+$. Then $V(\lambda) \subset V(\rho) \otimes V(\rho)$. 
\end{lemma}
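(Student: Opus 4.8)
The plan is to reduce the statement to the finite-dimensional tensor decomposition for $sl_{n+1}$ (Theorem \ref{finite} with saturation factor $1$) and then lift the resulting component to the affine setting by a degree argument, realizing everything inside the top graded piece of $V(\rho) \otimes V(\rho)$ with respect to the grading by $\langle \cdot, d\rangle$.

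First I would record the decomposition of $\lambda$ into its finite and central parts. Since $\lambda \in \mc[P]^+_{max}(2\rho)$ with $2\rho - \lambda \in \mathring{\mc[Q]}^+$, write $\gamma := 2\rho - \lambda = \sum_{i=1}^n c_i \alpha_i$ with $c_i \in \Z_+$; in particular $\gamma$ involves none of $\alpha_0$, has level $0$, and lies in $\mathring{\mf[h]}^\ast_\Q$. Using $\rho = \mathring{\rho} + h^\vee \Lambda_0$ (which has vanishing $\delta$-component since $\langle \rho, d\rangle = 0$), this gives $\lambda = \mathring{\lambda} + 2h^\vee \Lambda_0$ with finite part $\mathring{\lambda} := 2\mathring{\rho} - \gamma$. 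Because $\langle \lambda, \alpha_i^\vee \rangle = \langle \mathring{\lambda}, \alpha_i^\vee \rangle \geq 0$ for $1 \leq i \leq n$, the weight $\mathring{\lambda}$ is a dominant integral weight for $\mathring{\mf[g]} = sl_{n+1}$, and $\mathring{\lambda} \leq 2\mathring{\rho}$ in the finite dominance order, since $2\mathring{\rho} - \mathring{\lambda} = \gamma \in \mathring{\mc[Q]}^+$. Applying Theorem \ref{finite} to $sl_{n+1}$ (whose saturation factor is $1$), I conclude $V(\mathring{\lambda}) \subset V(\mathring{\rho}) \otimes V(\mathring{\rho})$ as $\mathring{\mf[g]}$-modules, and I fix a $\mathring{\mf[g]}$-highest weight vector $v$ of weight $\mathring{\lambda}$ in this finite tensor product.

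The lifting step is the crux. Recall (\cite{Kac}*{Chapter 12}) that every weight $\mu$ of $V(\rho)$ satisfies $\rho - \mu \in \mc[Q]^+$, so $\langle \mu, d\rangle \leq 0$ with equality exactly on the top graded piece; since $f_1, \dots, f_n$ preserve the $d$-degree while $f_0$ strictly lowers it (as $\alpha_0 = \delta - \theta$ is the unique simple root of positive $d$-degree), the operators of $U(\mathring{\mf[g]})$ applied to the highest weight vector both stay in and fill out this top piece, identifying it as the finite irreducible module $V(\mathring{\rho})$. Consequently the top graded piece of $V(\rho) \otimes V(\rho)$ is $V(\mathring{\rho}) \otimes V(\mathring{\rho})$ as a $\mathring{\mf[g]}$-module, and the vector $v$ above sits in $d$-degree $0$ with affine weight $\mathring{\lambda} + 2h^\vee \Lambda_0 = \lambda$. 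I claim $v$ is an affine highest weight vector: it is annihilated by $e_1, \dots, e_n$ as a $\mathring{\mf[g]}$-highest weight vector, and it is annihilated by $e_0$ because $e_0$ (acting through the coproduct as $e_0 \otimes 1 + 1 \otimes e_0$) strictly raises the $d$-degree, while $v$ already lies in the maximal degree $0$ of the tensor product. Since $V(\rho) \otimes V(\rho)$ is a completely reducible integrable module, the submodule generated by $v$ contains a copy of $V(\lambda)$, giving $V(\lambda) \subset V(\rho) \otimes V(\rho)$ as desired.

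The main obstacle is the precise identification of the top graded piece of $V(\rho)$ with the finite irreducible $V(\mathring{\rho})$ and, relatedly, the verification that a $\mathring{\mf[g]}$-highest weight vector in top degree is automatically an affine highest weight vector. Both rest on the grading of $V(\rho)$ by $\langle \cdot, d\rangle$ and on $\alpha_0$ being the unique simple root of positive $d$-degree; I would make these precise using Proposition \ref{maxsupport} together with the standard structure theory of integrable highest weight modules. Everything else is bookkeeping in the decomposition $\mf[h]^\ast_\Q = \mathring{\mf[h]}^\ast_\Q \oplus \Q\Lambda_0 \oplus \Q\delta$, and a check that the levels and finite parts match so that the affine weight of $v$ is exactly $\lambda$.
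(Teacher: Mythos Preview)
Your proof is correct and follows essentially the same approach as the paper: both reduce to Theorem~\ref{finite} for $sl_{n+1}$ to obtain a $\mathring{\mf[g]}$-highest weight vector inside the top $d$-graded piece of $V(\rho)\otimes V(\rho)$, and then use the observation that $e_0$ strictly raises $\langle\,\cdot\,,d\rangle$ (while all weights have $\langle\,\cdot\,,d\rangle\leq 0$) to conclude that this vector is an affine highest weight vector. The paper's argument is only more terse, omitting the explicit decomposition $\rho=\mathring{\rho}+h^\vee\Lambda_0$ and the identification of the top graded piece with $V(\mathring{\rho})$ that you spell out.
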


\begin{proof}
Let $\mathring{V}(\rho)$ be the $\mathring{\mf[g]}$-submodule of $V(\rho)$ generated by the highest weight vector $v_\rho \in V(\rho)$, and similarly for $\mathring{V}(\lambda)$. Then by Theorem \ref{finite}, we know that 
$$
\mathring{V}(\lambda) \subset \mathring{V}(\rho) \otimes \mathring{V}(\rho).
$$
Let $v_\lambda \in V(\rho) \otimes V(\rho)$ be a corresponding highest weight vector for $\mathring{\mf[g]}$ that generates $\mathring{V}(\lambda)$. We claim that this is in fact a highest weight vector for $\mf[g]$. All that remains to check is that $e_0.v_\lambda=0$, where $e_0$ is a root vector corresponding to the positive root $\alpha_0$. But by convention, we have that $\rho(d)=0$, so that no vector in $V(\rho) \otimes V(\rho)$ can have weight $\nu$ with $\nu(d) \geq 1$. And, since $2\rho-\lambda \in \mathring{\mc[Q]}^+$, we know that $\lambda(d)=0$. Then if $e_0.v_\lambda \neq 0$, we have that $e_0.v_\lambda$ has weight satisfying $(\lambda+\alpha_0)(d)=\alpha_0(d)=1$, a contradiction. Thus $e_0.v_\lambda=0$, so that $V(\lambda) \subset V(\rho) \otimes V(\rho)$. 
\end{proof}

By the symmetry of the Dynkin diagram of type $A_n^{(1)}$, Lemma \ref{finitesupport} actually suffices to handle the case of all $\delta$-maximal weights $\lambda \in \mc[P]_{max}^+(2\rho)$. Together with Corollary \ref{2MrhoMaximals} for $M=1$, we are able to prove the following theorem.

\begin{theorem} \label{maintheorem2}
Let $\mf[g]=A_n^{(1)}$, and let $\lambda \in \dom$ with $\lambda \leq 2\rho$. Then $V(\lambda) \subset V(\rho) \otimes V(\rho)$. 
\end{theorem}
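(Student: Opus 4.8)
The plan is to reduce the general case $\lambda \leq 2\rho$ to the $\delta$-maximal dominant weights, and then to reduce those further to the case already handled by Lemma \ref{finitesupport} using the diagram symmetry of $A_n^{(1)}$. First I would invoke Corollary \ref{2MrhoMaximals} with $M=1$: if $V(\nu) \subset V(\rho) \otimes V(\rho)$ is $\delta$-maximal, then $V(\nu - k\delta) \subset V(\rho)\otimes V(\rho)$ for all $k \geq 0$. Since any dominant $\lambda \leq 2\rho$ can be written as $\lambda = \nu - k\delta$ where $\nu := \lambda + k\delta \in \mc[P]^+_{max}(2\rho)$ is the unique $\delta$-maximal dominant weight in its $\delta$-string (with $k \geq 0$), it suffices to prove $V(\nu) \subset V(\rho)\otimes V(\rho)$ for every $\nu \in \mc[P]^+_{max}(2\rho)$.

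So the core of the argument is to establish the containment for an arbitrary $\delta$-maximal dominant weight $\nu \leq 2\rho$. By Proposition \ref{maxsupport}, together with $\delta = \alpha_0 + \alpha_1 + \cdots + \alpha_n$ in type $A_n^{(1)}$, such a $\nu$ satisfies $\op{supp}(2\rho - \nu) \subsetneq I$; that is, there is at least one index $i_0 \in I$ with $\alpha_{i_0}$ not appearing in $2\rho - \nu$. Lemma \ref{finitesupport} already covers the special case $i_0 = 0$, i.e. when $2\rho - \nu \in \mathring{\mc[Q]}^+$ (the support avoids the affine node). The key step is then to exploit the $\Z/(n+1)$ rotational symmetry of the $A_n^{(1)}$ Dynkin diagram: the diagram automorphism cycling the nodes $0 \mapsto 1 \mapsto \cdots \mapsto n \mapsto 0$ induces an automorphism $\sigma$ of $\mf[g]$ permuting the simple roots, hence fixing $\rho = \sum_i \Lambda_i$ and $2\rho$, and carrying $V(\rho)\otimes V(\rho)$ to itself as a $\mf[g]$-module (up to relabeling). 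Applying a suitable power $\sigma^{r}$ that sends the omitted node $i_0$ to the node $0$, I would transport the given weight $\nu$ to a weight $\sigma^r(\nu) \in \mc[P]^+_{max}(2\rho)$ whose defect $2\rho - \sigma^r(\nu)$ now has support avoiding $\alpha_0$, i.e. lies in $\mathring{\mc[Q]}^+$.

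At that point Lemma \ref{finitesupport} applies directly to $\sigma^r(\nu)$, giving $V(\sigma^r(\nu)) \subset V(\rho)\otimes V(\rho)$. Since $\sigma$ is an automorphism of $\mf[g]$ fixing $\rho$, applying $\sigma^{-r}$ to this containment yields $V(\nu) \subset V(\rho) \otimes V(\rho)$, as desired. Combining with the reduction of the first paragraph, every dominant $\lambda \leq 2\rho$ gives $V(\lambda) \subset V(\rho) \otimes V(\rho)$, completing the proof.

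The step I expect to be the main obstacle is making the diagram-symmetry argument rigorous at the level of $\mf[g]$-modules rather than just on weights. One must check that the diagram automorphism $\sigma$ genuinely lifts to a Lie algebra automorphism of the affine algebra $\mf[g]$ that fixes the central element $K$ and the Weyl vector $\rho$ (so that levels and the relevant highest weights are preserved), and that pulling back an irreducible highest weight module along $\sigma$ sends $V(\mu)$ to $V(\sigma(\mu))$ and respects tensor products, so that a component of $V(\rho)\otimes V(\rho)$ is carried to a component of $V(\sigma(\rho))\otimes V(\sigma(\rho)) = V(\rho)\otimes V(\rho)$. Once this functoriality is in place the symmetry reduction is clean; the subtlety is purely in verifying that $\sigma$ acts compatibly with all the structure (in particular preserving $\dom$, $\delta$, and $\delta$-maximality) so that transporting $\nu$ does not leave the class of weights to which Lemma \ref{finitesupport} can be applied.
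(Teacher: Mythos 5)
Your proposal is correct and follows essentially the same route as the paper: reduce to $\delta$-maximal dominant weights via Corollary \ref{2MrhoMaximals}, use Proposition \ref{maxsupport} to see that the support of $2\rho-\nu$ omits some node, rotate that node to the affine node $0$ by a diagram automorphism of $A_n^{(1)}$ (which fixes $\rho$ and hence preserves $V(\rho)\otimes V(\rho)$), apply Lemma \ref{finitesupport}, and pull back by $\sigma^{-1}$. The only difference is order of presentation (the paper treats the $\delta$-maximal case first and invokes Corollary \ref{2MrhoMaximals} at the end), which is immaterial.
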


\begin{proof}
First, let $\lambda \in \mc[P]^+_{\max}(2\rho)$ be a $\delta$-maximal weight. We know that $\op{supp}(2\rho-\lambda) \subsetneq I$. Consider any diagram automorphism $\sigma$ of the Dynkin diagram of $A_n^{(1)}$ such that $\op{supp}(\sigma(2\rho-\lambda)) \subset I \backslash \{0\}$. Then $\sigma$ induces an automorphism of $\mf[g]$ and also of $V(\rho)$ and $V(\rho) \otimes V(\rho)$ (since $\rho$ is stable under the diagram automorphism). If $\lambda':=\sigma(\lambda) \in \dom$ is the corresponding  dominant integral  weight, we thus have $\lambda' \leq 2\rho$ and $\op{supp}(2\rho-\lambda') \subset I \backslash \{0\}$, so by  Lemma \ref{finitesupport} we get that $V(\lambda') \subset V(\rho) \otimes V(\rho)$. Applying $\sigma^{-1}$, we get that $V(\lambda) \subset V(\rho) \otimes V(\rho)$. Now, the theorem follows by  applying  Corollary \ref{2MrhoMaximals}.
\end{proof}

For $\mf[g]$ of other types, we do not have sufficient diagram automorphisms to reduce to those $\lambda$ with $2\rho -\lambda \in \mathring{\mc[Q]}^+$. Even if this were the case, as Conjecture \ref{Kostant} is only known to hold  in types $A_n$ and the exceptional types, the proof method used for Theorem \ref{maintheorem2} to leverage the result for $\mathring{\mf[g]}$ is not easily adaptable.

However, direct computer-verified computations via the software Sage \cite{TSD}  show that Theorem \ref{maintheorem2} extends for low rank examples like $\mf[g]=B^{(1)}_2$, $G^{(1)}_2$, $B_3^{(1)}$, and $D^{(1)}_4$. Thus, we conclude by proposing the following analogue of Kostant's conjecture for affine Kac--Moody Lie algebras.

\begin{conjecture} \label{affineconjecture} 
Let $\mf[g]$ be any untwisted affine Kac--Moody Lie algebra, and $\lambda \in \dom$ such that $\lambda \leq 2\rho$. Then $V(\lambda) \subset V(\rho) \otimes V(\rho)$.
\end{conjecture}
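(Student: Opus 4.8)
The plan is to follow the architecture of the proof of Theorem \ref{maintheorem2}, reducing first to $\delta$-maximal dominant weights and then constructing the corresponding components without a saturation factor. By Corollary \ref{2MrhoMaximals} applied with $M=1$, it suffices to prove that every $\delta$-maximal dominant weight $\lambda \in \mc[P]^+_{max}(2\rho)$ actually occurs as a component $V(\lambda) \subset V(\rho) \otimes V(\rho)$. Indeed, such a $\lambda$ is then automatically a $\delta$-maximal \emph{component}, since $\lambda + \delta$ is not a weight of $V(\rho) \otimes V(\rho)$, and Corollary \ref{2MrhoMaximals} produces $V(\lambda - k\delta) \subset V(\rho) \otimes V(\rho)$ for all $k \geq 0$; as every dominant $\lambda' \leq 2\rho$ lies on such a $\delta$-string by Lemma \ref{unbroken}, this yields the full conjecture. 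Thus the entire content is concentrated on the set $\mc[P]^+_{max}(2\rho)$.

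Second, I would dispose of the ``finite-support'' maximal weights, namely those with $0 \notin \op{supp}(2\rho - \lambda)$, equivalently $2\rho - \lambda \in \mathring{\mc[Q]}^+$ and $\langle \lambda, d \rangle = 0$. For these the argument of Lemma \ref{finitesupport} applies essentially verbatim: letting $\mathring{V}(\rho)$ be the $\mathring{\mf[g]}$-submodule of $V(\rho)$ generated by the highest weight vector, the exact (saturation-free) finite Kostant Conjecture \ref{Kostant} for $\mathring{\mf[g]}$ yields $\mathring{V}(\lambda) \subset \mathring{V}(\rho) \otimes \mathring{V}(\rho)$, and the corresponding $\mathring{\mf[g]}$-highest weight vector of weight $\lambda$ is automatically a $\mf[g]$-highest weight vector, since $\alpha_0$ is the only simple root raising the $d$-degree and $\lambda$ already sits in top $d$-degree. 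The sole input is thus Conjecture \ref{Kostant} for $\mathring{\mf[g]}$, known in types $A$ and the exceptional types; this settles the finite-support maximal weights whenever $\mathring{\mf[g]}$ is of such a type, and --- after using the diagram automorphisms of the affine cycle as in Theorem \ref{maintheorem2} --- all maximal weights in type $A_n^{(1)}$.

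The genuine obstacle is the complementary family of $\delta$-maximal weights with $0 \in \op{supp}(2\rho - \lambda)$; these occur in every type other than $A_n^{(1)}$, since the criterion of Proposition \ref{maxsupport} only demands a single index with $c_i < a_i$ while some mark $a_i$ exceeds $1$. For such $\lambda$ one has $\langle \lambda, d\rangle < 0$, so the $d$-grading no longer forces $e_0 v_\lambda = 0$; moreover, since $\alpha_0$ is the unique simple root interacting with the $d$-grading, no relabelling by a finite subalgebra deleting a node $j \neq 0$ reproduces the mechanism of Lemma \ref{finitesupport}. I would attack these components directly, either (i) through the affine Littelmann path (or crystal) model, reducing the problem to the purely combinatorial assertion that each such $\lambda$ is the weight of a highest-weight element of $B(\rho) \otimes B(\rho)$, or (ii) by an inductive construction along $\delta$-strings, starting from an already-established lower component and exhibiting an explicit intertwiner raising it to weight $\lambda$, with the strict positivity of the $L^{GKO}_0$-eigenvalue from Proposition \ref{L0weight} ensuring that the resulting $\delta$-maximal component then propagates downward. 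Carrying out either route uniformly over all full-support maximal $\lambda$ and all affine types, and crucially without the saturation factor on which Theorem \ref{maintheorem} relies, is the main difficulty; it is exactly this gap that keeps Conjecture \ref{affineconjecture} open beyond the computer-verified low-rank cases $B^{(1)}_2$, $G^{(1)}_2$, $B_3^{(1)}$, and $D^{(1)}_4$.
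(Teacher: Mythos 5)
The statement you were given is Conjecture \ref{affineconjecture}: the paper does \emph{not} prove it, and neither does your proposal, as you yourself concede in its final sentence. So there is no hidden paper proof to compare against; what the paper actually establishes is the conjecture for $\mf[g]=A_n^{(1)}$ (Theorem \ref{maintheorem2}), the general statement up to a saturation factor (Theorem \ref{maintheorem}), and low-rank computer verifications, together with an explicit admission that the method of Lemma \ref{finitesupport} does not extend. Within that scope, your reconstruction is accurate and follows the paper's architecture: Corollary \ref{2MrhoMaximals} with $M=1$ reduces everything to $\lambda \in \mc[P]^+_{max}(2\rho)$ (and your justification that such a component is automatically $\delta$-maximal \emph{as a component}, since every weight of $V(\rho)\otimes V(\rho)$ lies in $\mc[P](2\rho)$, is the right one); the $d$-grading argument of Lemma \ref{finitesupport} handles the weights with $2\rho-\lambda \in \mathring{\mc[Q]}^+$, granted the finite Kostant statement for $\mathring{\mf[g]}$ (note the paper invokes Theorem \ref{finite} for $sl_{n+1}$, not the unproven Conjecture \ref{Kostant}, which is exactly why its unconditional result is confined to type $A$); and the cycle automorphisms finish type $A_n^{(1)}$. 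Your diagnosis of the obstruction --- full-support $\delta$-maximal weights, which exist precisely because some mark $a_i$ exceeds $1$ outside type $A$, and for which $\langle \lambda, d\rangle <0$ destroys the grading trick --- coincides with the paper's own discussion following Theorem \ref{maintheorem2}.

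The genuine gap is therefore exactly where you place it, but be aware that your two suggested routes do not narrow it. Route (ii) is not viable as stated: the GKO machinery of Proposition \ref{GKOMaximal} only propagates components \emph{downward} along a $\delta$-string from an already existing $\delta$-maximal component, and the paper explicitly warns that it ``does not give a way to construct the $\delta$-maximal components themselves''; there is no intertwiner in this formalism that raises an established lower component up to weight $\lambda$, so the proposed induction has no base-independent step. Route (i) is a restatement of the problem in crystal/path language with no argument attached. In summary: your proposal correctly assembles all the partial results the paper actually proves and correctly locates what remains open, but, like the paper, it leaves Conjecture \ref{affineconjecture} unproven outside type $A_n^{(1)}$.
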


\section*{Conflict of Interest}
The authors declare that they have no conflict of interest.


\begin{bibdiv}
\begin{biblist}


\bib{BS}{article}{
	AUTHOR={Berenstein, A.}
	AUTHOR={Sjamaar, R.}
	TITLE={Coadjoint orbits, moment polytopes, and the Hilbert--Mumford criterion}
	JOURNAL={Journal of the American Mathematical Society}
	VOLUME={13}
	YEAR={2000}
}

\bib{BZ}{article}{
	AUTHOR={Berenstein, A.}
	AUTHOR={Zelevinsky, A.}
	TITLE={Triple multiplicities for sl(r+1) and the spectrum of the exterior algebra of the adjoint representation}
	JOURNAL={J. Algebraic Combinatorics}
	VOLUME={1}
	YEAR={1992}
}

\bib{BJK}{article}{
	AUTHOR={Besson, M.}
	AUTHOR={Jeralds, S.}
	AUTHOR={Kiers, J.}
	TITLE={Vertices of intersection polytopes and rays of generalized Kostka cones}
	JOURNAL={Journal of Lie Theory}
	VOLUME={31}
	ISSUE={4}
	YEAR={2021}
}

\bib{BK}{article}{
	AUTHOR={Brown, M.}
	AUTHOR={Kumar, S.}
	TITLE={A study of saturated tensor cone for symmetrizable Kac--Moody algebras}
	JOURNAL={Mathematische Annalen}
	VOLUME={360}
	YEAR={2014}
}

\bib{CKM}{article}{
	AUTHOR={Chirivi, R.}
	AUTHOR={Kumar, S.}
	AUTHOR={Maffei, A.}
	TITLE={Components of $V(\rho) \otimes V(\rho)$}
	JOURNAL={Transformation Groups}
	VOLUME={22}
	YEAR={2017}
}


\bib{JK}{article}{
	AUTHOR={Jeralds, S.}
	AUTHOR={Kumar, S.}
	TITLE={Root components for tensor product of affine Kac--Moody Lie algebra modules}
	JOURNAL={Representation Theory (An electronic journal of the American Mathematical Society)}
	VOLUME={26}
	YEAR={2022}
}

\bib{Kac}{book}{
	AUTHOR={Kac, V.}
	TITLE={Infinite dimensional Lie algebras}
	PUBLISHER={Cambridge University Press}
	YEAR={1990}
}

\bib{KRR}{book}{
	AUTHOR={Kac, V.}
	AUTHOR={Raina, A.}
	AUTHOR={Rozhkovskaya}
	TITLE={Bombay lectures on highest weight representations of infinite dimensional Lie algebras}
	SERIES={Advanced Series in Mathematical Physics}
	VOLUME={29}
	PUBLISHER={World Scientific Publishing Co. Pte. Ltd., Hackensack, NJ}
	YEAR={2013}
}

\bib{KW}{article}{
	AUTHOR={Kac, V.}
	AUTHOR={Wakimoto, M.}
	TITLE={Modular and conformal invariance constraints in representation theory of affine algebras}
	JOURNAL={Advances in Mathematics}
	VOLUME={70}
	YEAR={1988}
}


\bib{KM}{article}{
	AUTHOR={Kapovich, M.}
	AUTHOR={Millson, J. J.}
	TITLE={Structure of the tensor product semigroup}
	JOURNAL={Asian Journal of Mathematics}
	VOLUME={10}
	YEAR={2006}
}


\bib{Kos}{article}{
	AUTHOR={Kostant, B.}
	TITLE={Clifford algebra analogue of the Hopf--Koszul--Samelson theorem, the $\rho$-decomposition of $C(\mathfrak{g})= \End(V_\rho) \otimes C(P)$, and the $\mathfrak{g}$-module structure of $\bigwedge \mathfrak{g}$}
	JOURNAL={Adv. Math.}
	VOLUME={125}
	YEAR={1997}
}

\bib{KT}{article}{
	AUTHOR={Knutson, A.}
	AUTHOR={Tao, T.}
	TITLE={The honeycomb model of $GL_n(\mathbb{C})$ tensor products I: Proof of the saturation conjecture}
	JOURNAL={Journal of the American Mathematical Society}
	VOLUME={12}
	YEAR={1999}
}

\bib{Ku0}{article}{
	AUTHOR={Kumar, S.}
	TITLE={Proof of the Parthasarathy-RangaRao-Varadarajan conjecture}
	JOURNAL={Invent. Math.}
	VOLUME={93}
	YEAR={1988}
}

\bib{Ku1}{article}{
	AUTHOR={Kumar, S.}
	TITLE={Existence of certain components in the tensor product of two integrable highest weight modules for Kac--Moody algebras}
	JOURNAL={Infinite-dimensional Lie algebras and groups (Luminy-Marseille, 1988), Adv. Ser. Math. Phys.}
	VOLUME={7} 
	PUBLISHER={World Sci. Publ., Teaneck, NJ, 1989}
	YEAR={1989}
}

\bib{Ku3}{book}{
	AUTHOR={Kumar, S.}
	TITLE={Kac--Moody groups, their flag varieties, and representation theory}
	PUBLISHER={Birkh\"auser}
	SERIES={Progress in Mathematics}
	VOLUME={204}
	YEAR={2002}
}

\bib{Ku2}{article}{
	AUTHOR={Kumar, S.}
	TITLE={A survey of the additive eigenvalue problem (with appendix by M. Kapovich)}
	JOURNAL={Transformation Groups}
	VOLUME={19}
	YEAR={2014}
}

\bib{Mat}{article}{
	AUTHOR={Mathieu, O.}
	TITLE={Construction d'un grouupe de Kac--Moody et applications}
	JOURNAL={Compositio Math.}
	VOLUME={69}
	YEAR={1989}
}

\bib{NP}{article}{
	AUTHOR={Nadimpalli, S.}
	AUTHOR={Pattanayak, S.}
	TITLE={A note on branching of $V(\rho)$}
	JOURNAL={Journal of Algebra}
	VOLUME={594}
	YEAR={2022}
}

\bib{Res}{article}{
	AUTHOR={Ressayre, N.}
	TITLE={On the tensor semigroup of affine Kac--Moody Lie algebras}
	JOURNAL={J. Amer. Math. Soc.}
	VOLUME={35}
	YEAR={2022}
}

\bib{Sch}{book}{
	AUTHOR={Schrijver, A.}
	TITLE={Theory of linear and integer programming}
	SERIES={Wiley--Interscience Series in Discrete Mathematics and Optimization}
	PUBLISHER={John Wiley \& Sons}
	YEAR={1999}
}

\bib{TSD}{article}{
	AUTHOR={The Sage Developers}
	TITLE={SageMath, the Sage Mathematics Software System (Version 8.0)}
	YEAR={2017}
	NOTE={\url{http://www.sagemath.org}}
}

\end{biblist}
\end{bibdiv}

\end{document}